\providecommand{\theHALG@line}{\thealgorithm.\arabic{ALG@line}}
\renewcommand{\theHALG@line}{\thealgorithm.\arabic{ALG@line}}
\def\eqref#1{equation~\ref{#1}}
\def\Eqref#1{Equation~\ref{#1}}
\def\1{\bm{1}}
\def\eps{{\epsilon}}
\def\rvs{{\mathbf{s}}}
\def\rvt{{\mathbf{t}}}
\DeclareMathAlphabet{\mathsfit}{\encodingdefault}{\sfdefault}{m}{sl}
\SetMathAlphabet{\mathsfit}{bold}{\encodingdefault}{\sfdefault}{bx}{n}
\newcommand{\E}{\mathbb{E}}
\newcommand{\R}{\mathbb{R}}
\newcommand{\mask}{\mathbbm{m}}
\newcommand{\dd}{\text{d}}
\newcommand{\ttest}{\text{test}}
\newcommand{\bsl}{\backslash}
\DeclareMathOperator*{\argmin}{arg\,min}
\newcommand{\norm}[1]{\|{#1}\|} 
\newcommand{\lone}[1]{\norm{#1}_1} 
\newcommand{\ltwo}[1]{\norm{#1}_2} 
\newcommand{\linf}[1]{\norm{#1}_\infty} 
\newcommand{\abs}[1]{{| #1 |}}
\newcommand{\paren}[1]{{( #1 )}}
\newcommand{\brac}[1]{{[ #1 ]}}
\newcommand{\set}[1]{{\{ #1 \}}}
\newcommand{\<}{\langle}
\renewcommand{\>}{\rangle}
\newcommand{\absBig}[1]{{\Big| #1 \Big|}}
\newcommand{\parenBig}[1]{{\Big( #1 \Big)}}
\newcommand{\bracBig}[1]{{\Big[ #1 \Big]}}
\newcommand{\setBig}[1]{{\Big\{ #1 \Big\}}}
\renewcommand{\P}{\mathbb{P}}
\newcommand{\N}{\mathbb{N}}
\newcommand{\mc}[1]{\mathcal{#1}}
\newcommand{\dn}{\frac{1}{n}}
\newcommand{\sumn}{\sum_{i=1}^n}
\newtheorem{theorem}{Theorem}
\newtheorem{lemma}{Lemma}
\newtheorem{remark}{Remark}
\newtheorem{proposition}{Proposition}
\newtheorem{definition}{Definition}
\newtheorem{assumption}{Assumption}
\crefname{theorem}{Theorem}{Theorems}
\crefname{lemma}{Lemma}{Lemmas}
\crefname{remark}{Remark}{Remarks}
\crefname{corollary}{Corollary}{Corollaries}
\crefname{observation}{Observation}{Observations}
\crefname{proposition}{Proposition}{Propositions}
\crefname{definition}{Definition}{Definitions}
\crefname{claim}{Claim}{Claims}
\crefname{fact}{Fact}{Facts}
\crefname{assumption}{Assumption}{Assumptions}
\crefname{example}{Example}{Examples}
\crefname{conjecture}{Conjecture}{Conjectures}
\theoremstyle{plain}
\icmltitlerunning{Error Analysis of Discrete Flow with Generator Matching}
\begin{document}

\twocolumn[
  \icmltitle{Error Analysis of Discrete Flow with Generator Matching}



  \icmlsetsymbol{equal}{*}

  \begin{icmlauthorlist}
    \icmlauthor{Zhengyan Wan}{ecnu}
    \icmlauthor{Yidong Ouyang}{ucla}
    \icmlauthor{Qiang Yao}{ecnu}
    \icmlauthor{Liyan Xie}{umn}
    \icmlauthor{Fang Fang$^\dagger$}{ecnu}
    \icmlauthor{Hongyuan Zha$^\dagger$}{cuhksz}
    \icmlauthor{Guang Cheng}{ucla}
  \end{icmlauthorlist}

  \icmlaffiliation{ecnu}{School of Statistics, East China Normal University}
  \icmlaffiliation{ucla}{Department of Statistics and Data Science, University of California, Los Angeles}
  \icmlaffiliation{umn}{Department of Industrial and Systems Engineering, University of Minnesota}
  \icmlaffiliation{cuhksz}{School of Data Science, The Chinese University of Hong Kong, Shenzhen}

  \icmlcorrespondingauthor{Fang Fang}{ffang@sfs.ecnu.edu.cn}
  \icmlcorrespondingauthor{Hongyuan Zha}{zhahy@cuhk.edu.cn}

  \icmlkeywords{Machine Learning, ICML}

  \vskip 0.3in
]



\printAffiliationsAndNotice{}  

\begin{abstract}
Discrete flow models offer a powerful framework for learning distributions over discrete state spaces and have demonstrated superior performance compared to the discrete diffusion models. However, their convergence properties and error analysis remain largely unexplored. In this work, we develop a unified framework grounded in stochastic calculus theory to systematically investigate the theoretical properties of discrete flow models. Specifically, by leveraging a Girsanov-type theorem for the path measures of two continuous-time Markov chains (CTMCs), we present a comprehensive error analysis that accounts for both transition rate estimation error and early stopping error. In fact, the estimation error of transition rates has received little attention in existing works. Unlike discrete diffusion models, discrete flow incurs no initialization error caused by truncating the time horizon in the noising process. Building on generator matching and uniformization, we establish non-asymptotic error bounds for distribution estimation without the boundedness condition on oracle transition rates. Furthermore, we derive a faster rate of total variation convergence for the estimated distribution with the boundedness condition, yielding a nearly optimal rate in terms of sample size. Our results provide the first error analysis for discrete flow models. We also investigate model performance under different settings based on simulation results.
\end{abstract}

\section{Introduction}
Discrete diffusion models have achieved significant progress in large language models \citep{nie2025large,zhu2025llada,zhao2025d1,yang2025mmada}. By learning the time reversal of the noising process of a continuous-time Markov chain (CTMC), the models transform a simple distribution (e.g., uniform \citep{hoogeboom2021argmax,lou2023discrete} and masked \citep{ou2024your,shi2024simplified,sahoo2024simple}) that is easy to sample to the data distribution that has discrete structures. Discrete flow models \cite{campbell2024generative,gat2024discrete,shaul2024flow} provide a flexible framework for learning generating transition rate analogous to continuous flow matching \citep{albergo2022building,liu2022flow,lipman2022flow}.

Recent theoretical analysis for discrete diffusion models has emerged \citep{chen2024convergence,zhang2024convergence,liang2025absorb,liang2025discrete,pham2025discrete,ren2024discrete,ren2025fast}. To obtain the transition rate in the reversed process, the concrete scores in these analyses are obtained by minimizing the concrete score entropy introduced in \cite{lou2023discrete,benton2024denoising}. In those works, the distribution errors of discrete diffusion models are divided into three parts: (a) initialization error from truncating the time horizon in the noising process; (b) concrete score estimation error; (c) discretization error from sampling algorithms. In our paper, we aim to investigate the theoretical properties of the discrete flow-based models using the generator matching training objective \citep{holderrieth2024generator} and the uniformization sampling algorithm \citep{chen2024convergence}, which offers zero initialization error and discretization error. Our analysis takes transition rate estimation error into account instead of imposing a stringent condition on it in previous works, which is related to the early stopping parameter. We decompose the estimation error into stochastic error and approximation error, and then balance the stochastic error and early stopping error by choosing the early stopping parameter. Our stochastic error bound is aligned with the results in continuous flow \citep{gao2024convergence}; the early stopping error bound matches the recent result in discrete diffusion \citep{zhang2024convergence}. Furthermore, we present a comprehensive error analysis for the neural network class with the ReLU activation function, by controlling stochastic error, approximation error and early stopping error simultaneously. To the best of our knowledge, this is the first theoretical error analysis for discrete flow models. We additionally conduct simulation experiments to investigate empirical performance under various settings.

The main contributions in this paper are summarized as follows.
\begin{enumerate}
    \item  Parallel to the continuous flow matching framework, we establish a unified theoretical framework for discrete flow-based models via generator matching, which enables us to control the KL divergence between the path measures of two CTMCs naturally. In the sampling stage, we use the uniformization algorithm to sample in an exact way.
    \item We establish the non-asymptotic error bound for distribution error in discrete flow models, by taking estimation error into consideration, which is less explored in existing works. There are three sources of error in our framework: (a) stochastic error from estimation through empirical risk minimization; (b) approximation error of the selected function class; (c) early stopping error. We provide the total variation error bound for the estimated distribution without imposing conditions on the data distribution and the oracle transition rate. Additionally, under the boundedness condition of the oracle transition rate, we derive a faster convergence rate of the total variation, which is nearly optimal in terms of the sample size. 
\end{enumerate}

{ Due to space limitations, some additional results and all technical proofs are deferred to the Appendix.}


\textsc{Notation.} Let $\brac{N}=\set{1,2,\dots,N}$ for a positive integer $N$. We use $\dot{\kappa}_t$ to denote the time derivative of a function $\kappa_t$ of $t$. For a $\mc{D}$-dimensional vector $z$, let $z^d$ and $z^{\bsl d}$ denote the $d$-th element of the vector $z$ and the $(\mc{D}-1)$-dimensional vector $(z^1,\dots,z^{d-1},z^{d+1},\dots,z^\mc{D})^\top$. We use $\mathbbm{1}(\cdot)$ to denote an indicator function. We denote the Hamming distance of two vectors $z,x$ by $\dd^H(z,x)$. For two quantities $x,z$, define the Kronecker delta $\delta_x(z)$ satisfying $\delta_x(z)=1$ if $x=z$ and $\delta_x(z)=0$ if $x\neq z$. We say $a=\mc{O}(b)$ if $a\leq cb$ for a positive constant $c$. In this paper, some universal constants $C,c>0$ are allowed to vary from line to line.


\section{Theoretical Background on Discrete Flow-Based Models}
In this section, we introduce some theoretical background about continuous-time Markov chains \citep{norris1998markov} and discrete flow-based models \citep{campbell2024generative,gat2024discrete,shaul2024flow,wang2025fudoki}. 

\subsection{Continuous-Time Markov Chain as Stochastic Integral}
In this subsection, we consider the probability space $(\Omega,\mc{F},\P)$. We formally define continuous-time Markov chains (CTMCs) as follows.
\begin{definition}[CTMC]\label{def:CTMC}
Consider a $\mc{D}$-dimensional finite state space $\mc{S}^\mc{D}$, where $\mc{S}=\set{1,2,\dots,\abs{\mc{S}}}$. Let $u_t(z,x)_{z,x\in \mc{S}^\mc{D}}$ be a (time-dependent) rate matrix that satisfying: (a) $u_t(z,x)$ is continuous in $t$; (b) $u_t(z,x)\ge 0$ for any $z\neq x$; (c) $\sum_{z\in\mc{S}^\mc{D}}u_t(z,x)=0$ for any $x\in\mc{S}^\mc{D}$. We call a process $\set{X(t)}_{t\ge0}$ a continuous time Markov chain with the transition rate matrix $u_t(z,x)_{z,x\in \mc{S}^\mc{D}}$ and the natural filtration $\mc{F}_t=\sigma(\set{X(s):0\leq s\leq t})$ if it satisfies that for any $z,x\in\mc{S}^\mc{D}$
\begin{enumerate}

    \item the transition rate is the generator of CTMC:  $\P(X(t+h)=z|X(t)=x)=\delta_x(z)+u_t(z,x)h+o(h)$;
    \item it has Markov property: $\P(X(t+h)=z|\mc{F}_t)=\P(X(t+h)=z|X(t))$.
\end{enumerate}
\end{definition}
Given a rate matrix $u_t$ with uniformly bounded entries, the following uniformization technique allows us to construct a CTMC with $u_t$ through a Poisson process, which offers us a sampling algorithm without discretization error \citep[see, e.g., ][]{ren2024discrete,chen2024convergence}.
\begin{proposition}[Uniformization]\label{prop:uniformization}
    Assume that $u_t(z,x)_{z,x\in\mc{S}^\mc{D}}$ is a rate matrix satisfying that (a) $-u_t(x,x)\leq M$ for any $x\in\mc{S}^\mc{D}$; (b) $u_t(z,x)$ is $L$-Lipschitz continuous in $t$ for any pair $(z,x)\in\mc{S}^\mc{D}\times\mc{S}^\mc{D}$. Suppose that $T_1,T_2,\dots$ are the arrival times of a Poisson process $N(t)$ with rate $M$. Let $X(t)$ be a jump process with initial distribution $p_0$ and natural filtration $\mc{F}_t$ such that at $t=T_1,T_2,\dots$, the process $X(t)$ jumps to position $z\neq X(t-)$ with probability $u_t(z,X(t-))/M$. Then $X(t)$ is a Markov process satisfying for $z\neq x$,
    \begin{align*}
        \P(X(t+h)=z|X(t)=x)=u_t(z,x)h+R_t,
    \end{align*}
    where $R_t\leq (M^2+L)h^2=O(h^2)$. Thus, $X(t)$ is a CTMC with the rate $u_t$.
\end{proposition}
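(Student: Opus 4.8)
The plan is to condition on the number of arrivals of the driving Poisson process $N$ in the window $[t,t+h]$ and expand each contribution in powers of $h$. Fix $X(t)=x$. Since $N$ has rate $M$ and independent increments, $N(t+h)-N(t)$ is Poisson with mean $Mh$ and independent of $\mc{F}_t$, so I would split $\P(X(t+h)=z\mid X(t)=x)$ (for $z\neq x$) into the no-arrival term, the exactly-one-arrival term, and the at-least-two-arrivals term. The no-arrival term is $0$ because without an arrival the path does not move; the at-least-two-arrivals term carries total probability $1-e^{-Mh}-Mhe^{-Mh}\le \tfrac12(Mh)^2$ and hence contributes at most $O(h^2)$.

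The heart of the argument is the single-arrival term. Conditionally on exactly one arrival (probability $Mhe^{-Mh}$), its time $\tau$ is uniform on $[t,t+h]$; since the process has not moved before $\tau$ it sits at $x$ at $\tau-$ and then jumps to $z\neq x$ with probability $u_\tau(z,x)/M$. Here I would invoke $-u_t(x,x)\le M$ (together with $\sum_{z}u_t(z,x)=0$, which forces $u_\tau(z,x)\le M$ for $z\neq x$) to see that this is a genuine sub-probability, the complementary mass corresponding to a null jump that leaves the path unchanged. Integrating over $\tau$, this term equals $e^{-Mh}\int_t^{t+h}u_s(z,x)\,\dd s$. Then I would use the $L$-Lipschitz continuity of $s\mapsto u_s(z,x)$ to replace $\int_t^{t+h}u_s(z,x)\,\dd s$ by $u_t(z,x)h$ up to an error $\tfrac12 Lh^2$, and use $e^{-Mh}=1+O(Mh)$ together with $\int_t^{t+h}u_s(z,x)\,\dd s\le Mh$ to strip the exponential at the cost of a further $O(M^2h^2)$ term. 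Collecting the three arrival-count ranges yields $\P(X(t+h)=z\mid X(t)=x)=u_t(z,x)h+R_t$ with $|R_t|$ bounded by $(M^2+L)h^2$ up to an absolute constant, i.e.\ the stated expansion (read with the factor $h$, as in \Cref{def:CTMC}).

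To conclude, I would observe that $X$ is a well-defined càdlàg jump process (a.s.\ finitely many arrivals on any bounded interval), that its Markov property follows from the independent increments of $N$ and the memorylessness of the thinning rule, and that the generator expansion just derived is exactly condition~1 of \Cref{def:CTMC}; hence $X(t)$ is a CTMC with rate $u_t$. The step I expect to be the main obstacle is the remainder bookkeeping around the single-arrival term: one must verify that the leading order $u_t(z,x)h$ is already produced there — which is precisely what forces the Lipschitz hypothesis, since the jump at $\tau$ is governed by $u_\tau$, not $u_t$ — and that each discarded contribution (the $\ge 2$-arrival mass, the $(1-e^{-Mh})$ correction, and the Lipschitz remainder) is genuinely $O(h^2)$ with constants that combine into $M^2+L$.
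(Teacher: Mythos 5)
Your proposal is correct and follows essentially the same route as the paper's proof: condition on the number of Poisson arrivals in $[t,t+h]$, compute the single-arrival contribution as $Mhe^{-Mh}\cdot\frac{1}{h}\int_t^{t+h}\frac{u_s(z,x)}{M}\,\dd s$ using the uniform arrival time, invoke the Lipschitz hypothesis to replace $u_s$ by $u_t$, and absorb the $\ge 2$-arrival mass and the exponential correction into the $(M^2+L)h^2$ remainder, with the Markov property coming from the independent increments of $N$. You also correctly read the displayed identity as $u_t(z,x)h+R_t$ (the factor $h$ is what the paper's own derivation produces), so there is nothing to add.
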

Here, the remainder $R_t$ is uniformly bounded for any $t$ and $z\neq x$ under the assumptions in \cref{prop:uniformization}, which is crucial for developing the theory of CTMC.

To rewrite a CTMC $X(t)$ as a stochastic integral, we define a random measure $N((t_1,t_2],A)$ associated with the CTMC $X(t)$.
\begin{definition}[Random Measure Associated with CTMC]\label{def:random measure}
    Suppose that $X(t)$ is a CTMC with rate $u_t$ and natural filtration $\mc{F}_t$. Define the random measure associated with $X(t)$ and $A\subseteq\mc{S}^\mc{D}$ as
    \begin{align*}
        \!\!N((t_1,t_2],A)=&~\#\set{t_1<s \leq t_2; \Delta X(s)\neq 0, X(s)\in A},
    \end{align*}
    where $\Delta X(t)=X(t)-X(t-)$.
\end{definition}

Suppose that $N((t_1,t_2],A)$ is the random measure associated with a CTMC $X(t)$, where $X(t)$ is constructed by uniformization with the Poisson process $N(t)$. Then $N(t,A)\overset{\triangle}{=}N((0,t],A)\leq N(t)<\infty$ a.s. for any $t\ge0$ and $A\subseteq\mc{S}^\mc{D}$ \citep[see Lemma 2.3.4 in][]{applebaum2009levy}. Thus, similar to the Poisson random measure associated with a Lévy process \citep[see Section 2.3.2 in][]{applebaum2009levy}, there are some equivalent representations:
\begin{align*}
    N((t_1,t_2],A)=\sum_{n\in\N}\delta_{(t_1,t_2]}(T_n^A),
\end{align*}
    where $\set{T_n^A}_{n\in\N}$ are arrival times of the counting process $N(t,A)$. Consequently, $X(t)$ can be written as the following stochastic integral:
    \begin{equation}\label{eq:CTMC representation}
        \begin{aligned}
        X(t)=&~X(0)+\sum_{n\in\N}\bracBig{X(T_n)-X(T_{n-1})}\delta_{[0,t]}(T_n)\\
    =&~X(0)+\int_{0}^t\int_{\mc{S}^\mc{D}}(z-X(s-))N(\dd s,\dd z),
    \end{aligned}
    \end{equation}
    where $\set{T_n}_{n\in\N}$ are arrival times of $N(t,\mc{S}^\mc{D})$. Here, we formally define the integrator $N(t,A)$ in the CTMC representation as \cref{def:random measure}, which is clearer and more formal than the related definition compared to Proposition 3.2 in \cite{ren2024discrete}.
\begin{remark}[Comparison to Lévy-Itô Decomposition]
    Recall that a Lévy process $X(t)$ has the following Lévy-Itô decomposition \citep[see Theorem 2.4.16 in][]{applebaum2009levy}:
    \begin{align*}
        X(t)={}&X(0)+bt+B(t)\\
        &+\int_{\abs{x}<1}x\tilde{N}(t,dx)+\int_{\abs{x}\ge1}xN(t,dx),
    \end{align*}
    where $B(t)$ is a Brownian motion, $N(t,A)$ is the Poisson random measure associated with the Lévy process $X(t)$, and $\tilde{N}(t,A)$ is the compensator of $N(t,A)$. Here, the second argument in $N(t,A)$ is the jump size of the associated Lévy process at time $t$. However, the second argument of the random measure associated with a CTMC in \cref{def:random measure} is the position after jump at time $t$. Both the integrands in \eqref{eq:CTMC representation} and that in Lévy-Itô decomposition are jump size. Moreover, since CTMCs do not have independent increments in general, the random measure in \cref{def:random measure} might not independently scattered; that is, $N(t,A_1)$ and $N(t,A_2)$ are not necessarily independent for disjoint $A_1,A_2\subseteq\mc{S}^\mc{D}$, which is not a Poisson random measure. There is no need to use the independently scattered property in our technical proofs.
\end{remark}
Given a CTMC $X(t)$ with transition rate $u_t$ and marginal densities $\set{p_t}_{t\ge0}$ w.r.t. the counting measure, the following proposition demonstrates that the marginal densities $p_t$ satisfy the following  Kolmogorov forward equation (a.k.a. continuity equation).
\begin{proposition}[Kolmogorov Forward Equation]\label{prop:Kolmogorov equation}
    The CTMC $X(t)$ satisfies the following equation:
    \begin{align*}
\dot{p}_t(x)=\underbrace{\sum_{z\neq x}u_t(x,z)p_t(z)}_{\text{Incoming Flux}}-\underbrace{\sum_{z\neq x}u_t(z,x)p_t(x)}_{\text{Outgoing Flux}}.
    \end{align*}
\end{proposition}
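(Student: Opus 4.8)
The plan is to derive the forward equation directly from the defining infinitesimal property of the CTMC in \cref{def:CTMC}, namely $\P(X(t+h)=x\mid X(t)=z) = \delta_z(x) + u_t(x,z)h + o(h)$. First I would write the marginal density at time $t+h$ by conditioning on the state at time $t$ and using the Markov property:
\begin{align*}
p_{t+h}(x) = \sum_{z\in\mc{S}^\mc{D}} \P(X(t+h)=x\mid X(t)=z)\, p_t(z) = \sum_{z\in\mc{S}^\mc{D}} \bigl(\delta_z(x) + u_t(x,z)h + o(h)\bigr) p_t(z).
\end{align*}
Separating the $z=x$ term (which contributes $p_t(x)$ from the Kronecker delta) from the rest gives $p_{t+h}(x) = p_t(x) + h\sum_{z\in\mc{S}^\mc{D}} u_t(x,z) p_t(z) + o(h)$, where the sum includes $z=x$ with the convention that $u_t(x,x) = -\sum_{z\neq x} u_t(z,x)$ from condition (c) of \cref{def:CTMC}. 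Subtracting $p_t(x)$, dividing by $h$, and letting $h\to 0$ yields $\dot p_t(x) = \sum_{z\in\mc{S}^\mc{D}} u_t(x,z) p_t(z)$.

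For the second equality, I would split off the diagonal term and substitute the column-sum-zero condition: $\sum_{z\in\mc{S}^\mc{D}} u_t(x,z) p_t(z) = \sum_{z\neq x} u_t(x,z) p_t(z) + u_t(x,x) p_t(x) = \sum_{z\neq x} u_t(x,z) p_t(z) - \bigl(\sum_{z\neq x} u_t(z,x)\bigr) p_t(x)$, which is exactly the incoming-flux-minus-outgoing-flux decomposition claimed in the statement. (The statement as printed has a typo — the free index on the right should be $x$, not $s$ — but the content is the stated balance equation.)

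The only genuine subtlety, rather than an obstacle, is justifying the interchange of limit and (finite) sum and controlling the $o(h)$ terms uniformly: since $\mc{S}^\mc{D}$ is finite and $u_t$ is continuous in $t$, the $o(h)$ remainder for each fixed pair $(x,z)$ can be summed over the finitely many $z$ to produce a single $o(h)$ term, so $p_{t+h}(x)$ is differentiable in $t$ with the stated derivative; alternatively one can invoke \cref{prop:uniformization} to get the uniform $O(h^2)$ control on the remainder directly, which makes the finiteness argument immediate. This is the step I would state carefully, though it is routine given the finite state space.
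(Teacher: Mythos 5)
Your proposal is correct and follows essentially the same route as the paper's proof: conditioning on the state at time $t$, expanding the transition probability via the generator property from \cref{def:CTMC}, and passing to the limit after dividing by $h$ (the finiteness of $\mc{S}^\mc{D}$ handles the $o(h)$ terms, a point you make more explicitly than the paper does). Your observation about the typo in the displayed indices and the diagonal-term substitution giving the flux decomposition are both accurate.
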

In our work, we say $u_t$ can generate the probability path $p_t$, if it satisfies the above Kolmogorov equation.

For better presentation, some additional results for CTMCs, including the compensator of the random measure and a Girsanov-type theorem, are deferred to \cref{sec:change of measure}.

\subsection{Discrete Flow-Based Models}
We aim to learn a transition rate $u_t$ of a CTMC $X(t)$ that can transport from a source distribution $p_0$ to a target data distribution $p_1$. To obtain such a transition rate for sampling, a natural method is to learn the conditional expectation of the conditional transition rate $u_t(z,x|x_1)$ that generates the conditional probability path $p_{t|1}(\cdot|x_1)$, since $u_t(z,x)=\E\brac{u_\rvt(z,x|X(1))|X(\rvt)=x,\rvt=t}$ can generate the target probability path $p_t$ \citep[see Proposition 3.1 of][]{campbell2024generative}, i.e., it satisfies the Kolmogorov forward equation, where $X(\rvt)\sim p_{\rvt|1}(\cdot|X(1))$ given $X(1)$ and $\rvt\sim\mc{U}([0,1])$. Therefore, we are free to define the conditional probability path and the conditional transition rate.

It is worth noting that in the sampling stage, considering a $\abs{\mc{S}}^\mc{D}$-dimensional vector-valued function $\paren{u_t(z,x)}_{z\in\mc{\mc{S}}^\mc{D}}$ of current time $t$ and state $x$ is intractable when $\mc{D}$ is relatively large. To handle such a high-dimensional scenario, a common approach is to construct a coordinate-wise conditional probability path and transition rate, that is,
\begin{equation}\label{eq:coordinate-wise conditional path}
    \begin{aligned}
   p_{t|1}(x|x_1)=&~\prod_{d=1}^{\mc{D}}p^d_{t|1}(x^d|x_1^d),\\
   \text{ and  }u_t(z,x|x_1)=&~\sum_{d=1}^\mc{D}\delta_{x^{\bsl d}}(z^{\bsl d})u_t^{d}(z^d,x^d|x_1^d),
\end{aligned}
\end{equation}
which means that the elements of the vector $X(t)$ are independent conditional on $X(1)$. Here, $u_t^{d}(z^d,x^d|x_1^d)$ is the conditional transition rate that generates the conditional probability path $p_{t|1}^d$. A popular choice of probability path and the associated conditional transition rate used in the previous works \citep{campbell2024generative,gat2024discrete} is
\begin{equation}\label{eq:mixture path}
    \begin{aligned}
    p_{t|1}^d(x^d|x_1^d)=&~(1-\kappa_t)p_0^d(x^d)+\kappa_t\delta_{x_1^d}(x^d) ;\\
    ~ u_t^{d}(z^d,x^d|x_1^d)=&~\frac{\dot{\kappa}_t}{1-\kappa_t}(\delta_{x_1^d}(z^d)-\delta_{x^d}(z^d)),
\end{aligned}
\end{equation}
where $\kappa_t:[0,1]\to[0,1]$ is a non-decreasing function satisfying $ \kappa_0=0$ and $\kappa_1=1$. Note that the conditional transition rate will blow up as $t\to1^-$. Thus, in the sampling stage, we employ the early stopping technique; that is, we only consider the time interval $[0,1-\tau]$ for a sufficiently small positive parameter $\tau$. We will discuss this time singularity in \cref{discuss:time singularity}.

After defining the conditional path and rate, the unconditional transition rate is given by
\begin{equation}\label{eq:joint transition rate}
\begin{aligned}
    u_t(z,x)=&~\sum_{d=1}^\mc{D}\delta_{x^{\bsl d}}(z^{\bsl d})\sum_{x_1^d}u_t^{d}(z^d,x^d|x_1^d)p^d_{1|t}(x_1^d|x)\\
    \overset{\triangle}{=}&~\sum_{d=1}^\mc{D}\delta_{x^{\bsl d}}(z^{\bsl d})u_t^{d}(z^d,x),
\end{aligned}
\end{equation}
where $p_{1|t}^d(x_1^d|x)=\sum_{x^{\bsl d}_1}p_{1|t}(x_1|x)$. Therefore, it suffices to consider a sparse rate matrix $u_t$ satisfying $u_t(z,x)=0$ for any $z,x\in\mc{S}^\mc{D}$ with Hamming distance $\dd^H(z,x)> 1$. 

In our work, we consider both masked and uniform source distributions, which are commonly used in practice \citep{campbell2024generative,wang2025fudoki,deng2025uniform}.

\noindent {\bf Training via Bregman divergence.}
Let $\tau\in(0,1/2)$ be an early stopping parameter. Suppose that we have i.i.d. samples $\set{\rvt_i,X_i(1)}_{i\in\brac{n}}$, and $X_i(\rvt_i)\sim p_{\rvt_i|1}(\cdot|X_i(1))$ for each $i\in\brac{n}$, where $\rvt_i$ samples from the uniform distribution $\mc{U}([0,1-\tau])$. Denote $\mathbbm{D}_n=\set{Z_i}_{i\in\brac{n}}=\set{(\rvt_i,X_i(\rvt_i),X_i(1))}_{i\in\brac{n}}$ and $v(x,Z_i)=u_{\rvt_i}(x,X_i(\rvt_i)|X_i(1))$, where $u_t(z,x|x_1)$ is a conditional transition rate that generates the conditional probability path $p_{t|1}(x|x_1)$. Denote the Bregman divergence with a convex function $F(\cdot)$ as $D_F$, which is defined as 
\begin{align*}
    D_F(a||b)=F(a)-F(b)-\<\nabla F(b),a-b\>.
\end{align*}
Define the following quantity:
{\begin{align*}
    &\mc{I}(u,z,\rvt,X(\rvt),X(1))\\
    \overset{\triangle}{=}&~-u_{\rvt}(z,X(\rvt)|X(1))\log u_{\rvt}(z,X(\rvt))+u_{\rvt}(z,X(\rvt)).
\end{align*}}

Inspired by generator matching \cite{holderrieth2024generator}, we consider the following rate estimator through empirical risk minimization (ERM) with a function class $\mathcal{G}_n$:
{\begin{equation}\label{eq:training objective}
    \begin{aligned}
    \hat{u}=&~\argmin_{u\in\mathcal{G}_n}\dn\sumn \sum_{z\neq X_i(\rvt_i)} D_F(v(z,Z_i)||u_{\rvt_i}(z,X_i(\rvt_i)))\\
    =&~\argmin_{u\in\mathcal{G}_n}\dn\sumn \sum_{z\neq X_i(\rvt_i)}\mc{I}(u,z,\rvt_i,X_i(\rvt_i),X_i(1))\\
    \overset{\triangle}{=}&~\argmin_{u\in\mathcal{G}_n}\mc{L}_n(u),
\end{aligned}
\end{equation}} 
where we take $F(x)=x\log x$ and the summation over $\set{z:z\neq X_i(\rvt_i)}$ has only $O(\mc{D}\abs{S})$ complexity due to the sparsity of the rate matrix. Here, the training objective is finite if we choose an appropriate function class (e.g. \eqref{eq:function class} in \cref{sec:main results}).

\noindent {\bf Sampling via uniformization.}
For accurate sampling without discretization error, following the uniformization argument introduced in \cref{prop:uniformization}, we consider the uniformization \cref{alg:sampling via uniformization} in the Appendix \citep[also see, e.g. Algorithm 1 in][]{chen2024convergence}. Here, the discretization used in \cref{alg:sampling via uniformization} (see \cref{sec:uniformization algorithm} in the Appendix) can reduce the average sampling steps compared to using a uniform bound $M\ge \sup_{t\in[0,1-\tau]}\sum_{z\neq x}\hat{u}_t(z,x)$ for a large time interval $[0,1-\tau]$.


\section{Main Results}\label{sec:main results}

In this section, we establish the error bound for discrete flow-based models. We will first introduce some additional notation. Then, we will present a result for bounding the KL divergence of the marginal distributions of two CTMCs. After that, we will derive the error bounds for stochastic error, approximation error and early stopping error without boundedness condition on the oracle rate. Finally, we establish a fast convergence rate for the distribution error under the boundedness condition.

\subsection{Additional Notations}

 We first introduce some additional notation. Similar to the training objective (\Eqref{eq:training objective}), we denote the oracle transition rate as
\begin{align*}
    u^0=&~\argmin_u\E\setBig{\sum_{z\neq X(\rvt)}\mc{I}(u,z,\rvt,X(\rvt),X(1))}\\
    \overset{\triangle}{=}&~\argmin_{u}\mc{L}(u).
\end{align*}
where $Z=(\rvt,X(\rvt),X(1))$ is a test point independent of the data $\mathbbm{D}_n$. Through marginalization trick \citep{lipman2024flow}, we have $u_t^0(z,x)=\E \brac{u_\rvt(z,x|X(1))|X(\rvt)=x,\rvt=t}$. We also define the best approximation in the function class $\mc{G}_n$ as
\begin{align*}
    u^*=&~\argmin_{u\in\mc{G}_n}\E\setBig{\sum_{z\neq X(\rvt)}\mc{I}(u,z,\rvt,X(\rvt),X(1))}\\
    \overset{\triangle}{=}&~\argmin_{u\in\mathcal{G}_n}\mc{L}(u).
\end{align*}
For notational simplicity, we define
\begin{align*}
   g(u,z,Z)\overset{\triangle}{=}&~-u_{\rvt}(z,X(\rvt)|X(1))\log \frac{u_{\rvt}(z,X(\rvt))}{u^0_{\rvt}(z,X(\rvt))}\\
   &~+u_{\rvt}(z,X(\rvt))-u^0_{\rvt}(z,X(\rvt)). 
\end{align*}
Then, the approximation error of $u^*$ is
{\begin{equation}\label{eq:approximation error}
\begin{aligned}
    &~\mc{L}(u^*)-\mc{L}(u^0)
    =\E\bracBig{\sum_{z\neq X(\rvt)}g(u^*,z,Z)}\\
    \quad&~=\inf_{u\in\mc{G}_n}\E \bracBig{\sum_{z\neq X(\rvt)}D_F\parenBig{u^0_\rvt(z,X(\rvt))||u_\rvt(z,X(\rvt))}},
\end{aligned}
\end{equation}}
where the third equation we use the marginalization trick.

Since the oracle transition rate can be written as $u^0_t(z,x)=\sum_{d=1}^\mc{D}\delta_{x^{\bsl d}}(z^{\bsl d})u^{0,d}_t(z^d,x)$, we consider the following matrix-valued function class:
\begin{equation}\label{eq:function class}
    \begin{aligned}
    \mc{G}_n=&~\Big\{\set{u_\cdot^d(s,\cdot)}_{d\in\mc{D},s\in\mc{S}}:\mc{S}^\mc{D}\times[0,1]\to\R^{\mc{D}\times\abs{\mc{S}}}:\\
    &~\quad\abs{u_t^d(z^d,x)}\ge m_n ,u_\cdot^d(s,\cdot)\in\mc{G}_n^\prime\Big\},
\end{aligned}
\end{equation}
where $\mc{G}_n^\prime$ is a class of scalar-valued functions. Let $M_\tau=\sup_{t\in[0,1-\tau]}\frac{\dot{\kappa}_t}{1-\kappa_t}$ be the upper bound of the conditional transition rate $u_t(z,x|x_1)$ for $\dd^H(z,x)=1$.

\subsection{Bound for KL Divergence}

We first present a general result for bounding the KL divergence between the marginal distributions of two CTMCs. Suppose that $X(t)$ has marginal distribution $p^X_t$ on the probability space $(\Omega,\mc{F},\P^X)$ and marginal distribution $p^Y_t$ on the probability space $(\Omega,\mc{F},\P^Y)$. According to Girsanov's theorem (\cref{thm:change of measure}), we can derive the bound for KL divergence of $p^X_t$ and $p^Y_t$.
\begin{theorem}[Bound for KL Divergence]\label{thm:KL bound}
Suppose that $X(t)$ is a CTMC with natural filtration $\mc{F}_t$ and rate $u_t^X$ under measure $\P^X$; with rate $u_t^Y$ under measure $\P^Y$. Under the conditions in \cref{thm:change of measure}, the KL divergence between $p^X_t$ and $p^Y_t$ is 
    \begin{align*}
        &~D_{\text{KL}}(p^X_{t}||p^Y_t)\\
        \leq&~\E^X\setBig{\int_0^t\sum_{x\neq X(s)}D_F(u^X_s(x,X(s))||u^Y_s(x,X(s)))\dd s}.
    \end{align*}
    where $D_F$ is the Bregman divergence with function $F(x)=x\log x$.
\end{theorem}
\cref{thm:KL bound} is {\it crucial} for developing the estimation error. Due to the sparsity of the transition rate matrix, intuitively, if $D_F(u_t^X(x,z)||u_t^Y(x,z))\leq \eps$ uniformly for $t\in[0,1-\tau]$ and $(x,z)$ with $\dd^H(x,z)=1$, then $D_{KL}(p_{1-\tau}^X||p_{1-\tau}^Y)\leq \mc{D}\abs{\mc{S}}\eps$, which is linear in $\mc{D}$. In the next subsection, we will systematically perform an error analysis for our distribution estimation based on discrete flow models.

\subsection{Error Analysis without Boundedness Condition}\label{sec:analysis without boundedness}
In this subsection, we establish the non-asymptotic error bound for the estimated distribution without imposing conditions on the data distribution and oracle transition rate.

Suppose that $p_{t}$ (resp. $\hat{p}_t$) is the marginal distribution of a CTMC with rate $u_t^0$ (resp. $\hat{u}_t$) at time $t$, where $p_0=\hat{p}_0$. For random samples $\mathbbm{D}_n$, according to \cref{thm:KL bound}, we have
\begin{align*}
    \E_{\mathbbm{D}_n}\brac{D_{KL}(p_{1-\tau}||\hat{p}_{1-\tau})}
    \leq&~\paren{1-\tau}\E\setBig{\sum_{z\neq X(\rvt)}g(\hat{u},z,Z)}\\
    =&~\E_{\mathbbm{D}_n}\brac{\mc{L}(\hat{u})-\mc{L}(u^0)}.
\end{align*}

\noindent{\bf Estimation Error Decomposition.} The following Proposition gives us a standard error decomposition for empirical risk minimization, which decomposes the estimation error into stochastic error and approximation error.
\begin{proposition}[Estimation Error Decomposition 1]\label{prop:error decomposition 1}For random sample $\mathbbm{D}_n$, the excess risk of the estimator $\hat{u}$ through ERM (\Eqref{eq:training objective}) satisfies
    { \begin{align*}
    &~\E\brac{\mc{L}(\hat{u})-\mc{L}(u^0)}\leq \underbrace{\E\sup_{u\in\mc{G}_n}\absBig{\mc{L}_n(u)-\mc{L}(u)}}_{\text{Stochastic Error}}\\
    &~\qquad\quad+\underbrace{\inf_{u\in\mc{G}_n}\E \bracBig{\sum_{z\neq X(\rvt)}D_F(u^0_\rvt(z,X(\rvt)||u_\rvt(z,X(\rvt)))}}_{\text{Approximation Error}}.
    \end{align*}}
\end{proposition}

\noindent{\bf Stochastic Error Bound.} Now, we establish the upper bound of the stochastic error in \cref{prop:error decomposition 1} using the empirical process theory. Before presenting our stochastic error bound, we first introduce the definition of uniform covering number.
\begin{definition}[Uniform Covering Number, \cite{jiao2023deep}]
    Let $S$ be a subset of $\R^n$. Given a positive real number $\eps$, a subset $\mc{C}$ of $S$ is called an $\eps$-covering of $S$ w.r.t. the infinity norm if for any $x\in S$, there is $z\in\mc{C}$ such that $\norm{z-x}_{L^\infty}<\eps$. The minimal cardinality $\mc{N}_n(\eps,S,L^\infty)$ of all possible $\mc{C}$ is called the covering number of $S$.
    For a given sequence $\rvs=\set{x_i}_{i=1}^n\in (\mc{X})^n$, let $\mc{F}|_{\rvs}=\setBig{\parenBig{f(x_1),f(x_2),\dots,f(x_n)}:f\in\mc{F}}$. We define the covering number of $\mc{F}$ constrained on $\rvs$ as $\mc{N}_n(\eps,\mc{F}|_{\rvs},L^\infty).$ The uniform covering number is defined as $$\mc{N}_n(\eps,\mc{F},L^\infty)=\max\set{\mc{N}_n(\eps,\mc{F}|_{\rvs},L^\infty):\rvs\in(\mc{X})^n}.$$
\end{definition}

\begin{theorem}[Stochastic Error]\label{thm: stochastic error 1} The stochastic error in \cref{prop:error decomposition 1} has the following upper bound:
{\begin{align*}
    &~\E\sup_{u\in\mc{G}_n}\absBig{\mc{L}_n(u)-\mc{L}(u)}\\
    \leq&~ CM_\tau(\log m_n^{-1}+1)\mc{D}\abs{\mc{S}}\sqrt{\frac{\log \brac{\mc{D}\abs{\mc{S}}\mc{N}_n(\frac{cm_n}{n},\mc{G}_n^\prime,L^\infty)}}{n}}.
\end{align*}}
\end{theorem}

\noindent{\bf Approximation Error Bound.} We consider using neural network (NN) functions with ReLU activation function to approximate the oracle transition rate $u^0$. To impose some regularity condition on the oracle rate, we first introduce the Hölder class. For a finite constant $B_0>0$ and input dimension $d\in\N^+$, the Hölder class $\mc{H}^\beta([0,1]^d,B_0)$ is defined as 
\begin{align*}
    \mc{H}^\beta([0,1]^d,B_0)=&~\Big\{f:[0,1]^d\to\R,\max_{\lone{\alpha}\leq s}\linf{\partial^\alpha f}\leq B_0,\\
    &~\max_{\lone{\alpha}=s}\sup_{x\neq y}\frac{\abs{\partial^\alpha f(x)-\partial^\alpha f(y)}}{\ltwo{x-y}^r}\leq B_0\Big\},
\end{align*}
where $\beta=r+s, s=\lfloor \beta\rfloor\in\N$, $\partial^\alpha=\partial^{\alpha_1}\cdots\partial^{\alpha_d}$ with $\alpha=(\alpha_1,\dots,\alpha_d)^\top\in\N^d$ and $\lone{\alpha}=\sum_{i=1}^d\alpha_i$.

\begin{assumption}[Hölder Smoothness]\label{con:holder}
For each $d\in\brac{\mc{D}}$ and $s\in\mc{S}$, there exists a function $\bar{u}^{0,d}_\cdot(s,\cdot)$ belongs to the Hölder class $\mc{H}^\beta([0,1-\tau]\times[0,\abs{\mc{S}}]^{\mc{D}},B_0)$ for a given $\beta>0$ such that $(1-\delta_{s}(x^d))u^{0,d}_t(s,x)=\bar{u}^{0,d}_t(s,x)$ for any $t\in[0,1-\tau]$ and $x\in\mc{S}^\mc{D}$. (We allow $B_0$ to depend on $\tau$.)
\end{assumption}

\begin{theorem}[Approximation Error]\label{thm:approximation error}
    Let $\mc{G}_n^\prime$ be a class of ReLU networks with range $[m_n,M_\tau]$, depth $\mathrm{D}^*=21(\lfloor\beta\rfloor+1)^2S_1\lceil\log_2(8S_1)\rceil+3$, width $\mathrm{W}^*=38(\lfloor \beta\rfloor+1)^2(\mc{D}+1)^{\lfloor \beta\rfloor+1}S_2\lceil\log_2(8S_2)\rceil$, $S_1,S_2\in\N^+$, where $m_n=\abs{\mc{S}}^{\beta}B_0(\lfloor\beta\rfloor+1)^2(\mc{D}+1)^{\lfloor\beta\rfloor+(\beta\vee1)/2}(S_1S_2)^{-2\beta/(\mc{D}+1)}$. If \cref{con:holder} holds, then the approximation error has the following upper bound:
{   \begin{align*}
        &~\inf_{u\in\mc{G}_n}\E \bracBig{\sum_{z\neq X(\rvt)}D_F(u^0_\rvt(z,X(\rvt)||u_\rvt(z,X(\rvt)))}\\
        \leq&~ C\mc{D}\abs{\mc{S}}^{\beta+1}B_0(\lfloor\beta\rfloor+1)^2(\mc{D}+1)^{\lfloor\beta\rfloor+\frac{\beta\vee1}{2}}(S_1S_2)^{-\frac{2\beta}{\mc{D}+1}}.
    \end{align*}}
\end{theorem}

\noindent{\bf Early Stopping Error Bound.} We present the early stopping error bound for the total variation between $p_1$ and $p_{1-\tau}$ in the following theorem.
\begin{theorem}[Early Stopping Error]\label{thm:early stopping error}
    Consider the conditional probability path in \Eqref{eq:mixture path}. The total variation between $p_1$ and $p_{1-\tau}$ has the following error bound
\[
\resizebox{\linewidth}{!}{$\displaystyle
\begin{aligned}
   &~\text{TV}(p_1,p_{1-\tau})\\
   \leq&~\begin{cases}
       1-\exp\setBig{\mc{D}\log\parenBig{-(1-\kappa_{1-\tau})\frac{\abs{\mc{S}}-1}{\abs{\mc{S}}}+1}} & \text{if }p_0^d\sim \mc{U}(\mc{S})\\
       1-\exp\setBig{\mc{D}\log\kappa_{1-\tau}} & \text{if }p_0^d\sim \delta_\mask.
   \end{cases}
\end{aligned}$}
\]
\end{theorem}
If the time schedule we used is the linear schedule $\kappa_t=t$, then the early stopping error of the discrete flow in \cref{thm:early stopping error} has the same convergence rate (of $\tau\mc{D}$) as that of the discrete diffusion derived in Theorem 1 of \cite{zhang2024convergence} as $\tau\to0^+$. In our results, we consider both masked and uniform source distribution.

\noindent{\bf Overall Distribution Error.} Combining stochastic error, approximation error and early stopping error bounds, we are ready to derive an overall error bound for the estimated distribution.

\begin{theorem}[Distribution Error]\label{thm:overall error bound 1} Suppose that we choose a linear schedule $\kappa_t=t$ and $B_0\leq C\tau^{-1}$. Under \cref{con:holder}, there exists a sequence of ReLU function classes $\mc{G}_n^\prime$ such that for sufficiently large $n\tau^2$ the total variation between the data distribution and the estimated distribution has the following error bound (up to a logarithmic factor of $n\tau^2$):
\begin{align*}
    &~\E_{\mathbbm{D}_n}\brac{TV(p_{1},\hat{p}_{1-\tau})}\\
    \leq&~ C\abs{\mc{S}}^{\beta+1}\mc{D}^{\lfloor\beta\rfloor+1}\tau^{-(1/2)}(n\tau^2)^{-\frac{\beta}{2(2\beta+\mc{D}+1)}}+C\tau\mc{D}.
\end{align*}
    
\end{theorem}
\begin{remark}[Discussion on the Error Bounds]
     Theorem 2 implies that the stochastic error of the ERM estimator scales with several components including covering entropy of the function class, dimension, early stopping and clipping parameters. For architecture selection, the proof of Theorem 5 suggests choosing a network class with a width not diverging with the sample size and a depth diverging with $n$. For early stopping and clipping parameter selection, combining the early stopping error, the theory suggests choosing the clipping parameter $m_n$ to have the order of $O(\tau^{2})$ (if we have a fixed $\mc{D}\abs{\mc{S}}$), since we choose $m_n$ to be our approximation error in Theorem 3.
    
\end{remark}


\subsection{Error Analysis with Boundedness Condition}\label{sec:analysis with boundedness}
In this subsection, we derive the non-asymptotic error bound for the estimated distribution with the condition on the lower-boundedness of the oracle transition rate. We will show that the convergence rate of the estimated distribution is nearly optimal in terms of the sample size. The main difference of the results in the current and previous subsections is the error decomposition and stochastic error bounds.

\begin{assumption}[Boundedness]\label{con:boundedness}
   The oracle rate is uniformly bounded from below: $u^0_t(z,x)> m$ uniformly for any ${t\in[0,1-\tau],z,x\in\mc{S}^\mc{D}}$, where $\dd^H(z,x)=1$.
\end{assumption}
\begin{remark}
    We only impose conditions on the state pairs with Hamming distance equal to one. \cref{con:boundedness} can be satisfied by choosing mixture path and uniform source distribution in \Eqref{eq:mixture path}; see \cref{discuss:assumptions} for further discussion. \cref{con:boundedness} provides the strong convexity of $F(x)=x\log x$ on $[m,M_\tau]$, and yields the bounds $\abs{\log u^0-\log u}\leq m^{-1}\abs{u^0-u}$ and $D_F( u^0||u)\leq m^{-1}\abs{u^0-u}^2$ if $u_t(z,x)\ge m$ for $d^H(z,x)=1$. In this case, we can choose the parameter $m_n=m$ in \eqref{eq:function class}.
\end{remark} 

\noindent{\bf Estimation Error Decomposition.} To utilize \cref{con:boundedness} and derive a better stochastic error bound, we consider another estimation error decomposition, which is similar to the case for $\ell_2$-loss in \citet{jiao2023deep}.
\begin{proposition}[Estimation Error Decomposition 2]\label{prop:error decomposition 2}
    For random sample $\mathbbm{D}_n$, the excess risk of the estimator $\hat{u}$ through ERM (\Eqref{eq:training objective}) satisfies
    \begin{equation*}{\small 
    \begin{aligned}
       &~\E_{\mathbbm{D}_n}\brac{\mc{L}(\hat{u})-\mc{L}(u^0)}\leq\underbrace{\E_{\mathbbm{D}_n}\brac{\mc{L}(\hat{u})+\mc{L}(u^0)-2\mc{L}_n(\hat{u})}}_{\text{Stochastic Error}}\\
       &~\qquad+2\underbrace{\inf_{u\in\mc{G}_n}\E \bracBig{\sum_{z\neq X(\rvt)}D_F(u^0_\rvt(z,X(\rvt)||u_\rvt(z,X(\rvt)))}}_{\text{Approximation Error}}.
    \end{aligned}}\end{equation*}
\end{proposition}

\noindent{\bf Stochastic Error Bound.} By using \cref{con:boundedness}, we can derive a {\it fast rate} of stochastic error, which has a rate of $n^{-1}$ if we fix the function class $\mc{G}_n$, dimension $\mc{D}$ and vocabulary size $\abs{\mc{S}}$.
\begin{theorem}[Stochastic Error - Fast Rate]\label{thm:stochastic error 2}
    Assume that \cref{con:boundedness} holds. For sufficiently large $n$, the stochastic error in \cref{prop:error decomposition 2} has the following upper bound:
 {\[
\resizebox{\linewidth}{!}{$\displaystyle\begin{aligned}
        &~\E_{\mathbbm{D}_n}\brac{\mc{L}(\hat{u})+\mc{L}(u^0)-2\mc{L}_n(\hat{u})}\\
        \leq&~ \dfrac{C(1+\log\frac{M_\tau}{m})^2M_\tau^2\abs{\mc{S}}\mc{D}\log\brac{\abs{\mc{S}}\mc{D}\mc{N}_{n}(1/(2n),\mc{G}_n^\prime,L^\infty)}}{n}.
    \end{aligned}$}\]
}
\end{theorem}

\noindent{\bf Distribution Error Bound.} The approximation error bound with the lower-boundedness condition of the oracle rate can be derived following a similar argument of \cref{thm:approximation error}. The following theorem shows that under the boundedness condition (\cref{con:boundedness}), the distribution error has a faster convergence rate.
\begin{theorem}[Distribution Error - Fast Rate]\label{thm:overall error bound 2} Suppose that we choose a linear schedule $\kappa_t=t$ and $B_0\leq C\tau^{-1}$. Under \cref{con:holder} and \cref{con:boundedness}, there exists a sequence of ReLU function classes $\mc{G}_n^\prime$ such that for sufficiently large $n\tau^2$ the total variation between the data distribution and the estimated distribution has the following error bound (up to a logarithmic factor of $n\tau^2$):
\begin{align*}
    &~\E_{\mathbbm{D}_n}\brac{TV(p_{1},\hat{p}_{1-\tau})}\\
    \leq &~C\abs{\mc{S}}^{\beta+1}\mc{D}^{\lfloor\beta\rfloor+\frac{3}{2}}\tau^{-1}(n\tau^2)^{-\frac{\beta}{(2\beta+\mc{D}+1)}}+C\tau\mc{D}.
\end{align*}
\end{theorem}

The above distribution error bound achieves a better convergence rate, compared to the \textit{slow rate} derived in \cref{thm:overall error bound 1}. In the proof of \cref{thm:overall error bound 2}, we show that the KL divergence between the estimated distribution and $p_{1-\tau}$ has a rate of $\mc{O}((n\tau^2)^{\frac{2\beta}{2\beta +\mc{D}+1}})$ (up to some logarithmic factor), which nearly matches the optimal rate of nonparametric regression for $\ell_2$-loss \citep{28b3576e-da58-3870-9c54-add05b35b7f7} for a fixed early stopping parameter $\tau$. For given $\mc{D}$ and $\abs{\mc{S}}$, to balance the estimation error and the early stopping error, by choosing $\tau=\mc{O}(n^{-\frac{\beta}{6\beta+2\mc{D}+1}})$, the distribution error has the convergence rate $\mc{O}(n^{-\frac{\beta}{6\beta+2\mc{D}+2}})$.

\begin{remark}[Relation to Existing Works]
    We derive both slow and fast convergence rates for distribution estimation in discrete flow models, parallel to the continuous flow matching results of \citet{zhou2025an} and \citet{gao2024convergence}, respectively.
    \citet{su2025theoretical} established a total variation error bound for distribution estimation with Transformer, which is of order $O(\abs{\mc{S}}^{7d_0}n^{-\frac{1}{9\abs{\mc{S}}d_0}}(\log n)^{\frac{1}{9\abs{\mc{S}}d_0}})$ in their factorized setting; see Theorem 5.2 of \citet{su2025theoretical}. Here, they assume that the hidden dimension $d_0$ satisfies $d_0 \mid (\mc{D}+1)$. However, as discussed in Section 6 of \citet{su2025theoretical}, the error bound scales with $\abs{\mc{S}}^{7d_0}$, which can grow exponentially with the dimension $\mc{D}$. In addition, even for a fixed dimension $\mc{D}$, the required sample size can be exponential in the vocabulary size $\abs{\mc{S}}$ in order to make the error sufficiently small. Therefore, compared with our results, their bound may not provide a meaningful guarantee for distribution estimation in discrete flow models with large vocabulary size. Moreover, our analysis explicitly accounts for the early stopping error, which is not considered in \citet{su2025theoretical}.
\end{remark}

\begin{figure}[h!]
    \centering
    \begin{minipage}{0.96\linewidth}
        \centering
        \includegraphics[width=\linewidth]{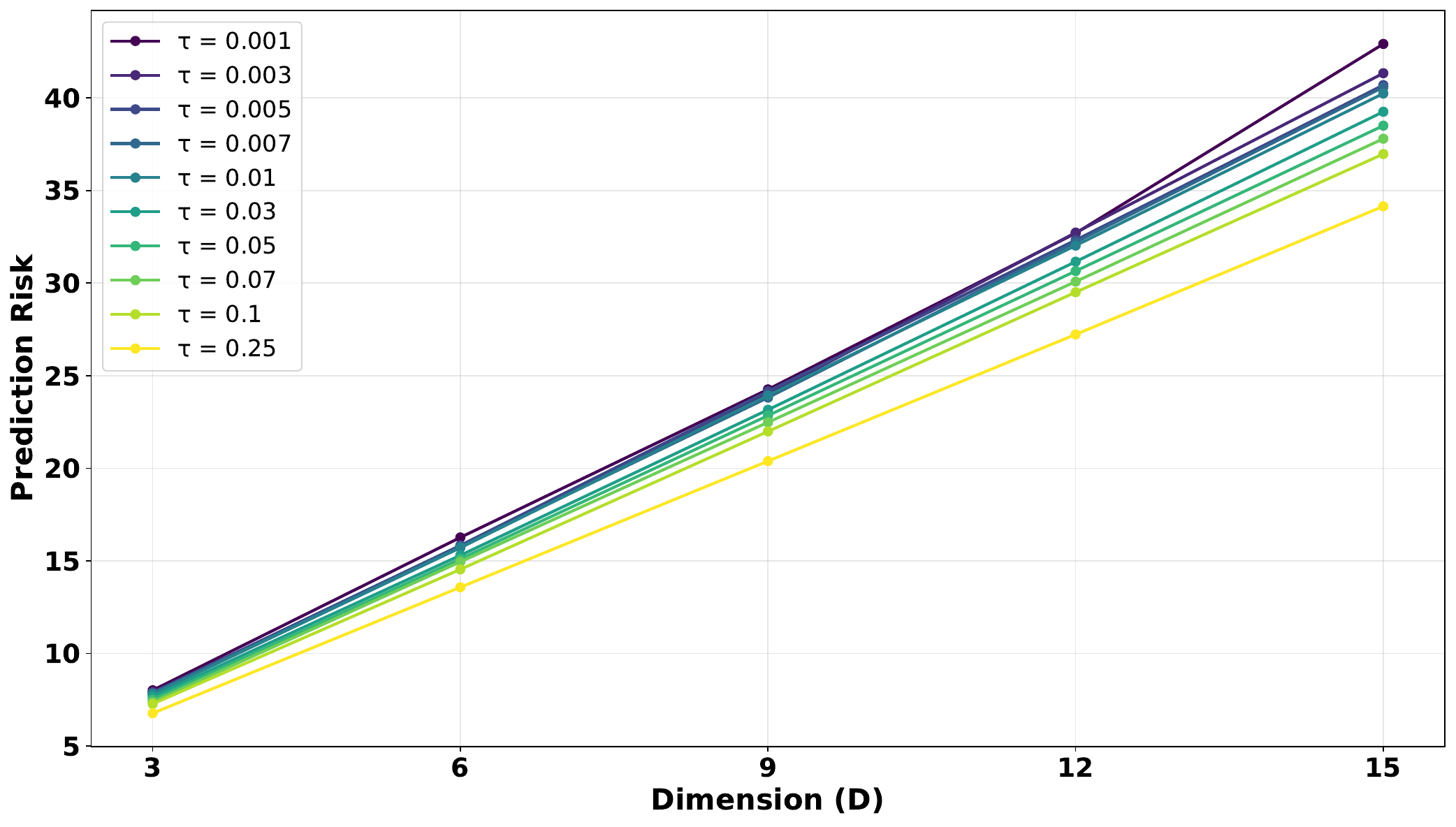}
        \caption{{\small Prediction risk v.s. dimension.}}
        \label{fig:loss_vs_dim}
    \end{minipage}
    \hfill
    \vspace{6pt}
    \hfill
    \begin{minipage}{0.96\linewidth}
        \centering
        \includegraphics[width=\linewidth]{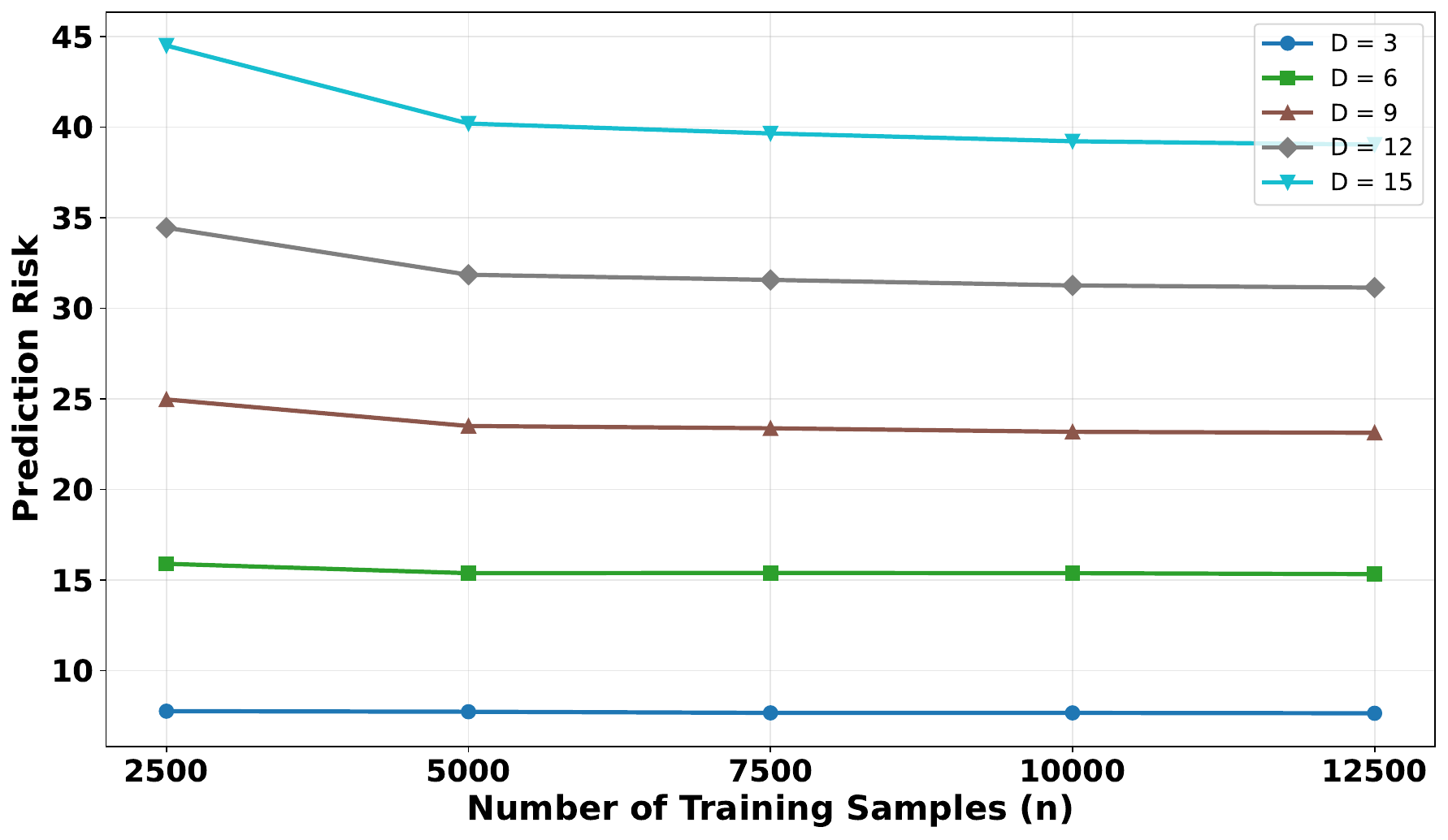}
        \caption{{\small Prediction risk v.s. sample size.}}
        \label{fig:loss_vs_n}
    \end{minipage}
\end{figure}

\section{Simulations}
\begin{figure*}[h!]
    \centering
    \includegraphics[width=0.75\linewidth]{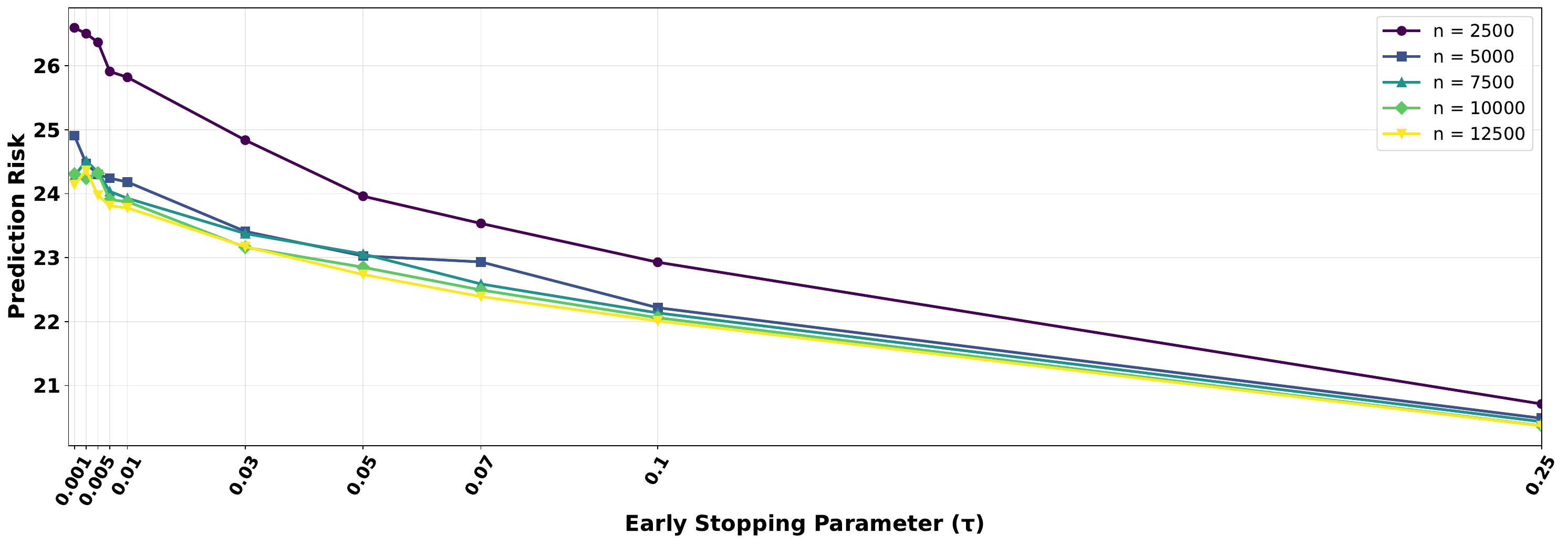}
    \caption{{\small Prediction risk v.s. early stopping parameter.}}
    \label{fig:loss_vs_tau}
\end{figure*}

\begin{figure*}[h!]
    \centering
    \includegraphics[width=0.75\linewidth]{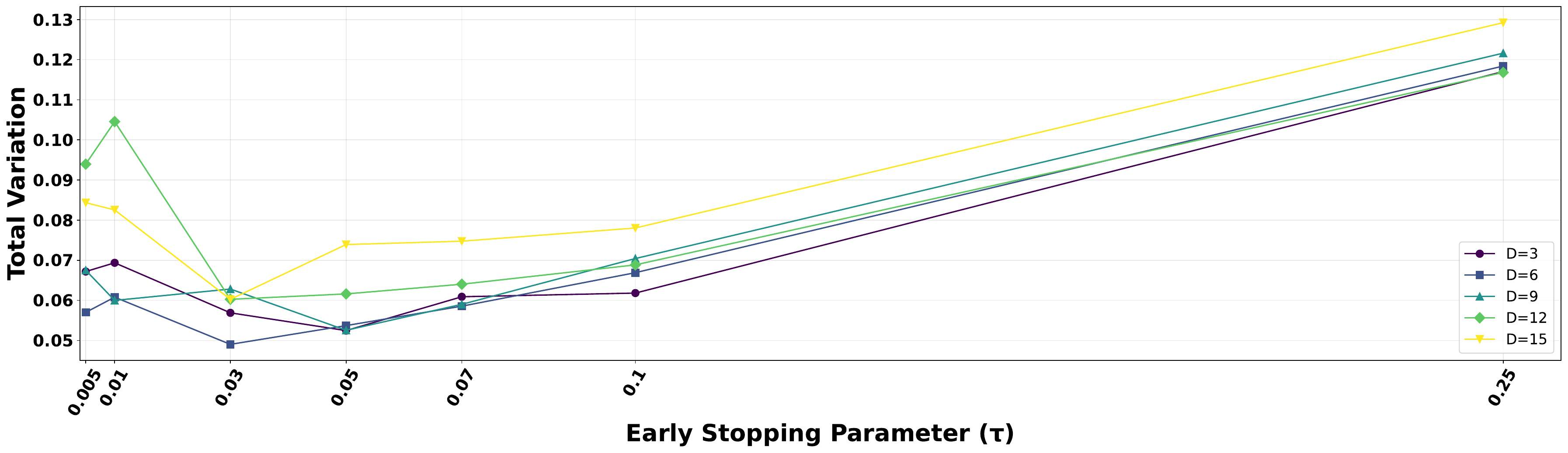}
    \caption{{\small Total variation v.s. early stopping parameter with uniformization algorithm.}}
    \label{fig: total variation vs early stopping}
\end{figure*}
To study the empirical performance of discrete flow-based models with different dimensions, sample sizes, and hyperparameters, we conduct several simulation experiments, demonstrating the consistency of our theoretical analysis and empirical results.
The data distribution and implementation details are described in \cref{sim: implementation details}. \cref{fig:loss_vs_dim}, \cref{fig:loss_vs_n} and \cref{fig:loss_vs_tau} present the simulation results for the empirical version of estimation error (up to an additive constant) with different sample size $n$, dimension $\mc{D}$ and early stopping parameter $\tau$. We can see that the estimation error is nearly linear in $\mc{D}$, and the estimation error decreases as $n$ and $\tau$ increase. 

In addition, we also calculate the total variation (of the empirical joint distribution on the first 3 dimensions, $8^3=512$ states in total) with different dimension $\mc{D}$ and $\mc{\tau}$ (see \cref{sim: overall performance evaluation} for details). As presented in \cref{fig: total variation vs early stopping}, the total variation decreases first and then increases as $\tau$ increases, with the minimum achieved around $\tau=0.03$ or $0.05$.



\section{Related Works}

\noindent {\bf Discrete Flow Models.}
Discrete flow models \citep{campbell2024generative,gat2024discrete,shaul2024flow} provide a more flexible framework to construct the transition rate and probability path than discrete diffusion models \citep{campbell2022continuous,lou2023discrete}, which also achieve superior performance in graph generation \citep{yimingdefog}, visual generation and multimodal understanding \citep{wang2025fudoki}, and video generation \citep{deng2025uniform}. The training objective of discrete flow models can be cross-entropy \citep{campbell2024generative,gat2024discrete}, negative ELBO \citep{shaul2024flow}, or Bregman divergence \citep{holderrieth2024generator}. \citet{su2025theoretical} established estimation error bounds for discrete flow matching using $\ell_2$-loss. In this work, we use the estimator through minimizing the empirical version of Bregman divergence (induced by $F(x)=x\log x$) to control the distribution error.

\noindent {\bf Error Analysis of Discrete Diffusion.}
Discrete diffusion models \citep{austin2021structured,campbell2022continuous,vignac2022digress,sun2022score,lou2023discrete,benton2024denoising} have emerged as a CTMC-based framework for learning distribution on finite state spaces. Some recent works \citep{chen2024convergence,zhang2024convergence,pham2025discrete,liang2025absorb,liang2025discrete,ren2024discrete,ren2025fast} investigate the theoretical property of discrete diffusion models. \citet{chen2024convergence} proposed to use uniformization algorithm for sampling in an exact way, and derived the distribution error bound for the scenario where the state space is a hypercube. \citet{zhang2024convergence} studied the theoretical results of discrete diffusion based on Girsanov-based method in the general state space $\mc{S}^\mc{D}$, with piece-wise homogeneous transition rates. \citet{ren2024discrete} derived the error bound for both $\tau$-leaping and uniformization algorithms using stochastic integrals. \cite{ren2025fast} proposed to use high-order $\tau$-leaping \citep{hu2011weak} for sampling and derived a better rate than \citet{ren2024discrete}. \citet{liang2025absorb} derived the error bound for the masked source distribution. \citet{liang2025discrete} improved the error bounds for $\tau$-leaping and Euler solver in terms of vocabulary size. \citet{pham2025discrete} established the convergence bounds under minimal data assumptions with a piece-wise constant backward rate. The training objective in these literature is the concrete score entropy introduced in \citet{lou2023discrete,benton2024denoising}. Unfortunately, in existing works, it is typical to assume strong regularity conditions directly on the estimation error, which is related to the early stopping parameter. Furthermore, in discrete diffusion models, there is a non-zero initialization error arising from the truncation of the time horizon in the noising process.

\noindent {\bf Error Analysis of Continuous Flow Models.}
Continuous flow matching \citep{liu2022flow,albergo2022building,lipman2022flow} is a powerful simulation-free method for learning continuous data distribution based on continuous normalizing flow \citep{chen2018neural}. Error bounds on the Wasserstein distance between two flow ODEs have been extensively studied \citep{albergo2022building,benton2023error,gao2024gaussian,gao2024convergence,zhou2025an}. \citet{albergo2022building} and \citet{benton2023error} derived the Wasserstein bound in terms of the spatial Lipschitz constants of the velocity fields by utilizing Grönwall's inequality. \citet{gao2024gaussian} established the spatial Lipschitz regularity of the velocity field for a range of target distributions. \citet{gao2024convergence,zhou2025an} presented a comprehensive error analysis of continuous flow matching by rigorously bounding four types of errors, including early stopping error, discretization error, stochastic error and approximation error.


\section{Conclusion and Future Works}
In this work, we have established the first non-asymptotic error bounds for the total variation between the data distribution and the distribution generated by the learned transition rate in discrete flow models with a generator matching framework. We provide a general error bound for the estimated distribution without imposing strong conditions on the oracle transition rates and data distributions, by taking estimation error into account. Moreover, the distribution error bound is also derived under the boundedness condition on the oracle rate, which yields a faster convergence rate.

There are two directions deserving future research. Firstly, the error bound we derived has zero discretization error via uniformization technique; while, in practice, the uniformization algorithm does not enjoy sampling efficiency. It would be interesting to investigate the theoretical results of accuracy-efficiency trade-offs for discrete flow models. Secondly, the current work analyzes the approximation error for ReLU networks only; deriving the approximation error for networks with self-attention layers is a challenging problem in approximation theory, requiring greater effort and more careful analysis.

\noindent{\bf Limitations.}
We acknowledge two main limitations of this work.

\emph{(1) Gap between theory and large-scale systems.}
Our analysis focuses on the empirical risk minimizer (ERM) and does not consider the optimization error introduced by stochastic gradient methods in practice. Consequently, although our error bounds reveal how the sample size, dimension, vocabulary size, complexity of function class, and early stopping parameter jointly affect the convergence rate, the practical guidance they offer for hyperparameter tuning (e.g., optimal choices of $\tau$, $m_n$, network width and depth) in large-scale systems is limited. Closing this gap by jointly analyzing optimization and statistical errors is an important direction for future work.

\emph{(2) Restricted to exact sampling via uniformization.}
We adopt uniformization to obtain zero discretization error, which simplifies the analysis but is computationally expensive when the rate upper bound is large (especially as $t\to 1^-$). In practice, more efficient samplers such as $\tau$-leaping are widely used, and they can be incorporated into our framework in two possible ways.
\emph{(i)} Choose a function class with additional time-Lipschitz regularity so that the discretization error of an efficient sampler (e.g., $\tau$-leaping) can be controlled by techniques developed in concrete score matching; the corresponding approximation error analysis under this Lipschitz constraint requires more careful treatment.
\emph{(ii)} Use a masked source distribution, under which the posterior $p_{1|t}$ is time-independent \citep{gat2024discrete}, so that the estimated rate $\hat{u}_t(z,x)=\frac{\dot{\kappa}_t}{1-\kappa_t}(\hat{p}_{1|t}(z|x)-\delta_x(z))$ is automatically time-Lipschitz when $\hat{p}_{1|t}$ is parameterized by a network without time embedding; the estimation error analysis can be carried out using essentially the same arguments as in this paper.








\section*{Acknowledgements}
Hongyuan Zha’s research is supported in part by Shenzhen Stability Science Program 2023, and
National Natural Science Foundation of China (72495131). Fang Fang gratefully acknowledges research support from National Natural Science Foundation of China (72331005). Guang Cheng gratefully acknowledges financial support through a gift from CISCO.

\section*{Impact Statement}
This paper presents work whose goal is to advance the field of machine learning. There are many potential societal consequences of our work, none of which we feel must be specifically highlighted here.

\clearpage
\bibliography{ref}
\bibliographystyle{icml2026}

\newpage
\appendix
\onecolumn

\section*{Appendix}
The appendix is organized as follows. \cref{sec:notation table} summarizes the main notation used throughout the paper. \cref{sec:uniformization algorithm} gives the uniformization algorithm. \cref{sec:change of measure} presents additional results for CTMCs. \cref{sec:discussion} offers some discussions on time singularity and conditions. \cref{sec:lemmas} presents some useful lemmas. \cref{sec:proof CTMC} gives the proof of CTMC theory. \cref{sec:proof main results 1} and \cref{sec:proof main results 2} provide the proof of main results. \cref{sim: additional detail and simulation} presents additional simulation experiments and implementation details.


\section{Notation Table}\label{sec:notation table}
We summarize the main notation in \cref{tab:notation}.

\begin{table}[h]
\centering
\caption{Summary of key notation.}\label{tab:notation}
\renewcommand{\arraystretch}{1.25}
\begin{tabular}{ll}
\toprule
\textbf{Symbol} & \textbf{Description} \\
\midrule
$\mc{D}$ & Sequence length (dimension of the state vector) \\
$\abs{\mc{S}}$ & Vocabulary size \\
$n$ & Sample size \\
$\tau$ & Early stopping parameter, $\tau\in(0,1/2)$ \\
$\kappa_t$ & Time schedule, $\kappa_0=0,\kappa_1=1$ \\
\midrule
$p_1$ & Target data distribution \\
$p_t$ & Marginal distribution at time $t$ generated by $u^0_t$ \\
$\hat{p}_t$ & Marginal distribution generated by the estimated rate $\hat{u}_t$ \\
$p_{t|1}(\cdot|x_1)$ & Conditional probability path given $X(1)=x_1$ \\
\midrule
$u_t(z,x)$ & Unconditional transition rate from $x$ to $z$ at time $t$ \\
$u_t(z,x|x_1)$ & Conditional transition rate given $X(1)=x_1$ \\
$u^0_t$ & Oracle transition rate \\
$u^*_t$ & Best approximation of $u^0_t$ in the function class $\mc{G}_n$ \\
$\hat{u}_t$ & ERM estimator of the transition rate, see \Eqref{eq:training objective} \\
$M_\tau$ & Upper bound $\sup_{t\in[0,1-\tau]}\dot{\kappa}_t/(1-\kappa_t)$ \\
$m$ & Lower bound on $u^0_t(z,x)$ for $\dd^H(z,x)=1$, see \cref{con:boundedness} \\
$m_n$ & Clipping parameter of the function class $\mc{G}_n$ \\
\midrule
$\mc{L}(u),\mc{L}_n(u)$ & Population and empirical risk \\
$\mc{G}_n$ & Matrix-valued function class, see \Eqref{eq:function class} \\
$\mc{G}_n^\prime$ & Scalar-valued ReLU network class used to build $\mc{G}_n$ \\
$\beta,B_0$ & Hölder smoothness exponent and radius, see \cref{con:holder} \\
\bottomrule
\end{tabular}
\end{table}


\section{Uniformization Algorithm}\label{sec:uniformization algorithm}
We present the uniformization algorithm for sampling in \cref{alg:sampling via uniformization}. 
\begin{algorithm}[htbp]
\caption{Sampling via Uniformization}
\label{alg:sampling via uniformization}
\begin{algorithmic}[1]
\Require A learned transition rate $\hat{u}$, an early stopping parameter $\tau>0$, time partition $0=t_0<t_1<\cdots<t_N=1-\tau$, parameters $\lambda_1,\lambda_2,\dots,\lambda_N$ satisfying $\sup_{t\in[t_k,t_{k+1}]}\sum_{z\neq x}\hat{u}_t(z,x)\leq \lambda_{k+1}$ for any $k\in\set{0,1,\dots,N-1}, x\in\mc{S}^\mc{D}$.
\State Draw $Y_0\sim\mc{U}(\mc{S}^\mc{D})$.
\For{$k=0$ to $N-1$}

  \State Draw $M\sim \text{Poisson}(\lambda_{k+1}(t_{k+1}-t_k))$.
    \State Sample $M$ points i.i.d. from $\mc{U}([t_{k},t_{k+1}])$ and sort them as $\tau_1<\tau_2<\cdots<\tau_M$.
    \State Set $Z_0=Y_k$.
    \For{$j=0$ to $M-1$} 
    \State Set $Z_{j+1}=\begin{cases}
        z,&~\text{with probability } \hat{u}_{\tau_j}(z,Z_j)/\lambda_{k+1}\\
        Z_j,&~\text{with probability } 1-\sum_{z\neq Z_j}\hat{u}_{\tau_j}(z,Z_j)/\lambda_{k+1}
    \end{cases}$, where $z\neq Z_j$.
    \EndFor
    \State Set $Y_{k+1}=Z_M$.
  \EndFor
\State \Return $Y_N\sim\hat{p}_{1-\tau}$
\end{algorithmic}
\end{algorithm}

\section{Additional Results for CTMCs}\label{sec:change of measure}

In this section, we derive the compensator of the random measure $N(t,A)$ and develop a Girsanov-type theorem for CTMCs.

\subsection{Compensated Random Measure}

We derive the compensator of the random measure $N(t,A)$ (\cref{def:random measure}) in the following proposition, where $A\subseteq\mc{S}^\mc{D}$.
\begin{proposition}[Compensator]\label{prop:compensator}
    Under the assumptions in \cref{prop:uniformization}, the (martingale-valued) compensated random measure of $N(t,A)$ is
    \begin{align*}
        \tilde{N}(t,A)=N(t,A)-\int_0^t\sum_{z\in A}u_s(z,X(s-))\mathbbm{1}(z\neq X(s-))\dd s.
    \end{align*}
\end{proposition}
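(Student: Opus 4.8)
The plan is to identify the compensator $\tilde N(t,A)$ by verifying that the candidate process
\[
M(t) \defeq N(t,A)-\int_0^t\sum_{z\in A}u_s(z,X(s-))\mathbbm{1}(z\neq X(s-))\,\dd s
\]
is an $\mc{F}_t$-martingale, which (together with the fact that the integral term is predictable and of finite variation on compacts, being an absolutely continuous process with locally bounded integrand by the uniformization assumptions) characterizes it as the compensated measure. Since $N(t,A)\le N(t)<\infty$ a.s.\ (the uniformization bound noted after \cref{def:random measure}) and the integrand $\sum_{z\in A}u_s(z,X(s-))\mathbbm{1}(z\neq X(s-))$ is bounded by $-u_s(X(s-),X(s-))\le M$, both $N(t,A)$ and the integral are integrable, so $M(t)\in L^1$; measurability w.r.t.\ $\mc{F}_t$ is immediate from \cref{def:random measure} and left-continuity of $X(s-)$.

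The core step is the martingale property $\E[M(t+h)-M(t)\mid \mc{F}_t]=0$. First I would reduce to the infinitesimal statement: for the increment over $(t,t+h]$ one writes
\[
\E[N(t+h,A)-N(t,A)\mid\mc{F}_t]
=\E\bracBig{\#\set{t<s\le t+h:\Delta X(s)\neq0,\ X(s)\in A}\mid\mc{F}_t}.
\]
Conditioning on $X(t)=x$ and using the Markov property (\cref{def:CTMC}(2)), this expected count equals, to leading order, $\P(\text{a jump into }A\text{ occurs in }(t,t+h])=\sum_{z\in A,\,z\neq x}u_t(z,x)h+o(h)$ by \cref{def:CTMC}(1); the probability of two or more jumps in $(t,t+h]$ is $O(h^2)$ uniformly, thanks to the uniform bound $-u_s(x,x)\le M$ and the remainder control $R_t=O(h^2)$ from \cref{prop:uniformization} (this is exactly where the uniformization hypotheses are used). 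Meanwhile, by continuity of $u_s(z,\cdot)$ in $s$ and right-continuity of paths, $\E\bracBig{\int_t^{t+h}\sum_{z\in A}u_s(z,X(s-))\mathbbm{1}(z\neq X(s-))\,\dd s\mid\mc{F}_t} = \sum_{z\in A,\,z\neq x}u_t(z,x)h+o(h)$ as well. Hence $\E[M(t+h)-M(t)\mid\mc{F}_t]=o(h)$.

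To upgrade the infinitesimal identity to an exact martingale increment over a fixed $h$, I would partition $(t,t+h]$ into $n$ subintervals, apply the above estimate on each subinterval using the tower property and the Markov property (so the $o(\cdot)$ terms on each piece are controlled uniformly by the boundedness afforded by \cref{prop:uniformization}), sum, and let $n\to\infty$; the accumulated error is $n\cdot o(h/n)\to 0$ by uniform boundedness of the transition rates and of the second-order remainder. Alternatively, and more cleanly, I would invoke the Dynkin-type formula: for $f=\mathbbm{1}_A$ one has $\E[f(X(t+h))\mid\mc{F}_t]-f(X(t)) = \E[\int_t^{t+h}(\mc{L}_sf)(X(s))\,\dd s\mid\mc{F}_t]$ with generator action $(\mc{L}_sf)(x)=\sum_z u_s(z,x)f(z)$, decompose the jump count into contributions $\sum_{z\in A}\mathbbm{1}(X(s)=z,X(s-)\neq z)$ summed over jump times, and match terms; the bookkeeping between "jumps into $A$" and "jumps within $A$" is handled by the indicator $\mathbbm{1}(z\neq X(s-))$ in the stated compensator. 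The main obstacle is this rigorous passage from the $o(h)$ local estimate to a genuine martingale — i.e.\ justifying the interchange of limit, sum, and conditional expectation and showing the $O(h^2)$ remainders genuinely accumulate to zero — which is precisely what the uniform bounds in \cref{prop:uniformization} (the constant $M$ and the Lipschitz constant $L$, giving $R_t\le(M^2+L)h^2$) are designed to supply; everything else is routine once the representation $N(t,A)\le N(t)$ rules out integrability pathologies.
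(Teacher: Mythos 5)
Your proposal is correct and follows essentially the same route as the paper: an infinitesimal estimate $\E[N(t+h,A)-N(t,A)\mid\mc{F}_t]=\sum_{z\in A}u_t(z,X(t))\mathbbm{1}(z\neq X(t))h+O(h^2)$ obtained from the uniformization bounds, followed by partitioning the interval, applying the tower property, and passing to the limit. The one point worth tightening is that the multiple-jump contribution requires bounding the \emph{expectation} of the jump count on the event of two or more jumps (via the Poisson domination $N(t+h,A)-N(t,A)\le N(t+h)-N(t)$, which gives $Mh-Mhe^{-Mh}\le M^2h^2$), not merely the probability of that event as your sketch states; this is exactly how the paper closes that step.
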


\subsection{Change of Measure}
Suppose that $X(t)$ is a CTMC with transition rate $u^X$ and natural filtration $\mc{F}_t$ under probability space $(\Omega,\mc{F},\P^X)$. Our goal is to find a probability measure $\P^Y\ll \P^X$ such that $X(t)$ is a $\P^Y$-CTMC with a rate $u^Y$. Denote $N(t,A)$ as the random measure associated with $X(t)$. Let $\exp(W(t))$ be an exponential $\P^X$-martingale (see \cref{lemma:exponential martingale} in Appendix), where $W(t)$ has the following form:
\begin{align}\label{eq:Lévy-type integral}
    W(t)=W(0)+\int_0^t\sum_{x\neq X(s-)}F(s,x)\dd s+\int_0^t\int_{x\in\mc{S}^\mc{D}} K(s,x)N(\dd s,\dd x).
\end{align}
Since $\exp(W(t))$ is a $\P^X$-martingale with mean one, we can define $\frac{\dd \P^Y}{\dd \P^X}\Big|_t=\frac{\dd \P^Y_t}{\dd \P^X_t}=\exp(W(t))$, where $\P_t$ is the restriction of the measure $\P$ on $(\Omega,\mc{F}_t)$. If we take some specific predictable processes $F$ and $K$, the following theorem shows that $$\tilde{N}^Y(t,A)\overset{\triangle}{=}N(t,A)-\int_0^t\sum_{z\in A}u^Y_s(z,X(s-))\mathbbm{1}(z\neq X(s-))\dd s$$ is a $\P^Y$-martingale, and $X(t)$ is a $\P^Y$-CTMC with rate $u_t^Y$.
\begin{theorem}[Change of Measure]\label{thm:change of measure}
    Assume that both transition rates $u_t^X$ and $u_t^Y$ satisfy the conditions in \cref{prop:uniformization}. Suppose that $u_t^X(x,X(t-))=0$ implies $u_t^Y(x,X(t-))=0$. Choosing the predictable processes
    \begin{align*}
        K(t,x)=\log \frac{u_t^Y(x,X(t-))}{u_t^X(x,X(t-))}~\text{ and }~F(t,x)=(u_t^X(x,X(t-))-u_t^Y(x,X(t-)))\mathbbm{1}(x\neq X(t-))
    \end{align*}
    in \Eqref{eq:Lévy-type integral}. The R-N derivative can be written as $\frac{\dd \P^Y}{\dd \P^X}\Big|_t=\exp(W(t))$, where
    {\begin{align*}
        W(t)={\int_0^t\sum_{x\neq X(s-)}\brac{u_s^X(x,X(s-))-u_s^Y(x,X(s-))}\dd s+\int_0^t\sum_{x\neq X(s-)} \log \frac{u_s^Y(x,X(s-))}{u_s^X(x,X(s-))}N(\dd s,x)}.
    \end{align*}}
    Then $\tilde{N}^Y(t,A)$ is a $\P^Y$-martingale for any $A\subseteq\mc{S}^\mc{D}$. Moreover, $X(t)$ is a $\P^Y$-CTMC with rate $u^Y_t$.
\end{theorem}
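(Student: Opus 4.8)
The plan is to lean on the abstract change-of-measure principle: since $\exp(W(t))=:Z(t)$ is a mean-one $\P^X$-martingale (the exponential-martingale lemma referenced above) and $\frac{\dd\P^Y}{\dd\P^X}\big|_t=Z(t)$, an adapted process $L$ is a $\P^Y$-(local) martingale if and only if $LZ$ is a $\P^X$-(local) martingale. So the first assertion reduces to showing that $\tilde N^Y(\cdot,A)\,Z(\cdot)$ is a $\P^X$-local martingale for every $A\subseteq\mc{S}^\mc{D}$, which is then upgraded to a true martingale; the second assertion follows from the first by identifying the $\P^Y$-compensator of the jump measure of $X$ and appealing to well-posedness of the associated martingale problem. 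Note that the hypothesis ``$u^X_t(x,X(t-))=0\Rightarrow u^Y_t(x,X(t-))=0$'' is precisely what makes $K$ well-defined and the ratio $u^Y_t/u^X_t-1$ vanish on $\{u^X_t(\cdot,X(t-))=0\}$, so all integrands below are meaningful.

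First I would compute $\dd Z(t)$. Splitting $W$ into its continuous finite-variation part $\int_0^t\sum_{x\neq X(s-)}F(s,x)\,\dd s$ and its pure-jump part $\int_0^t\int K(s,x)N(\dd s,\dd x)$, and noting that at a jump of $X$ to $x$ the process $W$ jumps by $K(t,x)=\log\frac{u^Y_t(x,X(t-))}{u^X_t(x,X(t-))}$, It\^o's formula for pure-jump semimartingales gives $\dd Z(t)=Z(t-)\big[\sum_{x\neq X(t-)}F(t,x)\,\dd t+\int(e^{K(t,x)}-1)N(\dd t,\dd x)\big]$. Substituting $F$ and $e^{K(t,x)}-1=\frac{u^Y_t(x,X(t-))}{u^X_t(x,X(t-))}-1$, and then compensating $N$ via \cref{prop:compensator} (so that $\int g(x)N(\dd t,\dd x)$ has $\P^X$-compensator $\sum_z g(z)u^X_t(z,X(t-))\mathbbm{1}(z\neq X(t-))\dd t$), the two drift contributions cancel identically, leaving $\dd Z(t)=Z(t-)\int\big(\frac{u^Y_t(x,X(t-))}{u^X_t(x,X(t-))}-1\big)\tilde N(\dd t,\dd x)$, a $\P^X$-local-martingale differential. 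This recomputes the exponential-martingale property and is reused at once.

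Next, set $L(t)=\tilde N^Y(t,A)$, so $\dd L(t)=\int_{x\in A}N(\dd t,\dd x)-\sum_{z\in A}u^Y_t(z,X(t-))\mathbbm{1}(z\neq X(t-))\,\dd t$, and apply the semimartingale product rule $\dd(LZ)=L(t-)\dd Z(t)+Z(t-)\dd L(t)+\dd[L,Z](t)$, the covariation being purely discontinuous: at a jump of $X$ to $x$ one has $\Delta L=\mathbbm{1}_A(x)$ and $\Delta Z=Z(t-)\big(\frac{u^Y_t(x,X(t-))}{u^X_t(x,X(t-))}-1\big)$, so $\dd[L,Z](t)=Z(t-)\int_{x\in A}\big(\frac{u^Y_t(x,X(t-))}{u^X_t(x,X(t-))}-1\big)N(\dd t,\dd x)$. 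The term $L(t-)\dd Z(t)$ is already a $\P^X$-local-martingale differential by the previous step, and $Z(t-)\dd L(t)+\dd[L,Z](t)$ collapses to $Z(t-)\big[\int_{x\in A}\frac{u^Y_t(x,X(t-))}{u^X_t(x,X(t-))}N(\dd t,\dd x)-\sum_{z\in A}u^Y_t(z,X(t-))\mathbbm{1}(z\neq X(t-))\dd t\big]=Z(t-)\int_{x\in A}\frac{u^Y_t(x,X(t-))}{u^X_t(x,X(t-))}\tilde N(\dd t,\dd x)$, since the $\P^X$-compensator of $\int_{x\in A}\frac{u^Y_t}{u^X_t}N(\dd t,\dd x)$ is exactly $\sum_{z\in A}u^Y_t(z,X(t-))\mathbbm{1}(z\neq X(t-))\dd t$. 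Hence $\tilde N^Y(\cdot,A)Z(\cdot)$ is a $\P^X$-local martingale; localizing and controlling moments via the uniform bounds $-u^X_t(x,x),-u^Y_t(x,x)\le M$ from \cref{prop:uniformization} together with the identity $\E^X[G\,Z(t)]=\E^Y[G]$ upgrades it to a true $\P^X$-martingale, so $\tilde N^Y(\cdot,A)$ is a $\P^Y$-martingale, which is the first assertion.

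Finally, the first assertion says that for every $A$ the $\P^Y$-compensator of $N(\cdot,A)$ is $\int_0^\cdot\sum_{z\in A}u^Y_s(z,X(s-))\mathbbm{1}(z\neq X(s-))\,\dd s$. Feeding the representation \eqref{eq:CTMC representation} into $f(X(t))-f(X(0))=\int_0^t\int(f(z)-f(X(s-)))N(\dd s,\dd z)$ for arbitrary $f:\mc{S}^\mc{D}\to\R$ and compensating under $\P^Y$ shows that $f(X(t))-f(X(0))-\int_0^t\sum_z u^Y_s(z,X(s))f(z)\,\dd s$ is a $\P^Y$-martingale, the diagonal term appearing automatically because $\sum_{z\neq X(s)}u^Y_s(z,X(s))(f(z)-f(X(s)))=\sum_z u^Y_s(z,X(s))f(z)$ by condition (c) of \cref{def:CTMC}. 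This is the martingale problem for the time-dependent generator $f\mapsto\sum_z u^Y_s(z,\cdot)f(z)$; as $\mc{S}^\mc{D}$ is finite and $u^Y$ bounded and continuous in $t$, the problem is well-posed, so the $\P^Y$-law of $X$ is that of a CTMC with rate $u^Y_t$ (with unchanged initial law, since $W(0)=0$ gives $\P^Y|_{\mc{F}_0}=\P^X|_{\mc{F}_0}$); equivalently one reads off $\P^Y(X(t+h)=z\mid\mc{F}_t)=\delta_{X(t)}(z)+u^Y_t(z,X(t))h+o(h)$ directly from the compensator, recovering \cref{def:CTMC}. The main obstacle is the jump-calculus bookkeeping of the first two steps --- keeping the drift cancellations exact and remembering that the second argument of $N$ records the landing state, so that its $\P^X$-intensity is $u^X_t$ rather than a jump-size intensity --- together with the local-to-true-martingale upgrade; one may, if preferred, first run the martingale-problem argument using only the local-martingale version and then use the resulting bound ($\le M$) on the $\P^Y$-jump rate to recover the true-martingale claim.
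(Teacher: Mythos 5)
Your argument for the first assertion is essentially the paper's: you compute $\dd Z$ for $Z=\exp(W)$, apply the semimartingale product rule to $\tilde N^Y(\cdot,A)Z(\cdot)$, cancel the drifts against the $\P^X$-compensator from \cref{prop:compensator}, and arrive at exactly the paper's representation $L(t-)\,\dd Z(t)+Z(t-)\int_{x\in A}\frac{u^Y_t}{u^X_t}\tilde N^X(\dd t,\dd x)$, after which the transfer to a $\P^Y$-martingale is the same abstract Bayes/Girsanov equivalence the paper invokes (Lemma 5.2.11 of Applebaum). Where you genuinely diverge is the second assertion. The paper proceeds by a direct infinitesimal computation: it expresses $\P^Y(X(t+h)=z\mid\mc{F}_t)$ through \cref{lemma:RN}, reads off the Markov property from the fact that the right-hand side depends only on $X(t)$, and then controls the multiple-jump contribution via $\P^Y(N(t+h,A)-N(t,A)>1\mid\mc{F}_t)=O(h^2)$ combined with a Doob--Meyer decomposition of $(\tilde N^Y)^2$ and Cauchy--Schwarz, yielding $\P^Y(X(t+h)=z\mid\mc{F}_t)=u^Y_t(z,X(t))h+O(h^{3/2})$. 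You instead pass to the martingale-problem formulation, deducing from the $\P^Y$-compensator that $f(X(t))-f(X(0))-\int_0^t\sum_z u^Y_s(z,X(s))f(z)\,\dd s$ is a $\P^Y$-martingale for every $f$ and appealing to well-posedness of the martingale problem for a bounded, time-continuous generator on a finite state space. That route is legitimate and arguably cleaner, but it imports an external uniqueness theorem (e.g.\ Ethier--Kurtz) where the paper stays self-contained; and note that your parenthetical alternative of ``reading off'' the transition asymptotics directly from the compensator is precisely the step at which the paper must supply the Cauchy--Schwarz control of multiple jumps, so it is not free. Also be aware that the local-to-true-martingale upgrade you gesture at requires integrability of $Z(t)$, which depends on the jump ratios $u^Y/u^X$ and not merely on the diagonal bound $M$; this issue is shared with (and glossed over by) the paper's own proof, so it is not a defect you introduced, but your appeal to ``the uniform bounds from \cref{prop:uniformization}'' does not by itself close it.
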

\begin{remark}
    \cref{thm:change of measure} is similar to Theorem F.12 in \cite{pham2025discrete}, and can also be derived by applying Theorem F.12 in \cite{pham2025discrete} twice, following the argument in their proof of Theorem 2.3.\footnote{This follows a discussion during the review process of a previous version of our paper.} Compared to the Girsanov’s theorem used in \cite{zhang2024convergence}, our result can be applied to two CTMCs with arbitrary transition rates. In our result, we provide an explicit expression of the RN derivative in terms of two transition rates. This form is closely related to Theorem 9 in \cite{chen2023sampling} and Theorem 5.2.12 in \cite{applebaum2009levy} for Brownian motion with drift.
\end{remark}


\section{Discussions}\label{sec:discussion}

\subsection{Discussion on Time Singularity}\label{discuss:time singularity}
We consider one token case and linear time schedule $\kappa_t=t$ for simplicity. Assume that $p_0^d$ is of uniform distribution. Note that the conditional transition rate is $u_t(z,x|x_1)=\frac{\dot{\kappa}_t}{1-\kappa_t}(\delta_{x_1}(z)-\delta_x(z))$, which can generate the conditional probability path $p_{t|1}(x|x_1)=\kappa_t\delta_{x_1}(x)+(1-\kappa_t)p_0(x)$, where $\kappa_t\to1$ as $t\to1$. Then, when $p_1$ has full support, the marginal transition rate has a bounded limit as $t\to1$:
\begin{align*}
    u_t(z,x)=&~\frac{\dot{\kappa}_t}{1-\kappa_t}(p_{1|t}(z|x)-\delta_x(z))\\
    =&~\frac{\dot{\kappa}_t}{1-\kappa_t}\parenBig{\frac{p_1(z)p_{t|1}(x|z)}{\sum_{z}p_1(z)p_{t|1}(x|z)}-\delta_x(z)}\\
    =&~\frac{\dot{\kappa}_tp_0(x)(p_1(z)-\delta_x(z))}{\kappa_tp_1(x)+(1-\kappa_t)p_0(x)}\\
    \to&~\frac{\dot{\kappa}_1p_0(x)(p_1(z)-\delta_x(z))}{p_1(x)}.
\end{align*}
When $p_1$ is not fully supported, for $x\in\set{x\in\mc{S}^\mc{D}:p_1(x)=0}$, the above limit goes to infinity if $p_1(z)\in(0,1)$ since $u_t(z,x)=\frac{\dot{\kappa}_t(p_1(z)-\delta_x(z))}{1-\kappa_t}$ in this case. This result is analogous to the continuous flow matching counterparts \citep[see Proposition C.4 in][]{wanelucidating}.

For the transition rate estimator, however, since the conditional transition rate explodes as $t\to1$, it is hard to control the estimation error of $\hat{u}$ (see the discussion in \cref{discuss:assumptions}). Therefore, it is necessary to use the early stopping technique to balance the errors arising from estimation and early stopping.

\subsection{Discussion on $M_\tau$}

We consider the following time schedules.
\begin{enumerate}
    \item Consider the polynomial schedule $\kappa_t=t^{s}$ in \Eqref{eq:mixture path}, where $s\ge 1$. Then, by mean value theorem, for any $z^d\neq x^d$, if $t\in(0,1)$, we have
\begin{align*}
    u_t^d(z^d,x^d|x_1^d)\leq \frac{\dot{\kappa}_t}{1-\kappa_t}= \frac{st^{s-1}}{s(t+\theta(1-t))^{s-1}(1-t)}\leq \frac{1}{1-t},
\end{align*}
which means that the conditional transition rate has the uniform upper bound $M_\tau=\frac{1}{\tau}$. 
\item Consider the cosine schedule $\kappa_t=\cos^2(\frac{\pi}{2}(1-t))$ in \Eqref{eq:mixture path}, which is kinetic optimal as mentioned in \cite{shaul2024flow}. Then, by $\tan(a)>a$ ($a\in(0,\pi/2)$), for any $z^d\neq x^d$, if $t\in(0,1)$, we have
\begin{align*}
    u_t^d(z^d,x^d|x_1^d)\leq \frac{\dot{\kappa}_t}{1-\kappa_t}=\frac{\pi\cos(\frac{\pi}{2}(1-t))\sin(\frac{\pi}{2}(1-t))}{\sin^2(\frac{\pi}{2}(1-t))}=\frac{\pi}{\tan(\frac{\pi}{2}(1-t))}\leq\frac{2}{1-t},
\end{align*}
which means that the conditional transition rate has the uniform upper bound $M_\tau=\frac{2}{\tau}$. 
\end{enumerate}
Given other quantities, if $\tau\to 0$, then the stochastic error will go to infinity as presented in \cref{thm: stochastic error 1} and \cref{thm:stochastic error 2}, since $M_\tau=\mc{O}(\tau^{-1})$.

\subsection{Discussion on Boundedness Condition}\label{discuss:assumptions}
In this subsection, we discuss the lower bound $m$ for the oracle transition rate $u_t^0(z,x)$ (\cref{con:boundedness}) in some specific scenario, which also provides a reference for time schedule selection. 

We define the mixing coefficient $\alpha\in(0,1)$ such that $\alpha\leq\frac{p_1(x_1^{\bsl d}|x_1^d)}{p_1(x_1^{\bsl d})}\leq \alpha^{-1}$ for any $x_1\in\mc{S}^\mc{D}$ and $d\in\brac{\mc{D}}$ (for example, if $p_1(x_1)=\prod_{d=1}^\mc{D}p_1(x_1^d)$, then $\alpha=1$). We denote $\beta=\sup_{d,z^d,x^d}(p_1^d(z^d)/p_1^d(x^d))$, where $p_1^d$ is the marginal distribution of $p_1$ (for example, if $p_1^d$ is of uniform distribution, then $\beta=1$). Suppose that the source distribution is uniform; that is, $p_{t|1}^d(x^d|x_1^d)=\frac{1-\kappa_t}{\abs{\mc{S}}}+\kappa_t\delta_{x_1^d}(x^d)$. Then, for any $d\in\brac{\mc{D}}$ and $(z,x)\in\mc{S}^\mc{D}\times\mc{S}^\mc{D}$ such that $z^d\neq x^d, z^{\bsl d}=x^{\bsl d}$, we have
{\begin{align*}
    u^0_t(z,x)=&~\sum_{x_1}u_t^d(z^d,x^d|x_1^d)p_{1|t}(x_1|x)\\
    =&~\frac{\sum_{x_1}u_t^d(z^d,x^d|x_1^d)p^d_{t|1}(x^d|x_1^d)\prod_{i\neq d}p^i_{t|1}(x^i|x_1^i)p_1(x_1)}{\sum_{x_1}p^d_{t|1}(x^d|x_1^d)\prod_{i\neq d}p^i_{t|1}(x^i|x_1^i)p_1(x_1)}\\
    =&~\frac{\frac{\dot{\kappa}_t}{\abs{\mc{S}}}p_1^d(z^d)\sum_{x_1^{\bsl d}}\prod_{i\neq d}p^i_{t|1}(x^i|x_1^i)p_1(x_1^{\bsl d}|X(1)^d=z^d)}{\frac{1-\kappa_t}{\abs{\mc{S}}}\sum_{x_1^{\bsl d}}\prod_{i\neq d}p^i_{t|1}(x^i|x_1^i)p_1(x_1^{\bsl d})+\kappa_tp_1^d(x^d)\sum_{x_1^{\bsl d}}\prod_{i\neq d}p^i_{t|1}(x^i|x_1^i)p_1(x_1^{\bsl d}|X(1)^d=x^d)}\\
    \ge&~\frac{\abs{\mc{S}}^{-1}\dot{\kappa}_t\alpha}{(1-\kappa_t)\beta+\kappa_t\beta/\alpha}\\
    \ge&~\abs{\mc{S}}^{-1}\dot{\kappa}_t\alpha^2/\beta.
\end{align*}}
To give a specific example, we consider an AR($k$) autoregressive structure for the sequence; that is, $p_1(x_1^{d}|x_1^{<d})=p_1(x_1^d|x_1^{((l-k)\vee 1):(d-1)})$ for $d>2$. Then we have
\begin{align*}
    &~\frac{p_1(x_1^{\bsl d}|x_1^d)}{p_1(x_1^{\bsl d})}\\
    =&~\frac{p_1(x_1^1,\dots,x_1^{d-1})\prod_{l=d}^\mathcal{D}p_1(x_1^l|x_1^{((l-k)\vee 1):(l-1)})}{p_1(x^1,\dots,x_1^{d-1})p_1(x_1^d)\prod_{j=1}^kp(x_1^{d+j}|x_1^{(((d-k)\vee 1):(d+j-1))\bsl d})\prod_{l=d+k+1}^\mathcal{D}p_1(x_1^l|x_1^{((l-k)\vee 1):(l-1)})}\\
    =&~\frac{p_1(x_1^d|x_1^{((d-k)\vee 1):(d-1)})\prod_{j=1}^{k}p_1(x_1^{d+j}|x_1^{((d+j-k)\vee 1):(d+j-1)})}{p_1(x_1^d)\prod_{j=1}^kp(x_1^{d+j}|x_1^{(((d-k)\vee 1):(d+j-1))\bsl d})}.
\end{align*}
If $\log p_1(x_1^d)$ and $\log p_1(x_1^d|x_1^{((d-2k)\vee 1):(d-1)})\in[-c,0]$ for each $d\in[\mathcal{D}],x_1\in\mathcal{S}^\mathcal{D}$, then the right-hand side of the above equation satisfying $\alpha<\frac{p_1(x_1^{\bsl d}|x_1^d)}{p_1(x_1^{\bsl d})}<\alpha^{-1}$ with $\alpha=\exp(-(k+1)c)$ and $\beta= \exp(-c)$, which are independent of $\mathcal{D}$.

Therefore, if $\alpha$ and $\beta$ are positive constants and the vocabulary size $\abs{\mc{S}}$ is fixed, then $\underline{M}_c=\paren{\abs{\mc{S}}^{-1}\alpha^2/\beta}\inf_{t\in[0,1-\tau]}\dot{\kappa}_t$ is the uniform lower bound for the oracle transition rate $u^0_t(z,x)$, where $\dd^H(z,x)=1$. Empirically, one may adopt the linear schedule $\kappa_t = t$, which is commonly used in practice. More generally, a composite schedule can be employed: $\kappa_t=\frac{t+\kappa_t^0}{2}$, where $\kappa_t^0$ is a schedule that is a non-decreasing function of $t$.


\section{Some Useful Lemmas}\label{sec:lemmas}

\begin{lemma}[Exit Time]\label{lemma:exit time}
 Under the assumptions in \cref{prop:uniformization}, for the stopping time $T_t=\min\set{h>0:\Delta X(t+h)\neq 0}$, we have
\begin{align*}
    \P(T_t>h|\mathcal{F}_t)=\exp\parenBig{\int_{t}^{t+h} u_s(X(t),X(t))\dd s}.
\end{align*}
\end{lemma}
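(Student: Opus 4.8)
The plan is to derive a differential equation for the survival function $G(h) \defeq \P(T_t > h \mid \mathcal{F}_t)$ and solve it. First I would observe that by the Markov property (condition 2 of \cref{def:CTMC}) and the fact that $u_s$ generates the CTMC (condition 1), the event $\set{T_t > h}$ depends only on $X(t)$, not on the full history $\mathcal{F}_t$; so conditioning on $\mathcal{F}_t$ is the same as conditioning on $x \defeq X(t)$. Next I would use the semigroup/no-jump decomposition: for small $\epsilon > 0$,
\begin{align*}
    G(h + \epsilon) = \P(T_t > h + \epsilon \mid X(t) = x) = \P(\Delta X(s) = 0 \text{ on } (t, t+h+\epsilon] \mid X(t) = x).
\end{align*}
Splitting the interval at $t + h$ and using the Markov property at time $t+h$ (where $X(t+h) = x$ on the event in question, since no jump has occurred), I get $G(h+\epsilon) = G(h) \cdot \P(\text{no jump on } (t+h, t+h+\epsilon] \mid X(t+h) = x)$.

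The key step is to evaluate the one-step no-jump probability using \cref{prop:uniformization} or directly \cref{def:CTMC}. The probability of \emph{some} jump away from $x$ in a window of length $\epsilon$ starting at time $t+h$ is $\sum_{z \neq x} u_{t+h}(z, x)\,\epsilon + o(\epsilon) = -u_{t+h}(x,x)\,\epsilon + o(\epsilon)$, using property (c) of the rate matrix $\sum_z u_{t+h}(z,x) = 0$. Hence the no-jump probability is $1 + u_{t+h}(x,x)\epsilon + o(\epsilon)$, giving
\begin{align*}
    G(h + \epsilon) - G(h) = G(h)\, u_{t+h}(x,x)\,\epsilon + o(\epsilon).
\end{align*}
Dividing by $\epsilon$ and letting $\epsilon \to 0^+$ yields the ODE $G'(h) = u_{t+h}(X(t), X(t))\, G(h)$ with initial condition $G(0) = 1$. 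Since $s \mapsto u_s(x,x)$ is continuous in $s$ (condition (a) of \cref{def:CTMC}), this linear ODE integrates to $G(h) = \exp\!\big(\int_t^{t+h} u_s(X(t),X(t))\,\dd s\big)$, which is the claim. (Note $u_s(X(t),X(t)) \le 0$, so $G$ is a genuine survival function.)

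The main obstacle is making the heuristic $o(\epsilon)$ estimates rigorous and uniform enough to pass to the derivative — in particular, justifying that the "no jump in $(t+h, t+h+\epsilon]$" probability is $1 + u_{t+h}(x,x)\epsilon + o(\epsilon)$ with an error term that is uniform over the relevant range of $h$. This is where the uniformization construction of \cref{prop:uniformization} is convenient: under it the number of Poisson arrivals in any window of length $\epsilon$ is $\mathrm{Poisson}(M\epsilon)$, so the probability of two or more arrivals is $O(\epsilon^2)$ uniformly, and the remainder $R_t = O(h^2)$ from \cref{prop:uniformization} controls the single-arrival term; one then takes right-derivatives and invokes continuity of the integrand to conclude. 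An alternative, fully rigorous route avoiding the ODE is to apply Dynkin's formula / the martingale property of the compensated measure $\tilde N(t, \mc{S}^\mc{D} \setminus \{x\})$ from \cref{prop:compensator} to the indicator process $\mathbbm{1}\{T_t > \cdot\}$, but the ODE argument above is the most transparent.
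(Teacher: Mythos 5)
Your argument is correct, but it takes a different route from the paper's. You derive the forward ODE $G'(h)=u_{t+h}(x,x)G(h)$, $G(0)=1$, for the survival function and integrate it; the paper instead writes $\{T_t>h\}=\bigcap_n E_n$ with $E_n=\{X(t)=X(t+\tfrac{h}{2^n})=\cdots=X(t+h)\}$, factors $\P(E_n\mid\mc{F}_t)$ into a product of one-step no-move probabilities via the Markov property, takes logarithms, and recognizes the resulting sum as a Riemann sum converging to $\int_t^{t+h}u_s(X(t),X(t))\,\dd s$ using the uniform $O((h/2^n)^2)$ remainder from \cref{prop:uniformization}. The two proofs rest on the same two ingredients — the pathwise Markov factorization and the uniformization bound on multi-jump events — and your Euler-type increment $G(h+\epsilon)=G(h)\,(1+u_{t+h}(x,x)\epsilon+o(\epsilon))$ is exactly one factor of the paper's product. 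What the paper's limit-of-products argument buys is that it never has to address differentiability of $G$: monotone convergence plus continuity of $s\mapsto u_s(x,x)$ closes the proof. Your route is arguably more transparent, but to be complete you must upgrade the right-derivative to a genuine derivative (e.g., note that $G$ is continuous and its right-derivative is continuous, or pass instead to the equivalent integral equation $G(h)=1+\int_0^h u_{t+r}(x,x)G(r)\,\dd r$ and invoke uniqueness), and you must make the $o(\epsilon)$ uniform in $h$ — both of which you correctly flag and which the uniformization bound does supply. One further caveat shared by both proofs: the conditioning on ``no jump in $(t+h,t+h+\epsilon]$'' given $\mc{F}_{t+h}$ uses a pathwise Markov property slightly stronger than the single-marginal statement in \cref{def:CTMC}; the paper's proof relies on the same strengthening when it factorizes $\P(E_n\mid\mc{F}_t)$, so you are on equal footing there.
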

\begin{proof}
Consider the event $E_n=\set{X(t)=X(t+\frac{h}{2^n})=\cdots=X(t+h)}$. Since $E_{n+1}\subseteq E_n$, by dominated convergence theorem and Markov property, we have
\begin{align*}
    &~\P(T_t>h|\mathcal{F}_t)\\
    =&~\P(\cap_n E_n|\mathcal{F}_t)=\lim_{n\to\infty}\P(E_n|\mathcal{F}_t)\\
    =&~\lim_{n\to\infty}\prod_{i=1}^{2^n}\parenBig{\P\parenBig{X(t+{\frac{i}{2^n}}h)=X(t)\Big|X(t+{\frac{i-1}{2^n}}h)=X(t),\mc{F}_t}}\\
    =&~\lim_{n\to\infty}\exp\parenBig{\sum_{i=1}^{2^n}\frac{h}{2^n}\frac{\log\P\parenBig{X(t+{\frac{i}{2^n}}h)=X(t)\Big|X(t+{\frac{i-1}{2^n}}h)=X(t),\mc{F}_t}-\log 1}{h/(2^n)}} \\
    =&~\lim_{n\to\infty}\exp\parenBig{\sum_{i=1}^{2^n}\frac{h}{2^n}\frac{u_{t+{\frac{i-1}{2^n}}h}(X(t),X(t)){\frac{h}{2^n}}+R_i}{h/(2^n)}}\\
    =&~\exp\parenBig{\int_t^{t+h}u_s(X(t),X(t))\dd s},
\end{align*}
where the remainder $R_i\leq C(h/2^n)^2$ for some constant $C$ not depending on $i$ by \cref{prop:uniformization}. 
\end{proof}
\begin{lemma}[Itô's Formula]\label{lemma:ito formula}
Consider the random measure $N(t,A)$ associated with a CTMC $X(t)$, which is defined in \cref{def:random measure}. Define a stochastic integral $W(t)=W(0)+\int_0^t\int_A K(s,x)N(ds,dx)$, where $K$ is a predictable process and $A\subseteq\mc{S}^\mc{D}$.  For each $f\in C(\R), t\ge0$, we have 
\begin{align*}
    f(W(t))-f(W(0))=\int_0^t\int_A\brac{f(W(s-)+K(s,x))-f(W(s-))}N(ds,dx).
\end{align*}
\end{lemma}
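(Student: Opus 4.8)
The plan is to exploit the fact that, under the uniformization construction, $W$ is a \emph{pure jump} process with only finitely many jumps on any compact time interval, so that the formula reduces to a telescoping sum over those jump times; in particular only continuity of $f$ is needed, in contrast to the quadratic-variation correction in the classical It\^o formula for diffusions.

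First I would record the path structure of $W$. Since $X(t)$ is built by uniformization from a Poisson process $N(t)$, we have $N(t,A)\le N((0,t],\mc{S}^\mc{D})\le N(t)<\infty$ almost surely (as noted after \cref{def:random measure}); hence on $[0,t]$ there are almost surely only finitely many times $0<\sigma_1<\dots<\sigma_m\le t$ with $\Delta X(\sigma_k)\neq 0$ and $X(\sigma_k)\in A$. By the very definition of the integral against $N$, $W(s)=W(0)+\sum_{k:\,\sigma_k\le s}K(\sigma_k,X(\sigma_k))$, so $W$ is constant on each of $[0,\sigma_1),[\sigma_1,\sigma_2),\dots,[\sigma_m,t]$, is right-continuous with left limits, and satisfies $\Delta W(\sigma_k)=W(\sigma_k)-W(\sigma_k-)=K(\sigma_k,X(\sigma_k))$. (Predictability of $K$ is invoked only to make the integral well-defined; I would mention it but not dwell on it.)

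Next I would apply the telescoping identity. Setting $\sigma_0:=0$, constancy of $W$ between consecutive $\sigma$'s gives $W(\sigma_k-)=W(\sigma_{k-1})$ for $k=1,\dots,m$ and $W(t)=W(\sigma_m)$, so that
\[
f(W(t))-f(W(0))=\sum_{k=1}^m\brac{f(W(\sigma_k))-f(W(\sigma_k-))}=\sum_{k=1}^m\brac{f\paren{W(\sigma_k-)+K(\sigma_k,X(\sigma_k))}-f(W(\sigma_k-))},
\]
where the last equality uses $W(\sigma_k)=W(\sigma_k-)+K(\sigma_k,X(\sigma_k))$.

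Finally I would identify the right-hand side with the claimed integral: since each $\sigma_k$ has $X(\sigma_k)\in A$, the displayed sum is exactly $\int_0^t\int_A\brac{f(W(s-)+K(s,x))-f(W(s-))}N(\dd s,\dd x)$, by the same description of the random measure $N$ used to define $W$ (the integrand $(s,x)\mapsto f(W(s-)+K(s,x))-f(W(s-))$ is predictable because $W(\cdot-)$ is left-continuous, $K$ is predictable, and $f$ is continuous). This yields the formula. The only step deserving care is the almost-sure finiteness of the jump set on $[0,t]$, which legitimizes the finite telescoping sum; beyond that the argument is bookkeeping, and there is no analogue of the hard second-order term precisely because $W$ has no continuous part.
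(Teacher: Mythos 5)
Your proof is correct and follows essentially the same route as the paper's: both arguments telescope $f(W(\cdot))$ over the (almost surely finitely many) jump times of $N(\cdot,A)$ in $[0,t]$ and identify the resulting sum with the integral against $N$. Your version is slightly more explicit about why the jump set is finite (via the uniformization bound $N(t,A)\le N(t)<\infty$), which the paper leaves implicit.
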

\begin{proof}
This proof is similar to Lemma 4.4.5 in \cite{applebaum2009levy}. Let $T_0^A=0$ and $T_n^A=\inf\set{t>T_{n-1}^A:\Delta N(t,A)\neq 0}$. Note that 
    \begin{align*}
        f(W(t))-f(W(0))=&~\sum_{n=1}^\infty f(W(t\wedge T_n^A))-f(W(t\wedge T_{n}^A-))\\
        =&~\sum_{n=1}^\infty\brac{f(W(t\wedge T_n^A-)+K(t\wedge T_n^A, X(t\wedge T_n^A)))-f(W(t\wedge T_n^A-))}\\
        =&~\int_0^t\int_A\brac{f(W(s-)+K(s,x))-f(W(s-))}N(ds,dx),
    \end{align*}
     which completes the proof.
\end{proof}

\begin{lemma}[Exponential Martingale]\label{lemma:exponential martingale}
    Suppose that $N(t,A)$ is the random measure associated with the CTMC $X(t)$. Consider the following stochastic integral $$W(t)=W(0)+\int_0^t\sum_{x\neq X(s-)}F(s,x)\dd s+\int_0^t\int_{x\in\mc{S}^\mc{D}} K(s,x)N(\dd s,\dd x).$$ Then $\exp(W(t))$ is an exponential martingale if for each $x\in\mc{S}^\mc{D}$, $$F(t,x)=-(e^{K(t,x)}-1)u_t(x,X(t-))\mathbbm{1}({x\neq X(t-)}).$$ 
\end{lemma}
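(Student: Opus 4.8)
The plan is to show that the process $M(t) = \exp(W(t))$ is a local martingale by applying Itô's formula for the random measure $N$ (\cref{lemma:ito formula}), and then upgrade to a genuine martingale via the uniform boundedness coming from \cref{prop:uniformization}. First I would split $W(t) = W(0) + A(t) + J(t)$ into its absolutely continuous part $A(t) = \int_0^t \sum_{x \neq X(s-)} F(s,x)\,\dd s$ and its pure-jump part $J(t) = \int_0^t \int_{\mc{S}^\mc{D}} K(s,x) N(\dd s, \dd x)$. Since $A(t)$ has continuous paths of finite variation and $J(t)$ is a finite sum of jumps (as $N(t, \mc{S}^\mc{D}) \leq N(t) < \infty$ a.s. by the uniformization construction), I can compute $\dd \exp(W(t))$ by treating the continuous drift with the ordinary chain rule and the jumps with \cref{lemma:ito formula} applied to $f = \exp$.

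The key computation is: between jumps, $\dd \exp(W(t)) = \exp(W(t))\, \dd A(t) = \exp(W(t)) \sum_{x \neq X(t-)} F(t,x)\,\dd t$, while at a jump time with $X(t-) \to X(t)$, the increment is $\exp(W(t-))\big(e^{K(t, X(t))} - 1\big)$. Assembling these, and using \cref{prop:compensator} to write $N(\dd t, \dd x) = \tilde N(\dd t, \dd x) + \sum_{x \neq X(t-)} u_t(x, X(t-)) \mathbbm{1}(x \neq X(t-))\,\dd t$, I get
\begin{align*}
    \dd \exp(W(t)) = \exp(W(t-)) \int_{\mc{S}^\mc{D}} \big(e^{K(t,x)} - 1\big) \tilde N(\dd t, \dd x) + \exp(W(t-)) \Big[ \sum_{x \neq X(t-)} F(t,x) + \sum_{x \neq X(t-)} \big(e^{K(t,x)} - 1\big) u_t(x, X(t-)) \Big] \dd t.
\end{align*}
The choice $F(t,x) = -(e^{K(t,x)} - 1) u_t(x, X(t-)) \mathbbm{1}(x \neq X(t-))$ is precisely what kills the $\dd t$ term, leaving $\exp(W(t))$ as a stochastic integral against the compensated measure $\tilde N$, hence a local martingale with $\E[\exp(W(0))]$-preserving (here $= 1$) expectation.

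To promote the local martingale to a true martingale, I would invoke the boundedness built into the hypotheses: under \cref{prop:uniformization} the rates are bounded by $M$, the jump count up to any finite time is dominated by a Poisson$(M)$ variable, and (on the relevant domain) $e^{K(t,x)} = u_t^Y/u_t^X$ and hence the integrand $e^{K(t,x)} - 1$ is bounded; a standard argument (e.g. a localizing sequence of stopping times $\tau_k = \inf\{t : N(t) \geq k\}$ together with a uniform $L^1$ or $L^2$ bound on $\exp(W(t \wedge \tau_k))$ via the explicit product formula for $\exp(W)$ over jumps, or Novikov-type domination) shows the local martingale is uniformly integrable on $[0, T]$, so it is a martingale. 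The main obstacle I expect is making this last localization/uniform-integrability step fully rigorous: one must be careful that $\exp(W(t))$ can be large when many jumps accumulate, so I would express $\exp(W(t))$ explicitly as $\exp(A(t)) \prod_{n : T_n \leq t} e^{K(T_n, X(T_n))}$, bound $\exp(A(t))$ using boundedness of $F$ and $u_t$, bound each jump factor, and control the product using the Poisson tail on the number of jumps — the bookkeeping here, rather than any deep idea, is where the care is needed.
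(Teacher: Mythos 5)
Your proof is correct and follows essentially the same route as the paper's: compute $\dd\exp(W(t))$ via the chain rule on the drift part plus the jump formula of \cref{lemma:ito formula}, observe that the stated choice of $F$ exactly cancels the compensator term from \cref{prop:compensator}, and conclude that $\exp(W(t))$ is a stochastic integral against $\tilde N$. The only difference is that you explicitly address the local-martingale-to-true-martingale upgrade via localization and the Poisson domination of the jump count — a step the paper's proof leaves implicit but which is justified here by the boundedness of the rates and of $e^{K}-1$ on the finite state space.
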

\begin{proof}
    By Itô's product formula \citep[e.g., Theorem 4.4.13 in][]{applebaum2009levy} and Itô's formula (\cref{lemma:ito formula}), we have
    \begin{align*}
        \dd\brac{\exp(W(t))}=&~\exp\paren{W(t)}\sum_{x\neq X(t-)}F(t,x)\dd t+\exp\paren{W(t-)}\int_{x\in\mc{S}^\mc{D}}(e^{K(t,x)}-1)N(\dd t,\dd x)\\
        =&~\exp(W(t-))\sum_{x\neq X(t-)}\setBig{F(t,x)\dd t+(e^{K(t,x)}-1)N(\dd t,x)}\\
        =&~\exp(W(t-))\sum_{x\neq X(t-)}\setBig{(e^{K(t,x)}-1)\tilde{N}(\dd t,x)},
    \end{align*}
    which completes the proof by \cref{prop:compensator}.
\end{proof}
\begin{lemma}\label{lemma:RN}
    For any random variable $Z\in\mc{F}_{t+h}$, we have
    \begin{align*}
        \E^Y(Z|\mc{F}_t)=\frac{\E^X\brac{\frac{\dd \P^Y}{\dd \P^X}\Big|_{t+h}Z|\mc{F}_t}}{\frac{\dd \P^Y}{\dd \P^X}\Big|_{t}}.
    \end{align*}
\end{lemma}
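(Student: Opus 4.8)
The plan is to verify the claimed formula by checking the defining property of $\E^Y(\,\cdot\mid\mc{F}_t)$. Write $L_s=\frac{\dd\P^Y}{\dd\P^X}\big|_s=\exp(W(s))$; by \cref{lemma:exponential martingale} this is a strictly positive $\P^X$-martingale with $\E^X L_s=1$, so in particular $L_t=\E^X(L_{t+h}\mid\mc{F}_t)$ $\P^X$-a.s. Define $\xi:=L_t^{-1}\,\E^X(L_{t+h}Z\mid\mc{F}_t)$, which is well defined and $\mc{F}_t$-measurable since $L_t>0$ everywhere (hence $\P^X$- and $\P^Y$-a.s.). Assuming $Z$ is $\P^Y$-integrable — implicit in the statement, and equivalent to $\E^X(L_{t+h}\abs{Z})<\infty$ — it suffices to show $\E^Y(\mathbbm{1}_B Z)=\E^Y(\mathbbm{1}_B\xi)$ for every $B\in\mc{F}_t$, which together with $\mc{F}_t$-measurability characterizes $\E^Y(Z\mid\mc{F}_t)$.

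First I would move from $\P^Y$ to $\P^X$ on $\mc{F}_{t+h}$: since $\mathbbm{1}_B Z$ is $\mc{F}_{t+h}$-measurable and $\frac{\dd\P^Y}{\dd\P^X}\big|_{t+h}=L_{t+h}$, we get $\E^Y(\mathbbm{1}_B Z)=\E^X(L_{t+h}\mathbbm{1}_B Z)$. Next, because $\mathbbm{1}_B$ is $\mc{F}_t$-measurable, pulling it out of the $\P^X$-conditional expectation gives $\E^X(L_{t+h}\mathbbm{1}_B Z)=\E^X\!\big(\mathbbm{1}_B\,\E^X(L_{t+h}Z\mid\mc{F}_t)\big)=\E^X(\mathbbm{1}_B L_t\xi)$, the last step by the definition of $\xi$. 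Finally I would move back from $\P^X$ to $\P^Y$, now on the smaller $\sigma$-algebra $\mc{F}_t$: the random variable $\mathbbm{1}_B\xi$ is $\mc{F}_t$-measurable and $\frac{\dd\P^Y}{\dd\P^X}\big|_t=L_t$, so $\E^X(\mathbbm{1}_B L_t\xi)=\E^Y(\mathbbm{1}_B\xi)$. Combining the three identities yields $\E^Y(\mathbbm{1}_B Z)=\E^Y(\mathbbm{1}_B\xi)$ for all $B\in\mc{F}_t$, hence $\E^Y(Z\mid\mc{F}_t)=\xi=L_t^{-1}\E^X(L_{t+h}Z\mid\mc{F}_t)$ $\P^Y$-a.s.

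This is just the abstract Bayes formula specialized to the filtration $(\mc{F}_s)_{s\ge0}$, so no CTMC-specific machinery is needed beyond what is already in place. The only delicate points are measure-theoretic bookkeeping: (i) consistency of the restricted Radon--Nikodym derivatives, which is precisely the martingale identity $L_t=\E^X(L_{t+h}\mid\mc{F}_t)$ supplied by \cref{lemma:exponential martingale}; (ii) strict positivity of $L_t$, so that dividing by $L_t$ is harmless under both measures; and (iii) integrability of $Z$ under $\P^Y$ (which also gives integrability of $\xi$, since $\E^Y\abs{\xi}\le\E^X(L_{t+h}\abs{Z})=\E^Y\abs{Z}<\infty$), guaranteeing every conditional expectation above is well defined. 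I expect the bookkeeping in (ii) and (iii) to be the only place requiring care, and it is routine.
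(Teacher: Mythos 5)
Your proof is correct and follows essentially the same route as the paper's: change measure from $\P^Y$ to $\P^X$ on $\mc{F}_{t+h}$, apply the tower property to pull out the $\mc{F}_t$-measurable indicator, then change back to $\P^Y$ on $\mc{F}_t$, and invoke the defining property of conditional expectation. The only difference is that you make explicit the strict positivity of $L_t$ and the integrability bookkeeping, which the paper leaves implicit.
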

\begin{proof}
    For any set $A\in\mc{F}_t$, we have
    \begin{align*}
\E^Y(Z\mathbbm{1}_A)=&~\E^X\bracBig{\mathbbm{1}_A\E^X(\frac{\dd \P^Y}{\dd \P^X}\Big|_{t+h}Z|\mc{F}_t)}\\
        =&~\E^Y\bracBig{\frac{\mathbbm{1}_A\E^X(\frac{\dd \P^Y}{\dd \P^X}\Big|_{t+h}Z|\mc{F}_t)}{\frac{\dd \P^Y}{\dd \P^X}\Big|_{t}}},
        \end{align*}
        which completes the proof by the definition of the R-N derivative.
\end{proof}

\begin{lemma}[Theorem 3.3 in \cite{jiao2023deep}]\label{lemma:approximation error}
Assume that $f\in\mc{H}^\beta([0,1]^d,B_0)$ with $\beta=s+r, s\in\N$ and $r\in(0,1]$. For any $S_1,S_2\in\N^+$, there exists a function $\phi_0$ implemented by a ReLU network with width $\mathrm{W}=38(\lfloor \beta\rfloor+1)^2d^{\lfloor \beta\rfloor+1}S_2\lceil\log_2(8S_2)\rceil$ and depth $\mathrm{D}=21(\lfloor\beta\rfloor+1)^2S_1\lceil\log_2(8S_1)\rceil$ such that
\begin{align*}
    \abs{f(x)-\phi_0(x)}\leq 18B_0(\lfloor\beta\rfloor+1)^2d^{\lfloor\beta\rfloor+(\beta\vee1)/2}(S_1S_2)^{-2\beta/d},
\end{align*}
for all $x\in[0,1]^d\bsl \Omega([0,1]^d,K,\delta)$, where
\begin{align*}
    \Omega([0,1]^d,K,\delta)=\cup_{i=1}^d\set{x=[x_1,x_2,\dots,x_d]^\top:x_i\in\cup_{k=1}^{K-1}(k/K-\delta,k/K)},
\end{align*}
with $K=\lfloor (S_1S_2)^{2/d}\rfloor$ and $\delta$ an arbitrary number in $(0,1/(3K)]$
    
\end{lemma}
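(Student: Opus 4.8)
The statement is quoted verbatim as Theorem 3.3 of \cite{jiao2023deep}, so in principle one may simply invoke that reference; below I sketch the constructive argument one would reproduce. The plan is the standard ``Taylor expansion on a uniform grid plus bit extraction'' scheme for deep ReLU approximation. Fix $K=\lfloor (S_1S_2)^{2/d}\rfloor$ and partition $[0,1]^d$ into the $K^d$ closed subcubes $Q_m=\prod_{i=1}^d[m_i/K,(m_i+1)/K]$ indexed by $m=(m_1,\dots,m_d)\in\{0,\dots,K-1\}^d$. The set $\Omega([0,1]^d,K,\delta)$ removed from the domain is exactly the union of the thin slabs $\{x:x_i\in(k/K-\delta,k/K)\}$ around the separating hyperplanes, so outside it every point lies a definite distance $\delta$ from the cube faces. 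This is precisely what is needed so that a shallow ReLU sub-network can implement a piecewise-constant \emph{indexing map} $\psi$ returning, without any error on $[0,1]^d\bsl\Omega$, the label $m$ of the cube containing $x$: a step function has jumps on the hyperplanes $x_i=k/K$, and the trifling region buffers them.

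Next, on each $Q_m$ one replaces $f$ by its degree-$s$ Taylor polynomial about the corner $x_m=m/K$, namely $P_m(x)=\sum_{\lone{\alpha}\le s}\frac{\partial^\alpha f(x_m)}{\alpha!}(x-x_m)^\alpha$. Using that $Q_m$ has $\ell^2$-diameter $\sqrt d/K$, that the number of multi-indices of order at most $s$ is $O(d^{\lfloor\beta\rfloor})$, and the $r$-Hölder bound on the $s$-th derivatives, one obtains the pointwise remainder estimate $\abs{f(x)-P_m(x)}\lesssim B_0(\lfloor\beta\rfloor+1)^2 d^{\lfloor\beta\rfloor+(\beta\vee1)/2}K^{-\beta}$ for $x\in Q_m$; with the chosen $K$ this is already of the target order $(S_1S_2)^{-2\beta/d}$ up to the stated constant. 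It therefore suffices to build a single ReLU network $\phi_0$ that evaluates $P_{\psi(x)}(x)$ to comparable accuracy.

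The network $\phi_0$ is assembled from three reusable gadgets: (i) a ReLU sub-network approximating the squaring map $t\mapsto t^2$ on $[0,1]$ to error $2^{-2L}$ via Yarotsky's sawtooth, hence, through the polarization identity $uv=\tfrac14[(u+v)^2-(u-v)^2]$ and $O(\log s)$ iterated multiplications, a sub-network approximating every monomial $(x-x_m)^\alpha$ with $\lone{\alpha}\le s$; (ii) the indexing map $\psi$ above; and (iii) a \emph{bit-extraction} sub-network storing all the quantized Taylor coefficients $\{\partial^\alpha f(x_m)/\alpha!\}$ --- there are $O(d^{\lfloor\beta\rfloor})$ multi-indices times $K^d$ cubes of them --- packed into $O(S_2)$ real parameters, which reads off the coefficients belonging to the cube $\psi(x)$ using a decoding block of depth $O(S_1)$. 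Bit extraction is the device that makes the width parameter $S_2$ and the depth parameter $S_1$ enter the rate multiplicatively: a layer of width $\sim S_2$ holding $\sim S_2$ packed reals, decoded in depth $\sim S_1$, yields effective grid resolution $K\sim(S_1S_2)^{2/d}$. One then tallies the gadget sizes and uses the factors $\lceil\log_2(8S_i)\rceil$ to absorb the logarithmic overhead of the sawtooth and decoding blocks, obtaining total width $\le 38(\lfloor\beta\rfloor+1)^2 d^{\lfloor\beta\rfloor+1}S_2\lceil\log_2(8S_2)\rceil$ and depth $\le 21(\lfloor\beta\rfloor+1)^2 S_1\lceil\log_2(8S_1)\rceil$.

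Finally, for $x\notin\Omega$ one splits $\abs{f(x)-\phi_0(x)}\le\abs{f(x)-P_{\psi(x)}(x)}+\abs{P_{\psi(x)}(x)-\phi_0(x)}$: the first term is the Taylor remainder from the second step, and the internal precision parameters (the number of sawtooth iterations $L$ and the number of stored bits) are chosen so that the second term is no larger, which yields the claimed bound $18B_0(\lfloor\beta\rfloor+1)^2 d^{\lfloor\beta\rfloor+(\beta\vee1)/2}(S_1S_2)^{-2\beta/d}$. I expect the main obstacle to be the bit-extraction construction together with the attendant width/depth bookkeeping: packing the $K^d$ local coefficient vectors into $O(S_2)$ parameters, decoding them in depth $O(S_1)$ without exceeding the stated constants, and checking that this is exactly what upgrades the naive rate $N^{-\beta/d}$ to $(S_1S_2)^{-2\beta/d}$, is the delicate part; the Taylor estimate and the monomial sub-network are by comparison routine.
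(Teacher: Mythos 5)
The paper gives no proof of this lemma at all: it is imported verbatim as Theorem~3.3 of \cite{jiao2023deep}, so the ``paper's proof'' is just that citation, which your proposal correctly identifies as sufficient. Your accompanying sketch (uniform grid with the trifling region $\Omega$, local Taylor polynomials, Yarotsky-type squaring/product gadgets, and bit extraction to get the $(S_1S_2)^{-2\beta/d}$ rate with the stated width/depth) matches the construction actually used in that reference, so nothing further is needed here.
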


\begin{lemma}[Theorem 12.2 in \cite{anthony2009neural}]\label{lemma:pseudo dimension} Let $\mc{G}$ be a set of real functions that map a domain $\mc{X}$ to a bounded interval $[0,B]$. The pseudo-dimension $\text{Pdim}(\mc{G})$ of $\mc{G}$ is defined as the largest integer $m$ for which there exists $(x_1,\dots,x_m,y_1,\dots,y_m)\in\mc{X}\times \R^m$ such that for any $(b_1,\dots,b_m)\in\set{0,1}^m$ there exists $f\in\mc{G}$ such that $f(x_i)>y_i$ if and only if $b_i=1$ for any $i\in\brac{m}$. Then, for $n\ge\text{Pdim}(\mc{G})$ and $B\ge\eps$, we have
\begin{align*}
    \mc{N}_n(\eps,\mc{G},L^\infty)\leq\parenBig{\frac{eBn}{\eps\text{Pdim}(\mc{G})}}^{\text{Pdim}(\mc{G})}.
\end{align*}
    
\end{lemma}

\begin{lemma}[Theorem 7 in \cite{bartlett2019nearly}]\label{lemma:ReLu} Let $\mc{G}$ be a ReLU neural network function class with depth $L$ and number of parameters $S$. Then, there exists a universal constant $C$ such that
\begin{align*}
    \text{Pdim}(\mc{G})\leq CSL\log S.
\end{align*}
    
\end{lemma}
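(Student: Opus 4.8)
The plan is to bound $\mathrm{Pdim}(\mc{G})$ by the standard route: for fixed inputs $x_1,\dots,x_n$ and thresholds $y_1,\dots,y_n$, control the number $N$ of distinct sign vectors $\big(\mathrm{sign}(f(x_i;\theta)-y_i)\big)_{i=1}^n$ realizable as the $S$-dimensional parameter vector $\theta$ ranges over the weight space; since shattering $n$ points requires $N\ge 2^n$, an upper bound on $N$ that is subexponential in $n$ forces $n\le C S L\log S$. The combinatorial engine is the classical bound (Warren; Milnor--Thom) that a family of $m$ real polynomials of degree $\le d$ in $k$ variables induces at most $2(2emd/k)^k$ sign-consistent regions of $\R^k$ when $m\ge k$. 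The task is to set things up so this bound applies, which requires the network output to be \emph{piecewise polynomial of low degree} in $\theta$ — true precisely when every ReLU's activation pattern on every input is frozen.

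Concretely, I would build a partition of $\R^S$ layer by layer, maintaining the invariant that after processing layer $\ell$ the partition has $N_\ell$ regions and, within each region, the pre-activation of every unit in layers $1,\dots,\ell$ is a polynomial in the relevant parameters of degree at most $\ell$. The degree grows only linearly in $\ell$: within a region ReLU acts as multiplication by a fixed $0/1$ mask, so composing layer $(\ell{+}1)$'s affine map with layer $\ell$'s post-activations raises the degree by exactly one. To refine from layer $\ell{-}1$ to layer $\ell$, cut each current region by the $n k_\ell$ zero sets of the layer-$\ell$ pre-activations (one per input, per unit of width $k_\ell$), which are polynomials of degree $\le \ell$ in the $W_\ell$ parameters occurring in layers $1,\dots,\ell$; by the sign-pattern bound this multiplies the region count by at most $(2e n k_\ell \ell / W_\ell)^{W_\ell}$. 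Finally, inside each of the $N_L$ terminal regions the indicator $\mathbbm{1}\{f(x_i;\theta)\ge y_i\}$ is the sign of a degree-$\le L$ polynomial in $\theta$, contributing one last factor $(2enL/S)^{S}$.

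Multiplying, $N\le \prod_{\ell=1}^{L}(2e n k_\ell \ell / W_\ell)^{W_\ell}\cdot (2enL/S)^{S}$. Taking logarithms, using $W_\ell\le S$, $k_\ell\le S$, $\ell\le L$, $\sum_\ell W_\ell\le L S$, and $L\le S$, yields $\log N \le c\, S L \log(n S L)$ for a universal $c$. If $n$ points were shattered we would need $2^n\le N$, i.e. $n\le c\, S L\log(n S L)$; plugging the trial value $n\asymp S L\log S$ and noting $\log(nSL)=O(\log S)$ (again since $L\le S$) makes the inequality fail for a suitably large constant, so no set of size $C S L\log S$ can be shattered, giving $\mathrm{Pdim}(\mc{G})\le C S L\log S$.

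The main obstacle is the bookkeeping in the inductive layer-wise refinement: verifying that activation patterns are genuinely constant on each refined region, that the degree grows only linearly (not exponentially, as a naive composition estimate would suggest) in the layer index, and that the correct counts of polynomials and of free variables enter the sign-pattern bound at each stage. Once the Warren--Milnor--Thom-type estimate is in hand, the product bound, the logarithmic simplification, and the inversion of the self-referential inequality $n\le c\,SL\log(nSL)$ are all routine.
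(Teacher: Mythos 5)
Your sketch is essentially the proof of the cited result itself: the paper does not prove this lemma but imports it from Bartlett, Harvey, Liaw and Mehrabian (Theorem 7), whose argument is exactly the layer-wise partition of parameter space into regions where each pre-activation is a polynomial of degree growing linearly with depth, combined with the Warren--Milnor--Thom sign-pattern bound and the final inversion of $n\le c\,SL\log(nSL)$. Apart from routine care with the $m\ge k$ hypothesis in the sign-pattern lemma and the explicit constants, your reconstruction is correct and takes the same route as the source.
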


\section{Proof of Results for CTMC}\label{sec:proof CTMC}
\subsection{Proof of \cref{prop:uniformization}}
\begin{proof}
    By construction, $X(t)$ has Markov property since the Poisson process $N(t)$ has independent increments. By conditioning argument \citep[e.g., Theorem 3.7.9 in][]{durrett2019probability} and the definition of Poisson processes, for $z\neq x$, we have
    {\footnotesize\begin{align*}
        \P(X(t+h)=z|X(t)=x)=&~Mh\exp(-Mh)\P(X(t+h)=z|X(t)=x,N(t+h)-N(t)=1)+O(h^2)\\
        =&~Mh\exp(-Mh)\times\parenBig{\int_0^h\underbrace{\dfrac{1}{h}}_{\text{jump location: uniform distribution}}\frac{u_{t+\theta}(z,x)}{M}\dd \theta}+O(h^2)\\
        =&~u_t(z,x)h+R_t,
    \end{align*}}
where the remainder $R_t$ satisfying
\begin{align*}
    R_t\leq&~Lh^2\exp(-Mh)+\P(N(t+h)-N(t)> 1,X(t+h)=z|X(t)=x)\\
    \leq&~\exp(-Mh)\brac{Lh^2+\exp(Mh)-1-Mh}\leq (M^2+L)h^2=O(h^2).
\end{align*}
Consequently, since the state space is finite, we have
\begin{align*}
        \P(X(t+h)=x|X(t)=x)=1-\sum_{z\neq x}\P(X(t+h)=z|X(t)=x)=1+u_t(x,x)h+O(h^2),
    \end{align*}
    which completes the proof.
\end{proof}

\subsection{Proof of \cref{prop:Kolmogorov equation}}
\begin{proof}
    By \cref{def:CTMC}, we have
    \begin{align*}
        p_{t+h}(x)-p_t(x)=\sum_{z\in\mc{S}^\mc{D}}(p_{t+h|t}(x|z)-\delta_z(x))p_t(z)=\sum_{z\in\mc{S}^\mc{D}}(u_t(x,z)h+o(h))p_t(z).
    \end{align*}
    Then,
    \begin{align*}
        \dot{p}_t(x)=\lim_{h\to 0^+}\frac{p_{t+h}(x)-p_t(x)}{h}=\sum_{z\in\mc{S}^\mc{D}}u_t(x,z)p_t(z),
    \end{align*}
    which completes the proof.
\end{proof}

\subsection{Proof of \cref{prop:compensator}}
\begin{proof}
In this proof, we consider the event $\set{X(t)=x}$ if we are given $\mc{F}_t$. Following the proof of \cref{prop:uniformization}, conditioning on $\mc{F}_t$, since the Poisson process $N(t)$ has independent increments, we have
\begin{align*}
    &~\P(N(t+h,A)-N(t,A)=1|\mc{F}_{t})\\
    \leq&~\P(N(t+h)-N(t)=1)\sum_{z\in A}\P(X(t+h)=z,X(t)\neq z|N(t+h)-N(t)=1,\mc{F}_t)\\
    &~+\P(N(t+h)-N(t)>1)\\
    =&~\sum_{z\in A}u_t(z,X(t))\mathbbm{1}(z\neq X(t))h+R_t,
\end{align*}
where $R_t$ is bounded by $\abs{A}(M^2+L)h^2$.
Since the rate of $X(t)$ is bounded above by $M$, by uniformization, we have
\begin{align*}
    &~\E\bracBig{\setBig{N(t+h,A)-N(t,A)}\mathbbm{1}(N(t+h,A)-N(t,A)\ge 2)\Big|\mc{F}_{t}}\\
    \leq&~\E\bracBig{\setBig{N(t+h)-N(t)}\mathbbm{1}(N(t+h)-N(t)\ge 2)\Big|\mc{F}_{t}}\\
    =&~ Mh-(Mh\exp(-Mh))\leq M^2h^2.
\end{align*}
Then,
\begin{align*}
    \E\bracBig{N(t+h,A)-N(t,A)|\mc{F}_{t}}=\sum_{z\in A}u_t(z,X(t))\mathbbm{1}(z\neq X(t))h+R.
\end{align*}
Here, the remainder $R$ is bounded by $M^2h^2+\abs{A}(M^2+L)h^2$. Then, since $X(t)$ has only finite jumps in $[s,t]$ almost surely, by dominated convergence theorem, we have
\begin{align*}
    &~\E\bracBig{N(t,A)-N(s,A)\Big|\mc{F}_{s}}\\
    =&~\sum_{i=1}^n\E\bracBig{\E\brac{N(s+\frac{i}{n}(t-s),A)-N(s+\frac{i-1}{n}(t-s),A)|\mc{F}_{s+\frac{i-1}{n}(t-s)}}\Big|\mc{F}_{s}}\\
    =&~\sum_{i=1}^n\E\bracBig{\sum_{z\in A}u_{s+\frac{i-1}{n}(t-s)}(z,X(s+\frac{i-1}{n}(t-s)))\mathbbm{1}(z\neq X(s+\frac{i-1}{n}(t-s)))\frac{1}{n}(t-s)\Big|\mc{F}_s}\\
    &~+O(\frac{1}{n})\\
    \to&~\E\brac{\int_{s}^{t}\sum_{z\in A}u_r(z,X(r-))\mathbbm{1}(z\neq X(r-))\dd r|\mc{F}_s},
\end{align*}
as $n\to \infty$. Thus, the process
\begin{align*}
    \tilde{N}(t,A)=N(t,A)-\int_0^t\underbrace{\sum_{z\in A}u_s(z,X(s-))\mathbbm{1}(z\neq X(s-))}_{\text{predictable process}}\dd s
\end{align*}
is a (martingale-valued) compensated random measure of the random measure $N(t,A)$. We are done.

\end{proof}

\subsection{Proof of \cref{thm:change of measure}}
\begin{proof}
By computing directly, we can obtain that
{ \begin{align*}
        \frac{\dd \P^Y}{\dd \P^X}\Big|_t=\exp\setBig{\int_0^t\sum_{x\neq X(s-)}\brac{u_s^X(x,X(s-))-u_s^Y(x,X(s-))}\dd s+\int_0^t\sum_{x\neq X(s-)} \log \frac{u_s^Y(x,X(s-))}{u_s^X(x,X(s-))}N(\dd s,x)}.
    \end{align*}}
Next, we divide the proof into two parts: first we prove that $\tilde{N}^Y(t,A)$ is a $\P^Y$-martingale, and then we conclude that $X(t)$ is a $\P^Y$-CTMC with rate $u^Y_t$.

\noindent \underline{\bf $\tilde{N}^Y(t,A)$ is a $\P^Y$-martingale.} By Itô's product formula \citep[see Theorem 4.4.13 in][]{applebaum2009levy} and \cref{lemma:exponential martingale}, we have
\begin{align*}
    \dd\brac{\tilde{N}^Y(t,A)\exp(W(t))}=&~\tilde{N}^Y(t-,A)\exp(W(t-))\sum_{x\neq X(t-)}\setBig{(e^{K(t,x)}-1)\tilde{N}^X(\dd t,x)}\\
    &~+\exp(W(t-))\tilde{N}^Y(\dd t,A)\\
    &~+\underbrace{\exp(W(t-))\sum_{x\neq X(t-):x\in A}\setBig{(e^{K(t,x)}-1)N(\dd t,x)}}_{\text{quadratic variation}}\\
    =&~\tilde{N}^Y(t-,A)\exp(W(t-))\sum_{x\neq X(t-)}\setBig{(e^{K(t,x)}-1)\tilde{N}^X(\dd t,x)}\\
    &~+\exp(W(t-))\tilde{N}^X(\dd t,A)\\
    &~+\exp(W(t-))\sum_{x\neq X(t-):x\in A}\setBig{(e^{K(t,x)}-1)N(\dd t,x)}\\
    &~-\exp(W(t-))\sum_{x\neq X(t-):x\in A}\bracBig{\frac{u^Y_t(x,X(t-))}{u^X_t(x,X(t-))}-1}u^X_t(x,X(t-))\dd t\\
    =&~\tilde{N}^Y(t-,A)\exp(W(t-))\sum_{x\neq X(t-)}\setBig{\frac{u^Y_t(x,X(t-))}{u^X_t(x,X(t-))}-1)\tilde{N}^X(\dd t,x)}\\
    &~+\exp(W(t-))\sum_{x\neq X(t-):x\in A}\frac{u^Y_t(x,X(t-))}{u^X_t(x,X(t-))}\tilde{N}^X(\dd t,x),
\end{align*}
which implies that $\tilde{N}^Y(t,A)\exp(W(t))$ is a $\P^X$-martingale. By Lemma 5.2.11 in \cite{applebaum2009levy}, $\tilde{N}^Y(t,A)$ is a $\P^Y$-martingale.

\noindent \underline{\bf $X(t)$ is a $\P^Y$-CTMC with rate $u^Y_t$.} Considering $Z=\mathbbm{1}(X(t+h)=z)$ in \cref{lemma:RN}, we can obtain that 
\begin{align*}
    \P^Y(X(t+h)=z|\mc{F}_t)=&~\E^X\brac{\exp(W(t+h)-W(t))\mathbbm{1}(X(t+h)=z)|\mc{F}_t}.
\end{align*}
Since the right-hand side in above equation only depends on $X(t)$, thus $X(t)$ has Markov property under $\P^Y$. It suffices to show that $\P^Y(X(t+h)=z|\mc{F}_t)=u_t(z,X(t))h+o(h)$ for any $z\neq X(t)$.

    Since $\exp(W(t+h)-W(t))\leq\exp(Ch)$ for some constant $C$, by \cref{lemma:RN}, we have
    \begin{align*}
        \P^Y(N(t+h,A)-N(t,A)>1|\mc{F}_t)=&~\frac{\E^X\brac{\frac{\dd \P^Y}{\dd \P^X}\Big|_{t+h}\mathbbm{1}(N(t+h,A)-N(t,A)>1)|\mc{F}_t}}{\frac{\dd \P^Y}{\dd \P^X}\Big|_{t}}\\
        \leq&~ \exp(Ch)\P^X(N(t+h,A)-N(t,A)>1|\mc{F}_t)\\
        =&~ O(h^2).
    \end{align*}
Note that, for $\P^Y$-submartingale $(\tilde{N}^Y)^2$, by Itô product formula, we have Doob-Meyer decomposition:
\begin{align*}
    \dd (\tilde{N}^Y)^2(t,A)=&~ 2\tilde{N}^Y(t-)\dd\tilde{N}^Y(t,A)+\dd [\tilde{N}^Y,\tilde{N}^Y](t,A)\\
    =&~2\tilde{N}^Y(t-,A)\dd\tilde{N}^Y(t,A)+\dd N(t,A).
\end{align*}
Then, by \cref{lemma:RN} again, $\E^Y[(\tilde{N}^Y(t+h,A)-\tilde{N}^Y(t,A))^2|\mc{F}_t]=\E^Y[N(t+h,A)-N(t,A)|\mc{F}_t]=O(h)$. By Cauchy-Schwarz inequality, we can obtain
{\begin{align*}
    &~\E^Y\brac{(N(t+h,A)-N(t,A))\mathbbm{1}(N(t+h,A)-N(t,A)\ge 2)|\mc{F}_t}\\
    \leq&~\sqrt{\E^Y\bracBig{(N(t+h,A)-N(t,A))^2|\mc{F}_t}\P^Y\parenBig{N(t+h,A)-N(t,A)>1|\mc{F}_t}}\\
    \leq&~\sqrt{\parenBig{2\E^Y\brac{(\tilde{N}^Y(t+h,A)-\tilde{N}^Y(t,A))^2|\mc{F}_t}+2\E^Y\bracBig{\parenBig{\int_t^{t+h}\sum_{z\in A}u^Y_s(z,X(s-))\mathbbm{1}(z\neq X(s-))\dd s}^2\Big|\mc{F}_t}}}\\
    &~\times\sqrt{\P^Y(N(t+h,A)-N(t,A)>1|\mc{F}_t)}\\
    =&~O(h^{3/2})
\end{align*}}
Then, conditioning on $\mc{F}_t$, for $z\neq X(t)$, we have
\begin{align*}
    \P^Y(X(t+h)=z|\mc{F}_t)\leq&~\P^Y(N(t+h,z)-N(t,z)=1|\mc{F}_t)+O(h^2)\\
    =&~\E^Y(N(t+h,z)-N(t,z)|\mc{F}_t)+O(h^{3/2})\\
    =&~\int_t^{t+h}u^Y_t(z,X(t))\dd s+\int_t^{t+h}u^Y_s(z,X(t))-u^Y_t(z,X(t))\dd s\\
    &~+\E^Y(\int_t^{t+h}u^Y_s(z,X(s-))-u^Y_s(z,X(t))\dd s|\mc{F}_t)+O(h^{3/2})\\
    =&~u_t(z,X(t))h+O(h^{3/2}),
\end{align*}
since\allowdisplaybreaks[0]
\begin{align*}
    \E^Y(\int_t^{t+h}u^Y_s(z,X(s-))-u^Y_s(z,X(t))\dd s|\mc{F}_t)\leq&~ 2Mh\P^Y(N(t+h,\mc{S}^\mc{D})-N(t,\mc{S}^\mc{D})\ge1|\mc{F}_t)\\
    \leq&~ 2Mh\E^Y(N(t+h,\mc{S}^\mc{D})-N(t,\mc{S}^\mc{D})|\mc{F}_t)\\
    \leq&~ 2M^2h^2=O(h^2).
\end{align*}
This completes the proof.

\end{proof}

\subsection{Proof of \cref{thm:KL bound}}
\begin{proof}
    By \cref{thm:change of measure}, we compute directly:
    {\begin{align*}
    D_{\text{KL}}(\P^X_t||\P^Y_t)=&~\E^X\bracBig{\log (\frac{\dd \P^X}{\dd \P^Y}\Big|_t)}\\
        =&~-\E^X\setBig{\int_0^t\sum_{x\neq X(s-)}\brac{u_s^X(x,X(s-))-u_s^Y(x,X(s-))}+u_s^X(x,X(s-))\log \frac{u_s^Y(x,X(s-))}{u_s^X(x,X(s-))}\dd s}\\
        &~-\E^X\setBig{\int_0^t\sum_{x\neq X(s-)} \log \frac{u_s^Y(x,X(s-))}{u_s^X(x,X(s-))}\tilde{N}^X(\dd s,x)}\\
        =&~\E^X\setBig{\int_0^t\sum_{x\neq X(s)}D_F(u^X_s(x,X(s))||u^Y_s(x,X(s)))\dd s},
    \end{align*}}
where the last equation holds since the integrator is the Lebesgue measure.
     
    By Jensen's inequality, we have 
    \begin{align*}
         D_{\text{KL}}(p^X_{t}||p^Y_t)=&~\E^X\bracBig{\log (\frac{\dd \P^X}{\dd \P^Y}\Big|_{\sigma(X_t)})}\\
         \leq&~\E^X\bracBig{\log (\frac{\dd \P^X}{\dd \P^Y}\Big|_{\mc{F}_t})}\\
         =&~\E^X\setBig{\int_0^t\sum_{x\neq X(s)}D_F(u^X_s(x,X(s))||u^Y_s(x,X(s)))\dd s}.
    \end{align*}
This completes the proof.
\end{proof}

\section{Proof of Main Results in \cref{sec:analysis without boundedness}}\label{sec:proof main results 1}
\subsection{Proof of \cref{prop:error decomposition 1}}
\begin{proof}
    Using the definition of empirical risk minimization, we have $\mc{L}_n(\hat{u})\leq\mc{L}_n(u^*)$. Thus, it holds that
    \begin{align*}
        \E\brac{\mc{L}(\hat{u})-\mc{L}(u^0)}\leq&~\E\bracBig{\mc{L}(\hat{u})-\mc{L}_n(\hat{u})+\mc{L}_n(\hat{u})-\mc{L}_n(u^*)+\mc{L}_n(u^*)-\mc{L}(u^0)}\\
        \leq&~\E\sup_{u\in\mc{G}_n}\absBig{\mc{L}_n(u)-\mc{L}(u)}+\mc{L}(u^*)-\mc{L}(u^0),
    \end{align*}
    which completes the proof by using \Eqref{eq:approximation error}.
\end{proof}

\subsection{Proof of \cref{thm: stochastic error 1}}
\begin{proof}
    Note that
    \begin{align*}
        \E\sup_{u\in\mc{G}_n}\absBig{\mc{L}_n(u)-\mc{L}(u)}\leq\inf_{\eta>0}\setBig{\eta+\int_{\eta}^\infty\P\parenBig{\exists u\in\mc{G}_n, \absBig{\mc{L}_n(u)-\mc{L}(u)}\ge t}}.
    \end{align*}
Next, we aim to bound the above tail probability. Recall that $u_t(z,x)=\sum_{d\in\brac{\mc{D}}}\delta_{x^{\bsl d}}(z^{\bsl d})u^d_t(z^d,x)$ for any $z\neq x$. Therefore, we can focus only on $u_t^d(s,x)$ as a function of $(x,t)$, given $(d,s)\in\mc{D}\times \mc{S}$. Thus, the loss function can be rewritten as
\begin{align*}
    \mc{L}_n(u)=&~\dn\sumn\sum_{d\in\brac{\mc{D}}}\sum_{s\in\mc{S}}\setBig{\parenBig{1-\delta_{X_i(\rvt)^d}(s)}\bracBig{-u^d_{\rvt}(s,X_i(\rvt)^d|X_i(1)^d)\log u^d_{\rvt}(s,X_i(\rvt))+u_{\rvt}^d(s,X_i(\rvt))}}\\
    \overset{\triangle}{=}&~\dn\sumn\sum_{d\in\brac{\mc{D}}}\sum_{s\in\mc{S}}\mc{J}(u,d,s,Z_i).
\end{align*}
Similarly, we have
\begin{align*}
    \mc{L}(u)=\sum_{d\in\brac{\mc{D}}}\sum_{s\in\mc{S}}\E\brac{\mc{J}(u,d,s,Z)}.
\end{align*}
By symmetrization argument for probabilities \citep[e.g. Lemma 2.3.7 in][]{vaart2023empirical}, we have
\begin{align*}
    &~\P\parenBig{\exists u\in\mc{G}_n, \absBig{\mc{L}_n(u)-\mc{L}(u)}\ge t}\\
    \leq&~\P\parenBig{\exists (u,d,s)\in\mc{G}_n\times\brac{\mc{D}}\times\mc{S}, \absBig{\dn\sumn\mc{J}(u,d,s,Z_i)-\E\brac{\mc{J}(u,d,s,Z)}}\ge \frac{t}{\mc{D}\abs{\mc{S}}}}\\
    \leq&~\frac{2}{\beta_n(\frac{tn}{\mc{D}\abs{\mc{S}}})}\P\parenBig{\exists (u,d,s)\in\mc{G}_n\times\brac{\mc{D}}\times\mc{S}, \absBig{\dn\sumn\eps_i\parenBig{\mc{J}(u,d,s,Z_i)-\E\brac{\mc{J}(u,d,s,Z)}}}\ge \frac{t}{4\mc{D}\abs{\mc{S}}}},
\end{align*}
where $\set{\eps_i}_{i=1}^n$ is a sequence of i.i.d. Rademacher variables, and
\begin{align}\label{eq:proof-stochastic error 1-1}
    \beta_n(\frac{tn}{\mc{D}\abs{\mc{S}}})\ge 1-(\frac{4\mc{D}^2\abs{\mc{S}}^2}{nt^2})\sup_{(u,d,s)\in(\mc{G}_n,\mc{D},\mc{S})}\E\brac{\mc{J}^2(u,d,s,Z)}\ge 1-(\frac{4\mc{D}^2\abs{\mc{S}}^2}{nt^2})M^2(\log m_n^{-1} +1)^2.
\end{align}

Given $\mathbbm{D}_n$, we consider the following sequence with length $n$:
\begin{align*}
    \rvs_{n}=\setBig{(\rvt_i,X_i(\rvt_i))}_{i\in\brac{n}},
\end{align*}
Let $\mc{H}^{d,s}_\delta(\mathbbm{D}_n)$ be a $L^\infty$ $\delta$-covering of $\mc{G}^\prime_n|_{\rvs_{n}}$ with minimal size. That is to say, given $\mathbbm{D}_n$, $d\in\brac{\mc{D}}$ and $s\in\mc{S}$, for any $u^d_\cdot(s,\cdot)\in\mc{G}_n^\prime$, there exists $\tilde{u}^d_\cdot(s,\cdot)\in\mc{H}^{d,s}_\delta(\mathbbm{D}_n)$ such that for any $i\in\brac{n}$
\begin{align*}
    \abs{u^d_{\rvt_i}(s,X_i(\rvt_i))-\tilde{u}^d_{\rvt_i}(s,X_i(\rvt_i))}\leq \delta,
\end{align*} 
which implies that
\begin{align}\label{proof:g_lipschitz}
    \abs{\mc{J}(u,d,s,Z_i)-\mc{J}(\tilde{u},d,s,Z_i)}\leq(\frac{M_\tau}{m_n}+1)\delta.
\end{align} 
Define
\begin{align*}
\mc{H}_\delta(\mathbbm{D}_n)=\setBig{\set{u_\cdot^d(s,\cdot)}_{d\in\mc{D},s\in\mc{S}}:\mc{S}^\mc{D}\times[0,1]\to\R^{\mc{D}\times\abs{\mc{S}}}: \abs{u_t^d(z^d,x)}\ge m_n ,u_\cdot^d(s,\cdot)\in\mc{H}^{d,s}_\delta(\mathbbm{D}_n)}.
\end{align*}

By Hoffeding's inequality \citep[e.g., Theorem 2.6.2 of][]{vershynin2018high}, conditioning argument and union bound, if $\delta\leq \frac{t}{16(\frac{M}{m_n}+1)\mc{D}\abs{\mc{S}}}$, then we have
\begin{equation}
    \begin{aligned}
    &~\P\parenBig{\exists (u,d,s)\in\mc{G}_n\times\brac{\mc{D}}\times\mc{S}, \absBig{\dn\sumn\eps_i\parenBig{\mc{J}(u,d,s,Z_i)-\E\brac{\mc{J}(u,d,s,Z)}}}\ge \frac{t}{4\mc{D}\abs{\mc{S}}}}\\
    \leq&~\P\Big\{\exists (u,d,s)\in\mc{H}_n(\mathbbm{D}_n)\times\brac{\mc{D}}\times\mc{S}, \absBig{\dn\sumn\eps_i\parenBig{\mc{J}(u,d,s,Z_i)-\E\brac{\mc{J}(u,d,s,Z)}}}\ge\frac{t}{8\mc{D}\abs{\mc{S}}}\Big\}\\
    \leq &~2\E\bracBig{\mc{D}\abs{\mc{S}}\mc{N}_n(\delta,\mc{G}_n^\prime|_{\rvs_n},L^\infty)\exp\parenBig{-\frac{Cnt^2}{M_\tau^2(\log m_n^{-1}+1)^2\mc{D}^2\abs{\mc{S}}^2}}}\\
    \leq&~2\mc{D}\abs{\mc{S}}\mc{N}_n(\delta,\mc{G}_n^\prime,L^\infty)\exp\parenBig{-\frac{Cnt^2}{M_\tau^2(\log m_n^{-1}+1)^2\mc{D}^2\abs{\mc{S}}^2}}.
\end{aligned}
\end{equation}

Choosing
\begin{align*}
    \eta^*_n=CM_\tau(\log m_n^{-1}+1)\mc{D}\abs{\mc{S}}\sqrt{\frac{\log \brac{\mc{D}\abs{\mc{S}}\mc{N}_n(\delta,\mc{G}_n^\prime,L^\infty)}}{n}},
\end{align*}
we have
\begin{align*}
    &~\E\sup_{u\in\mc{G}_n}\absBig{\mc{L}_n(u)-\mc{L}(u)}\leq\setBig{\eta^*_n+\int_{\eta^*_n}^\infty\P\parenBig{\exists u\in\mc{G}_n, \absBig{\mc{L}_n(u)-\mc{L}(u)}\ge t}}\\
    \leq &~ \eta^*_n+ \frac{CM_\tau^2(\log m_n^{-1}+1)^2\mc{D}^3\abs{\mc{S}}^3\mc{N}_n(\delta,\mc{G}_n^\prime,L^\infty)}{n\eta^*_n}\exp\parenBig{-\frac{Cn(\eta^*_n)^2}{M^2_\tau(\log m_n^{-1}+1)^2\mc{D}^2\abs{\mc{S}}^2}}\\
    \leq&~CM_\tau(\log m_n^{-1}+1)\mc{D}\abs{\mc{S}}\sqrt{\frac{\log \brac{\mc{D}\abs{\mc{S}}\mc{N}_n(\delta,\mc{G}_n^\prime,L^\infty)}}{n}},,
\end{align*}
provided that $\delta\leq \frac{\eta^*_n}{16(\frac{M_\tau}{m_n}+1)\mc{D}\abs{\mc{S}}}$. This completes the proof by choosing $\delta=cm_n/(\sqrt{n})$.

\end{proof}

\subsection{Proof of \cref{thm:approximation error}}
\begin{proof}
    We prove that there exists a sequence of neural networks with the ReLU activation function that can control the approximation error. We focus on the region $(t,x)\in[0,1]\times [0,\abs{\mc{S}}]^{\mc{D}}$. For any $d\in\mc{D}$ and $s\in\mc{S}$, if we can find a network $u^{1,d}_\cdot(s,\cdot):[0,1]\times [0,\abs{\mc{S}}]^{\mc{D}}\to [m_n,B_0]$ such that $\abs{(1-\delta_{s}(x^d))u^{0,d}_t(s,x)-u^{1,d}_t(s,x)}<\eps$ for any $(t,x)\in[0,1-\tau]\times [0,\abs{\mc{S}}]^{\mc{D}}$, then the approximation error can be bounded by $\frac{1}{m_n}\mc{D}\abs{\mc{S}}(\eps\vee m_n)^2$. It suffices to analyze the approximation error for a given $d\in\brac{D}$ and $s\in\mc{S}$.

    Let $f^{0,d,s}(t,x)=(1-\delta_{s}(x^d))u^{0,d}_t(s,x)$ and $\tilde{u}^{0,d}_t(s,x^\prime)=f^{0,d,s}(t,x^\prime\abs{\mc{S}})$ for $x^\prime\in[0,1]^{\mc{D}}$. Then $$\tilde{u}^{0,d}_\cdot(s,\cdot)\in\mc{H}^\beta([0,1]^{\mc{D}+1},\abs{\mc{S}}^{\beta}M_\tau).$$ \cref{lemma:approximation error} implies that for any $S_1,S_2\in\N^+$, there exists a function $f^*$ implemented by a ReLU network with depth $\mathrm{D}=21(\lfloor\beta\rfloor+1)^2S_1\lceil\log_2(8S_1)\rceil$ and width $\mathrm{W}=38(\lfloor \beta\rfloor+1)^2(\mc{D}+1)^{\lfloor \beta\rfloor+1}S_2\lceil\log_2(8S_2)\rceil$ such that
\begin{align*}
    \abs{f^*(d,s,t,x^\prime)-\tilde{u}^{0,d}_t(s,x^\prime)}\leq 18\abs{\mc{S}}^{\beta}B_0(\lfloor\beta\rfloor+1)^2(\mc{D}+1)^{\lfloor\beta\rfloor+(\beta\vee1)/2}(S_1S_2)^{-2\beta/(\mc{D}+1)},
\end{align*}
for any $(t,x^\prime)\in[0,1]^{\mc{D}+1}\bsl\Omega([0,1]^{\mc{D}+1},K,\delta)$, where $K=\lceil(S_1S_2)^{2/(\mc{D}+1)}\rceil$ and $\delta$ is an arbitrary number in $(0,1/(3K))$. Let $f^{**}(d,s,t,x)=f^*(d,s,t,x/\abs{\mc{S}})$ for $x\in[0,\abs{\mc{S}}]^{\mc{D}}$ be a network with depth $\mathrm{D}+1$. Note that a clip function can be expressed as a two-layer ReLU network, then we define the NN $f^{***}(d,s,t,x)=m_n+\text{ReLU}(M_\tau-\text{ReLU}(M_\tau-f^{**}(d,s,t,x))-m_n)$ with depth $\mathrm{D}+3$, whose range is $[m_n,M_\tau]$.

Consequently, we have the following bound
\begin{align*}
    \abs{f^{***}(d,s,t,x)-f^{0,d,s}(t,x)}\leq 18\abs{\mc{S}}^{\beta}B_0(\lfloor\beta\rfloor+1)^2(\mc{D}+1)^{\lfloor\beta\rfloor+(\beta\vee1)/2}(S_1S_2)^{-2\beta/(\mc{D}+1)}\vee m_n,
\end{align*}
for any $(t,x)\in[0,1]\times\brac{0,\abs{\mc{S}}}^{\mc{D}}$ such that $(t,x/\abs{\mc{S}})\in[0,1]^{\mc{D}+1}\bsl\Omega([0,1]^{\mc{D}+1},K,\delta)$, where
\begin{align*}
    \Omega([0,1]^{\mc{D}+1},K,\delta)=\cup_{i=1}^{\mc{D}+1}\set{x=[x_1,x_2,\dots,x_{\mc{D}+1}]^\top:x_i\in\cup_{k=1}^{K-1}(k/K-\delta,k/K)}.
\end{align*}
Here, $K=\lfloor(S_1S_2)^{2/(\mc{D}+1)}\rfloor$ and $\delta$ is an arbitrary number in $(0,1/(3K)]$. Note that $\rvt\sim\mc{U}([0,1-\tau])$, whose probability measure is absolutely continuous w.r.t. the Lebesgue measure. Moreover, when $\delta$ is sufficiently small, the set $\set{1/\abs{\mc{S}},2/\abs{\mc{S}},\dots,1}$ and the set $\cup_{k=1}^{K-1}(k/K-\delta,k/K)$ are disjoint. Then, the event $$\set{(\rvt,x/\abs{\mc{S}})\in[0,1]^{\mc{D}+1}\bsl\Omega([0,1]^{\mc{D}+1},K,\delta) \text{ for any }  x\in\mc{S}^\mc{D}}$$ holds with probability greater than $1-2K\delta$. Thus, since $\delta$ is an arbitrary number in $(0,1/(3K))$, by choosing $m_n=\abs{\mc{S}}^{\beta}B_0(\lfloor\beta\rfloor+1)^2(\mc{D}+1)^{\lfloor\beta\rfloor+(\beta\vee1)/2}(S_1S_2)^{-2\beta/(\mc{D}+1)}$, the approximation error has the following bound (we use $D_F(a||b)=b\brac{(a/b)\log(a/b)-(a/b)+1}\leq b\brac{(a/b)-1}^2\leq (a-b)^2/b$ for any $a\ge0,b>0$)
\begin{equation}\label{eq:approximation error with relu}
\begin{aligned}
    &~\inf_{u\in\mc{G}_n}\E\bracBig{\sum_{z\neq X(\rvt)}D_F\parenBig{u^0_\rvt(z,X(\rvt))||u_\rvt(z,X(\rvt))}}\\
    \leq&~\frac{1}{m_n}\E\setBig{\sum_{d\in\brac{\mc{D}},s\in\mc{S}}\absBig{f^{***}(d,s,\rvt,X(\rvt))-f^{0,d,s}(\rvt,X(\rvt))}^2}\\
    \leq&~C\mc{D}\abs{\mc{S}}^{\beta+1}B_0(\lfloor\beta\rfloor+1)^2(\mc{D}+1)^{\lfloor\beta\rfloor+(\beta\vee1)/2}(S_1S_2)^{-2\beta/(\mc{D}+1)},
\end{aligned}
\end{equation}
where $\mc{G}_n$ is the class of matrix-valued ReLU networks with range $[m_n,M_\tau]^{\mc{D}\times\abs{\mc{S}}}$, depth $\mathrm{D}^*=21(\lfloor\beta\rfloor+1)^2S_1\lceil\log_2(8S_1)\rceil+3$, width $\mathrm{W}^*=38(\lfloor \beta\rfloor+1)^2(\mc{D}+1)^{\lfloor \beta\rfloor+1}S_2\lceil\log_2(8S_2)\rceil$, $S_1,S_2\in\N^+$.

\end{proof}


\subsection{Proof of \cref{thm:early stopping error}}
\begin{proof}
\underline{\bf Uniform source distribution.} We follow the proof of Theorem 6 (2) in \cite{chen2024convergence} and Theorem 1 in \cite{zhang2024convergence}. We have
\begin{align*}
\text{TV}(p_1,p_{1-\tau})\leq&~\P(X(1)\neq X(1-\tau))\\
=&~ 1-\sum_{x_1}p(x_1)\prod_{d=1}^\mc{D}p^d_{1-\tau|1}(x_1^d|x_1^d)\\
=&~1-\parenBig{\kappa_{1-\tau}+\frac{1-\kappa_{1-\tau}}{\abs{\mc{S}}}}^\mc{D}\\
=&~1-\exp\setBig{\mc{D}\log\parenBig{-(1-\kappa_{1-\tau})\frac{\abs{\mc{S}}-1}{\abs{\mc{S}}}+1}}.
\end{align*}

\noindent\underline{\bf Masked source distribution.} Using a similar argument, we have
\begin{align*}
\text{TV}(p_1,p_{1-\tau})\leq&~\P(X(1)\neq X(1-\tau))\\
=&~ 1-\sum_{x_1}p(x_1)\prod_{d=1}^\mc{D}p^d_{1-\tau|1}(x_1^d|x_1^d)\\
=&~1-\kappa_{1-\tau}^\mc{D}\\
=&~1-\exp\setBig{\mc{D}\log\kappa_{1-\tau}},
\end{align*}
which completes the proof.
\end{proof}
\subsection{Proof of \cref{thm:overall error bound 1}}
\begin{proof}
In this proof, the error bound might omit a logarithmic multiplier of $n\tau^2$. 

\noindent\underline{\bf Step 1. Choosing hyperparameters.} By \cref{thm: stochastic error 1}, \cref{lemma:pseudo dimension} and \cref{lemma:ReLu}, the stochastic error is bounded by
\begin{align*}
\E\sup_{u\in\mc{G}_n}\absBig{\mc{L}_n(u)-\mc{L}(u)}\leq C\abs{\mc{S}}\mc{D}\log (nm_n^{-1})\sqrt{\frac{\mathrm{S}^*\mathrm{D}^*\log\mathrm{S}^*}{n\tau^2}},
\end{align*}
where $\mathrm{S}^*$ is the number of parameters of the ReLU networks in $\mc{G}_n^\prime$. Note that for a ReLU network with depth $\mathrm{D}^*$ and width $\mathrm{W}^*$ and input dimension $\mc{D}+1$, we have (assume that $\mc{D}\lesssim\mathrm{W}^*\mathrm{D}^*$)
\begin{align*}
    \mathrm{S}^*\leq\underbrace{\mathrm{W}^*(\mc{D}+1)+\mathrm{W}^*}_{\text{input layer}}+\underbrace{((\mathrm{W}^*)^2+\mathrm{W}^*)(\mathrm{D}^*-1)}_{\text{hidden layer}}+\underbrace{\mathrm{W}^*+1}_{\text{output layer}}=O((\mathrm{W}^*)^2\mathrm{D}^*).
\end{align*}
Therefore, by \cref{thm:approximation error}, choosing $S_1=C(n\tau^2)^\frac{\mc{D}+1}{(2\mc{D}+4\beta+2)}$ and $S_2=C$, we have
\begin{align*}
    \mathrm{W}^*=C(\mc{D}+1)^{\lfloor\beta\rfloor+1};~\mathrm{D}^*=C(n\tau^2)^\frac{\mc{D}+1}{(2\mc{D}+4\beta+2)}\log (n\tau^2);~ \mathrm{S}^*=C\mc{D}^{2\lfloor\beta\rfloor+2}(n\tau^2)^\frac{\mc{D}+1}{(2\mc{D}+4\beta+2)}\log (n\tau^2),
\end{align*}
yielding that
\begin{align*}
\E\sup_{u\in\mc{G}_n}\absBig{\mc{L}_n(u)-\mc{L}(u)}\leq C\abs{\mc{S}}\mc{D}^{\lfloor\beta\rfloor+2}(n\tau^2)^{-\frac{2\beta}{(2\mc{D}+4\beta+2)}}\log (n\tau^2)\log(nm_n^{-1}),
\end{align*}
if $n\ge \mc{D}$.

By \cref{thm:approximation error}, the approximation error is
\begin{align*}
    &~\inf_{u\in\mc{G}_n}\E\bracBig{D_F\parenBig{\sum_{z\neq X(\rvt)}u^0(\rvt,z,X(\rvt))||\sum_{z\neq X(\rvt)}u(\rvt,z,X(\rvt))}} \leq C\abs{\mc{S}}^{\beta+1}\mc{D}^{2\lfloor\beta\rfloor+2}\tau^{-1}(n\tau^2)^{-\frac{2\beta}{(2\mc{D}+4\beta+2)}}
\end{align*}
Then the summation of the approximation error and the stochastic error has the convergence rate (up to some logarithmic multiplier)
\begin{align}\label{eq:approximation error with sample size}
   \E_{\mathbbm{D}_n}\brac{D_{KL}(p_{1-\tau}||\hat{p}_{1-\tau})}\leq C\abs{\mc{S}}^{\beta+1}\mc{D}^{2\lfloor\beta\rfloor+2}\tau^{-1}(n\tau^2)^{-\frac{2\beta}{(2\mc{D}+4\beta+2)}}.
\end{align}

\noindent\underline{\bf Step 2. Deriving final results.} By triangle inequality, Pinsker's inequality and \cref{thm: stochastic error 1,thm:approximation error}, for sufficiently large $n\tau^2$, we have
    \begin{align*}
        \E_{\mathbbm{D}_n}\bracBig{\text{TV}(p_1,\hat{p}_{1-\tau})} \leq&~ \E_{\mathbbm{D}_n}\bracBig{\text{TV}(p_{1-\tau},\hat{p}_{1-\tau})}+\text{TV}(p_1,p_{1-\tau})\\
        \leq&~\E_{\mathbbm{D}_n}\brac{D_{KL}(p_{1-\tau}||\hat{p}_{1-\tau})}+\text{TV}(p_1,p_{1-\tau})\\
        \leq&~C\abs{\mc{S}}^{\beta+1}\mc{D}^{\lfloor\beta\rfloor+1}\tau^{-(1/2)}(n\tau^2)^{-\frac{\beta}{(2\mc{D}+4\beta+2)}}+C\tau\mc{D},
    \end{align*}
which completes the proof.

\end{proof}

\section{Proof of Results in \cref{sec:analysis with boundedness}}\label{sec:proof main results 2}
\subsection{Proof of \cref{prop:error decomposition 2}}
\begin{proof}
    This proof is similar to the proof of Lemma 3.1 in \cite{jiao2023deep}. Note that
\begin{align*}
    \E_{\mathbbm{D}_n}\brac{\mc{L}_n(\hat{u})-\mc{L}_n(u^0)}\leq \E_{\mathbbm{D}_n}\brac{\mc{L}_n(u^*)-\mc{L}_n(u^0)},
\end{align*}
which yields that
\begin{align*}
    -\mc{L}(u^0)\leq 2\mc{L}(u^*)-\mc{L}(u^0)-\E_{\mathbbm{D}_n}\brac{2\mc{L}_n(\hat{u})}.
\end{align*}
Then we have
\begin{align*}
    \E_{\mathbbm{D}_n}\brac{\mc{L}(\hat{u})-\mc{L}(u^0)}\leq \E_{\mathbbm{D}_n}\brac{\mc{L}(\hat{u})+\mc{L}(u^0)-2\mc{L}_n(\hat{u})}+2\brac{\mc{L}(u^*)-\mc{L}(u^0)},
\end{align*}
which completes the proof by using \Eqref{eq:approximation error}.
\end{proof}

\subsection{Proof of \cref{thm:stochastic error 2}}
{\it Sketch of proof. } This proof is similar to the proof of Theorem 11.4 in \cite{gyorfi2002distribution}, where they only consider the squared loss. We first decompose the objective into $\mc{D}\abs{\mc{S}}$ terms
\begin{align*}
    \E_{\mathbbm{D}_n}\brac{\mc{L}(\hat{u})+\mc{L}(u^0)-2\mc{L}_n(\hat{u})}=\E_{\mathbbm{D}_n}\bracBig{\E_Z\brac{\sum_{d\in\brac{\mc{D}},s\in\mc{S}}\mc{J}(\hat{u},d,s,Z)}-\frac{2}{n}\sumn\sum_{d\in\brac{\mc{D}},s\in\mc{S}}\mc{J}(\hat{u},d,s,Z_i)}
\end{align*}
where the function $\mc{J}$ is defined in \eqref{eq:quantity-J}. To bound the expectation, we focus on the tail probability
\begin{align*}
    \P\parenBig{\exists(u,d,s)\in\mc{G}_n\times\brac{\mc{D}}\times\mc{S},\E_Z\brac{\mc{J}(u,d,s,Z)}-\frac{2}{n}\sumn\mc{J}(u,d,s,Z_i)>\frac{t}{\mc{D}\abs{\mc{S}}}}
\end{align*}
Next, we replace the expectation by an empirical mean of a "ghost" sample $\mathbbm{D}_n^\prime$ independent of $\mathbbm{D}_n$. Additionally, to use concentration inequality, we have to consider the nonnegative empirical mean $\dn\sumn\setBig{\mc{J}(u,d,s,Z_i)}^2$ instead of $\dn\sumn\mc{J}(u,d,s,Z_i)$. By symmetrization argument and introducing Rademacher variables $\set{\eps_i}_{i=1}^n$, the above probability can be bounded by
{\begin{align*}
    &~\P\Big\{\exists(u,d,s)\in\mc{G}_n\times\brac{\mc{D}}\times\mc{S},\dn\sumn\eps_i\setBig{\mc{J}(u,d,s,Z_i)}^2>\bracBig{\frac{ct}{\mc{D}\abs{\mc{S}}}+\frac{c}{n}\sumn\setBig{\mc{J}(u,d,s,Z_i)}^2}\Big\}\\
    +&~\P\Big(\exists(u,d,s)\in\mc{G}_n\times\brac{\mc{D}}\times\mc{S}:\frac{1}{n}\sumn\eps_i\mc{J}(u,d,s,Z_i)>\frac{ct}{\mc{D}\abs{\mc{S}}}+\frac{c}{n}\sumn\setBig{\mc{J}(u,d,s,Z_i)}^2\Big).
\end{align*}}
Finally, conditioning on $\mathbbm{D}_n$ and introducing a covering, we can apply some concentration inequalities to bound these two terms.

\begin{proof}
We use the following notation throughout this proof. 
\begin{align}\label{eq:quantity-J}
    \mc{J}(u,d,s,Z)=\parenBig{1-\delta_{X(\rvt)^d}(s)}\bracBig{-u^d_{\rvt}(s,X(\rvt)^d|X(1)^d)\log \frac{u^d_{\rvt}(s,X(\rvt))}{u^{0,d}_{\rvt}(s,X(\rvt))}+u^d_{\rvt}(s,X(\rvt))-u^{0,d}_{\rvt}(s,X(\rvt))}.
\end{align}

Note that
\begin{align*}
    &~\E_{\mathbbm{D}_n}\brac{\mc{L}(\hat{u})+\mc{L}(u^0)-2\mc{L}_n(\hat{u})}\\
    =&~\E_{\mathbbm{D}_n}\brac{\mc{L}(\hat{u})-\mc{L}(u^0)-2\mc{L}_n(\hat{u})+2\mc{L}(u^0)}\\
    =&~\E_{\mathbbm{D}_n}\bracBig{\E_Z\brac{\sum_{d\in\brac{\mc{D}},s\in\mc{S}}\mc{J}(\hat{u},d,s,Z)}-\frac{2}{n}\sumn\sum_{d\in\brac{\mc{D}},s\in\mc{S}}\mc{J}(\hat{u},d,s,Z_i)}\\
    \leq&~a_n+\int_{a_n}^\infty\P\parenBig{\exists(u,d,s)\in\mc{G}_n\times\brac{\mc{D}}\times\mc{S},\E_Z\brac{\mc{J}(u,d,s,Z)}-\frac{2}{n}\sumn\mc{J}(u,d,s,Z_i)>\frac{t}{\mc{D}\abs{\mc{S}}}}\dd t,
\end{align*}
where $a_n$ is a quantity depending on $n$, which we will choose later. Thus, we can focus on the above tail probability.

\noindent\underline{\bf Step 1. Tail probability decomposition.} Following the proof of symmetrization in probability (e.g., Lemma 2.3.7 in \cite{vaart2023empirical}), consider the following event
\begin{align*}
    \mc{B}_1=\set{\hat{\mc{A}}(t) \text{ is a non-empty set}},
\end{align*}
where $t\leq 1$ and
{\begin{equation}\label{proof:At}
\begin{aligned}
    \hat{\mc{A}}(t)=&~\setBig{(u,d,s)\in\mc{G}_n\times\brac{\mc{D}}\times\mc{S}:\E_Z\brac{\mc{J}(u,d,s,Z)}-\frac{2}{n}\sumn\mc{J}(u,d,s,Z_i)>\frac{t}{\mc{D}\abs{\mc{S}}}}\\
    =&~\Big\{(u,d,s)\in\mc{G}_n\times\brac{\mc{D}}\times\mc{S}:\E\brac{\mc{J}(u,d,s,Z)}-\frac{1}{n}\sumn\mc{J}(u,d,s,Z_i)\\
    &~\qquad>\frac{1}{3}\parenBig{\frac{2t}{\mc{D}\abs{\mc{S}}}+\E\mc{J}(u,d,s,Z)+\frac{1}{n}\sumn\mc{J}(u,d,s,Z_i)}\Big\}.
\end{aligned}
\end{equation}}
Let $(\bar{u},\bar{d},\bar{s})$ be a (random) function such that $(\bar{u},\bar{d},\bar{s})\in \hat{\mc{A}}(t)$ if $\mc{B}_1$ holds, and let $(\bar{u},\bar{d},\bar{s})=(1,1,1)$ if $\mc{B}_1$ does not hold, where $(\bar{u},\bar{d},\bar{s})$ depends on $\mathbbm{D}_n$. By assumptions, note that $\abs{\mc{J}(u,d,s,Z)}\leq (1+\log\frac{M_\tau}{m})M_\tau\overset{\triangle}{=}K_1$ for any $(u,d,s)\in\mc{G}_n\times\brac{\mc{D}}\times\mc{S}$. Let $\mathbbm{D}^\prime_n=\set{Z_i^\prime}_{i=1}^n$ be an independent copy of $\mathbbm{D}_n$. By Markov's inequality, we have

\begin{equation}\label{proof:eta1}
\begin{aligned}
    &~\P\setBig{\E\brac{\mc{J}(\bar{u},\bar{d},\bar{s},Z)|\mathbbm{D}_n}-\frac{1}{n}\sumn\mc{J}(\bar{u},\bar{d},\bar{s},Z_i^\prime)>\frac{1}{4}\bracBig{\frac{t}{\mc{D}\abs{\mc{S}}}+\E\brac{\mc{J}(\bar{u},\bar{d},\bar{s},Z)|\mathbbm{D}_n}}\Big|\mathbbm{D}_n}\\
    \leq&~\frac{16\E\brac{(\mc{J}(\bar{u},\bar{d},\bar{s},Z))^2|\mathbbm{D}_n}}{n(\frac{t}{\mc{D}\abs{\mc{S}}}+\E\brac{\mc{J}(\bar{u},\bar{d},\bar{s},Z)|\mathbbm{D}_n})^2}\leq \frac{8K_1\mc{D}\abs{\mc{S}}}{nt}\overset{\triangle}{=}\eta_1.
\end{aligned}
\end{equation}
Since $\mc{B}_1\in\sigma(\mathbbm{D}_n)$, we have

\begin{equation}\label{proof:similar arg}
\begin{aligned}
&~(1-\eta_1)\P\parenBig{\exists(u,d,s)\in\mc{G}_n\times\brac{\mc{D}}\times\mc{S},\E_Z\brac{\mc{J}(u,d,s,Z)}-\frac{2}{n}\sumn\mc{J}(u,d,s,Z_i)>\frac{t}{\mc{D}\abs{\mc{S}}}}\\
    \leq&~\E\Big(\mathbbm{1}(\mc{B}_1)\P\Big\{\E\brac{\mc{J}(\bar{u},\bar{d},\bar{s},Z)|\mathbbm{D}_n}-\frac{1}{n}\sumn\mc{J}(\bar{u},\bar{d},\bar{s},Z^\prime_i)\leq\frac{1}{4}\bracBig{\frac{t}{\mc{D}\abs{\mc{S}}}+\E\brac{\mc{J}(\bar{u},\bar{d},\bar{s},Z)|\mathbbm{D}_n}}\Big|\mathbbm{D}_n\Big\}\Big)\\
    \leq&~\P\Big(\exists (u,d,s)\in\mc{G}_n\times\brac{\mc{D}}\times\mc{S}:\frac{1}{n}\sumn\mc{J}(u,d,s,Z_i^\prime)-\frac{1}{n}\sumn\mc{J}(u,d,s,Z_i)>\frac{1}{4}\brac{\frac{t}{\mc{D}\abs{\mc{S}}}+\E\mc{J}(u,d,s,Z)}\Big),
\end{aligned}
\end{equation}
{where we use \eqref{proof:eta1} and the definition of $\hat{A}(t)$ in the second inequality.}

{Since $\mc{J}(u,d,s,Z_i^\prime)$ might be negative, to use concentration inequality, we want to focus on the nonnegative quantities $\set{\mc{J}(u,d,s,Z^\prime_i)}^2$. By a union bound, we can obtain that}
\begin{align*}
    &~\P\Big(\exists (u,d,s)\in\mc{G}_n\times\brac{\mc{D}}\times\mc{S}: \frac{1}{n}\sumn\mc{J}(u,d,s,Z_i^\prime)-\frac{1}{n}\sumn\mc{J}(u,d,s,Z_i)>\frac{1}{4}\brac{\frac{t}{\mc{D}\abs{\mc{S}}}+\E\mc{J}(u,d,s,Z)}\Big)\\
    \leq&~2\P\Big(\exists (u,d,s)\in\mc{G}_n\times\brac{\mc{D}}\times\mc{S}:\dn\sumn(\mc{J}(u,d,s,Z_i))^2-\E\brac{\mc{J}(u,d,s,Z)}^2\\
    &~\qquad>\frac{1}{2}\bracBig{\frac{t}{\mc{D}\abs{\mc{S}}}+\dn\sumn(\mc{J}(u,d,s,Z_i))^2+\E\brac{\mc{J}(u,d,s,Z)}^2}\Big)
\end{align*}
\begin{align*}
    &~+\P\Big(\exists (u,d,s)\in\mc{G}_n\times\brac{\mc{D}}\times\mc{S}:\frac{1}{n}\sumn\mc{J}(u,d,s,Z_i^\prime)-\frac{1}{n}\sumn\mc{J}(u,d,s,Z_i)>\frac{1}{4}\brac{\frac{t}{\mc{D}\abs{\mc{S}}}+\E\mc{J}(u,d,s,Z)},\\
    &~ \qquad\dn\sumn(\mc{J}(u,d,s,Z_i^\prime))^2-\E\brac{\mc{J}(u,d,s,Z)}^2\leq\frac{1}{2}\bracBig{\frac{t}{\mc{D}\abs{\mc{S}}}+\dn\sumn(\mc{J}(u,d,s,Z_i^\prime))^2+\E\brac{\mc{J}(u,d,s,Z)}^2},\\
    &~ \qquad\dn\sumn(\mc{J}(u,d,s,Z_i))^2-\E\brac{\mc{J}(u,d,s,Z)}^2\leq\frac{1}{2}\bracBig{\frac{t}{\mc{D}\abs{\mc{S}}}+\dn\sumn(\mc{J}(u,d,s,Z_i))^2+\E\brac{\mc{J}(u,d,s,Z)}^2}\Big)\\
    \overset{\triangle}{=}&~2\P(\mc{B}_2)+\P(\mc{B}_3)
\end{align*}

\noindent \underline{\bf Step 2. Bounding $\P(\mc{B}_2)$.} Let $(\bar{u},\bar{d},\bar{s})\in\mc{G}_n\times\brac{\mc{D}}]\times\mc{S}$ be a function such that
{\begin{align*}
    \dn\sumn(\mc{J}(\bar{u},\bar{d},\bar{s},Z_i))^2-\E\brac{\mc{J}(\bar{u},\bar{d},\bar{s},Z)}^2>\frac{1}{2}\bracBig{\frac{t}{\mc{D}\abs{\mc{S}}}+\dn\sumn(\mc{J}(\bar{u},\bar{d},\bar{s},Z_i))^2+\E\brac{\mc{J}(\bar{u},\bar{d},\bar{s},Z)}^2},
\end{align*}}
if $\mc{B}_2$ holds; and $(\bar{u},\bar{d},\bar{s})=(1,1,1)$ otherwise, which depends on $\mathbbm{D}_n$
Conditioning on $\mc{B}_2$ and $\mathbbm{D}_n$, by Markov's inequality, we have
\begin{equation}\label{proof:eta2}
\begin{aligned}
&~\P\Big\{\frac{1}{n}\sumn(\mc{J}(\bar{u},\bar{d},\bar{s},Z_i^\prime))^2-\E\brac{(\mc{J}(\bar{u},\bar{d},\bar{s},Z))^2|\mathbbm{D}_n}>\frac{1}{8}\bracBig{\frac{t}{\mc{D}\abs{\mc{S}}}+\frac{1}{n}\sumn(\mc{J}(\bar{u},\bar{d},\bar{s},Z_i^\prime))^2+\E\brac{\mc{J}^2(\bar{u},\bar{d},\bar{s},Z)|\mathbbm{D}}}\Big|\mathbbm{D}_n\Big\}\\
    =&~\P\Big\{\frac{7}{n}\sumn(\mc{J}(\bar{u},\bar{d},\bar{s},Z_i^\prime))^2-7\E\brac{(\mc{J}(\bar{u},\bar{d},\bar{s},Z))^2|\mathbbm{D}_n}>\frac{t}{\mc{D}\abs{\mc{S}}}+2\E\brac{(\mc{J}(\bar{u},\bar{d},\bar{s},Z))^2|\mathbbm{D}_n}\Big|\mathbbm{D}_n\Big\}\\
    \leq&~\frac{49\E\brac{\mc{J}^4(\bar{u},\bar{d},\bar{s},Z)|\mathbbm{D}_n}}{n(\frac{t}{\mc{D}\abs{\mc{S}}}+2\E\brac{(\mc{J}(\bar{u},\bar{d},\bar{s},Z))^2|\mathbbm{D}_n})^2}\leq\frac{49K_1^2\abs{\mc{S}}\mc{D}}{4nt}\overset{\triangle}{=}\eta_2.
\end{aligned}
\end{equation}

{Then, by using \eqref{proof:eta2} and a symmetrization argument, we have}
    \begin{align*} 
    &~(1-\eta_2)\P\parenBig{\mc{B}_2}\\
    \leq&~\E\Big(\mathbbm{1}(\mc{B}_2)\P\Big\{\frac{1}{n}\sumn\setBig{\mc{J}(\bar{u},\bar{d},\bar{s},Z_i^\prime)}^2-\E\brac{(\mc{J}(\bar{u},\bar{d},\bar{s},Z))^2|\mathbbm{D}_n}\\
    &~\qquad\leq\frac{1}{8}\bracBig{\frac{t}{\mc{D}\abs{\mc{S}}}+\frac{1}{n}\sumn\setBig{\mc{J}(\bar{u},\bar{d},\bar{s},Z_i^\prime)}^2+\E\brac{(\mc{J}(\bar{u},\bar{d},\bar{s},Z))^2|\mathbbm{D}_n}}\Big|\mathbbm{D}_n\Big\}\Big)\\ \leq&~\P\Big(\exists(u,d,s)\in\mc{G}_n\times\brac{\mc{D}}\times \mc{S},\frac{4}{n}\sumn\setBig{\mc{J}(u,d,s,Z_i)}^2-\frac{7}{n}\sumn\setBig{\mc{J}(u,d,s,Z_i^\prime)}^2>3\bracBig{\frac{t}{\mc{D}\abs{\mc{S}}}+\E\brac{\mc{J}(u,d,s,Z)}^2}\Big)\\
    =&~\P\Big(\exists(u,d,s)\in\mc{G}_n\times\brac{\mc{D}}\times\mc{S},\frac{1}{n}\sumn\setBig{\mc{J}(u,d,s,Z_i^\prime)}^2-\frac{1}{n}\sumn\setBig{\mc{J}(u,d,s,Z_i)}^2\\
    &~\qquad>\frac{3}{11}\Big[\frac{2t}{\mc{D}\abs{\mc{S}}}+2\E\brac{\mc{J}(u,d,s,Z)}^2+\frac{1}{n}\sumn\setBig{\mc{J}(u,d,s,Z_i^\prime)}^2+\frac{1}{n}\sumn\setBig{\mc{J}(u,d,s,Z_i)}^2\Big]\Big)\\
    \leq&~\P\Big(\exists(u,d,s)\in\mc{G}_n\times\brac{\mc{D}}\times\mc{S},\frac{1}{n}\sumn\setBig{\mc{J}(u,d,s,Z_i^\prime)}^2-\frac{1}{n}\sumn\setBig{\mc{J}(u,d,s,Z_i)}^2\\
    &~\qquad>\frac{3}{11}\bracBig{\frac{2t}{\mc{D}\abs{\mc{S}}}+\frac{1}{n}\sumn\setBig{\mc{J}(u,d,s,Z_i^\prime)}^2+\frac{1}{n}\sumn\setBig{\mc{J}(u,d,s,Z_i)}^2}\Big)\\
    =&~\P\Big(\exists(u,d,s)\in\mc{G}_n\times\brac{\mc{D}}\times\mc{S},\frac{1}{n}\sumn\eps_i\setBig{\mc{J}(u,d,s,Z_i^\prime)}^2-\frac{1}{n}\sumn\eps_i\setBig{\mc{J}(u,d,s,Z_i)}^2\\
    &~\qquad>\frac{3}{11}\bracBig{\frac{2t}{\mc{D}\abs{\mc{S}}}+\frac{1}{n}\sumn\setBig{\mc{J}(u,d,s,Z_i^\prime)}^2+\frac{1}{n}\sumn\setBig{\mc{J}(u,d,s,Z_i)}^2}\Big)\\
     \leq&~2\P\Big\{\exists(u,d,s)\in\mc{G}_n\times\brac{\mc{D}}\times\mc{S},\dn\sumn\eps_i\setBig{\mc{J}(u,d,s,Z_i)}^2>\frac{3}{11}\bracBig{\frac{t}{\mc{D}\abs{\mc{S}}}+\frac{1}{n}\sumn\setBig{\mc{J}(u,d,s,Z_i)}^2}\Big\},
    \end{align*}
where $\set{\eps_i}_{i=1}^n$ is a sequence of i.i.d. Rademacher random variables, independent of $\mathbbm{D}_n$.

Given $\mathbbm{D}_n$, we consider the following sequence with length $n$:
\begin{align*}
    \rvs_{n}=\setBig{(\rvt_i,X_i(\rvt_i))}_{i\in\brac{n}},
\end{align*}
Let $\mc{H}^{d,s}_\delta(\mathbbm{D}_n)$ be a $L^\infty$ $\delta$-covering of $\mc{G}^\prime_n|_{\rvs_{n}}$ with minimal size. That is to say, given $\mathbbm{D}_n$, $d\in\brac{\mc{D}}$ and $s\in\mc{S}$, for any $u^d_\cdot(s,\cdot)\in\mc{G}_n^\prime$, there exists $\tilde{u}^d_\cdot(s,\cdot)\in\mc{H}^{d,s}_\delta(\mathbbm{D}_n)$ such that for any $i\in\brac{n}$
\begin{align*}
    \abs{u^d_{\rvt_i}(s,X_i(\rvt_i))-\tilde{u}^d_{\rvt_i}(s,X_i(\rvt_i))}\leq \delta,
\end{align*} 
which implies that
\begin{align}\label{proof:g_lipschitz}
    \abs{\mc{J}(u,d,s,Z_i)-\mc{J}(\tilde{u},d,s,Z_i)}\leq(\frac{M_\tau}{m}+1)\delta\overset{\triangle}{=}K_2\delta,
\end{align} 
and
\begin{align}\label{proof:g_lipschitz}
    \abs{\mc{J}^2(u,d,s,Z_i)-\mc{J}^2(\tilde{u},d,s,Z_i)}\leq 2K_1K_2\delta\overset{\triangle}{=}\eta_3\delta.
\end{align} 
Define
\begin{align*}
\mc{H}_\delta(\mathbbm{D}_n)=\setBig{\set{u_\cdot^d(s,\cdot)}_{d\in\mc{D},s\in\mc{S}}:\mc{S}^\mc{D}\times[0,1]\to\R^{\mc{D}\times\abs{\mc{S}}}: \abs{u_t^d(z^d,x)}\ge m ,u_\cdot^d(s,\cdot)\in\mc{H}^{d,s}_\delta(\mathbbm{D}_n)}.
\end{align*}

Thus, taking $\delta_1=\frac{t}{7\eta_3\mc{D}\abs{\mc{S}}}$, by {union bound}, we have
{\begin{align*}
    &~\P\parenBig{\mc{B}_2}\\
    \leq&~\frac{2}{1-\eta_2}\P\Big(\exists (u,d,s)\in\mc{G}_n\times\brac{\mc{D}}\times\mc{S},\dn\sumn\eps_i\setBig{\mc{J}(u,d,s,Z_i)}^2>\frac{3}{11}\bracBig{\frac{t}{\mc{D}\abs{\mc{S}}}+\frac{1}{n}\sumn\setBig{\mc{J}(u,d,s,Z_i)}^2}\Big)\\
    \leq&~\frac{2}{1-\eta_2}\P\Big(\exists  (u,d,s)\in\mc{H}_{\delta_1}(\mathbbm{D}_n)\times\brac{\mc{D}}\times\mc{S},\eta_3\delta_1+\dn\sumn\eps_i\setBig{\mc{J}(u,d,s,Z_i)}^2\\
    &~\qquad>\frac{3}{11}\bracBig{\frac{t}{\mc{D}\abs{\mc{S}}}+\frac{1}{n}\sumn\setBig{\mc{J}(u,d,s,Z_i)}^2-\eta_3\delta_1}\Big)\\
    =&~\frac{2}{1-\eta_2}\P\Big(\exists (u,d,s)\in\mc{H}_{\delta_1}(\mathbbm{D}_n)\times\brac{\mc{D}}\times\mc{S},\dn\sumn\eps_i\setBig{\mc{J}(u,d,s,Z_i)}^2\\
    &~\qquad>\frac{3t}{11\mc{D}\abs{\mc{S}}}+\frac{3}{11n}\sumn\setBig{\mc{J}(u,d,s,Z_i)}^2-\frac{14}{11}\eta_3\delta_1\Big)\\
    =&~\frac{2}{1-\eta_2}\P\Big(\exists (u,d,s)\in\mc{H}_{\delta_1}(\mathbbm{D}_n)\times\brac{\mc{D}}\times\mc{S},\dn\sumn\eps_i\setBig{\mc{J}(u,d,s,Z_i)}^2>\frac{t}{11\mc{D}\abs{\mc{S}}}+\frac{3}{11n}\sumn\setBig{\mc{J}(u,d,s,Z_i)}^2\Big).
\end{align*}}
Conditioning on $\mathbbm{D}_n$, by Hoeffding's inequality, we have
\begin{align*}
    &~\E\Big\{\P\Big(\exists (u,d,s)\in\mc{H}_{\delta_1}(\mathbbm{D}_n)\times\brac{\mc{D}}\times\mc{S},\dn\sumn\eps_i\setBig{\mc{J}(u,d,s,Z_i)}^2>\frac{t}{11\mc{D}\abs{\mc{S}}}+\frac{3}{11n}\sumn\setBig{\mc{J}(u,d,s,Z_i)}^2\Big|\mathbbm{D}_n\Big)\Big\}\\
    \leq&~{\E\Big\{2\abs{\mc{S}}\mc{D}\mc{N}_{n}({\delta_1},\mc{G}^\prime_n|_{\rvs_{n}},L^\infty)]}\times\max_{u\in\mc{H}_{\delta_1}(\mathbbm{D}_n)}\exp\parenBig{-\frac{cn\parenBig{\frac{t}{11\mc{D}\abs{\mc{S}}}+\frac{3}{11n}\sumn\setBig{\mc{J}(u,d,s,Z_i)}^2}^2}{\dn\sumn\setBig{\mc{J}(u,d,s,Z_i)}^4}}\Big\}\\
    \leq&~{2\mc{D}\abs{\mc{S}}\mc{N}_{n}(\frac{t}{7\eta_3\mc{D}\abs{\mc{S}}},\mc{G}_n^\prime,L^\infty)}\exp(-\frac{cnt}{K_1^2\abs{\mc{S}}\mc{D}}).
\end{align*}
Consequently, we have 
\begin{align}\label{proof:B2}
    \P(\mc{B}_2)\leq\parenBig{\frac{4\mc{D}\abs{\mc{S}}}{1-\eta_2}}\mc{N}_{n}(\frac{t}{7\eta_3\mc{D}},\mc{G}_n^\prime,L^\infty)\exp(-\frac{cnt}{K_1^2\abs{\mc{S}}\mc{D}}).
\end{align}

\noindent \underline{\bf Step 3. Bounding $\P(\mc{B}_3)$.}
For $F(x)=x\log x$, if $a,b\in[c,C]$, then we have $D_F(a||b)\ge\frac{1}{2C}(a-b)^2$. Thus, for any $(u,d)\in\mc{G}_n\times\brac{D}$, by \eqref{proof:g_lipschitz} and QM-AM inequality, we have
\begin{equation}\label{proof:alpha}
\begin{aligned}
    \E\bracBig{\mc{J}(u,d,s,Z)}=&~\E\bracBig{(1-\delta_{X(\rvt)^d}(s))D_{F}(u^{0,d}_\rvt(s,X(\rvt))||u^d_\rvt(s,X(\rvt)))}\\
    \ge&~\frac{1}{M_\tau}\E\bracBig{(1-\delta_{X(\rvt)^d}(s))(u^{0,d}_\rvt(s,X(\rvt))-u^d_\rvt(s,X(\rvt)))^2}\\
    \ge&~\frac{1}{M_\tau K_2^2}\E\bracBig{(1-\delta_{X(\rvt)^d}(s))D^2_{F}(u^{0,d}_\rvt(s,X(\rvt))||u^d_\rvt(s,X(\rvt)))}\\
    \ge&~\frac{1}{M_\tau K_2^2}\E\bracBig{\mc{J}^2(u,d,s,Z)}\\
    \ge&~C\alpha\E\bracBig{\mc{J}^2(u,d,s,Z)}^2,
\end{aligned}
\end{equation}
where $\alpha=(M_\tau K_2^2)^{-1}\wedge(1/2)$. Then, {by the definition of $\mc{B}_3$, \eqref{proof:alpha} and introducing random signs}, we have
{\begin{align*}
    \P(\mc{B}_3)\leq&~\P\Big(\exists (u,d,s)\in\mc{G}_n\times\brac{\mc{D}}\times\mc{S}:\frac{1}{n}\sumn\mc{J}(u,d,s,Z_i^\prime)-\frac{1}{n}\sumn\mc{J}(u,d,s,Z_i)\\
    &~\qquad>\parenBig{\frac{1}{2}-\frac{C\alpha}{12}}\frac{t}{\mc{D}\abs{\mc{S}}}+\frac{C\alpha}{24}\parenBig{\dn\sumn\setBig{\mc{J}(u,d,s,Z_i^\prime)}^2+\dn\sumn\setBig{\mc{J}(u,d,s,Z_i)}^2}\Big)\\
    \leq&~2\P\Big(\exists (u,d,s)\in\mc{G}_n\times\brac{\mc{D}}\times\mc{S}:\frac{1}{n}\sumn\eps_i\mc{J}(u,d,s,Z_i)>\parenBig{\frac{1}{4}-\frac{C\alpha}{24}}\frac{t}{\mc{D}\abs{\mc{S}}}+\frac{C\alpha}{24n}\sumn\setBig{\mc{J}(u,d,s,Z_i)}^2\Big),
\end{align*}}
where $\set{\eps_i}_{i=1}^n$ is a sequence of i.i.d. Rademacher random variables, independent of $\mathbbm{D}_n$. Note that, given $\mathbbm{D}_n$, for any $u\in\mc{G}_n$, there exists $\tilde{u}\in\mc{H}_\delta(\mathbbm{D}_n)$ such that for any $i\in\brac{n}$,
\begin{align*}
 \absBig{\mc{J}(u,d,s,Z_i)-\mc{J}(\tilde{u},d,s,Z_i)}\leq K_2\delta\overset{\triangle}{=}\eta_4\delta.
\end{align*}
Thus, if $\delta_2= \frac{t}{(16\eta_4+2\eta_3)\mc{D}\abs{\mc{S}}}$, conditioning on $\mathbbm{D}_n$, by {union bound} and Bernstein's inequality \citep[e.g., Lemma A.2 of][]{gyorfi2002distribution}, we have
{\begin{align*}
    &~2\P\Big(\exists (u,d,s)\in\mc{G}_n\times\brac{\mc{D}}\times\mc{S}:\frac{1}{n}\sumn\eps_i\mc{J}(u,d,s,Z_i)>\parenBig{\frac{1}{4}-\frac{C\alpha}{24}}\frac{t}{\mc{D}\abs{\mc{S}}}+\frac{C\alpha}{24n}\sumn\setBig{\mc{J}(u,d,s,Z_i)}^2\Big)\\
    =&~2\E\Big\{\P\Big(\exists  (u,d,s)\in\mc{G}_n\times\brac{\mc{D}}\times\mc{S}:\frac{1}{n}\sumn\eps_i\mc{J}(u,d,s,Z_i)>\parenBig{\frac{1}{4}-\frac{C\alpha}{24}}\frac{t}{\mc{D}}+\frac{C\alpha}{24n}\sumn\setBig{\mc{J}(u,d,s,Z_i)}^2\Big|\mathbbm{D}_n\Big)\Big\}\\
    \leq&~{2\E\Big\{\P\Big(\exists  (u,d,s)\in\mc{H}_\delta(\mathbbm{D}_n)\times\brac{\mc{D}}\times\mc{S}}:\eta_4\delta_2+\frac{1}{n}\sumn\eps_i\mc{J}(u,d,s,Z_i)\\
    &~\qquad>\parenBig{\frac{1}{4}-\frac{C\alpha}{24}}\frac{t}{\mc{D}\abs{\mc{S}}}+\frac{C\alpha}{24n}\sumn\setBig{\mc{J}(u,d,s,Z_i)}^2-\frac{C\alpha\eta_3\delta_2}{24}\Big|\mathbbm{D}_n\Big)\Big\}\\
    \leq&~{2\E\Big\{\mc{D}\abs{\mc{S}}\mc{N}_{n}(\delta_2,\mc{G}_n^\prime|_{\rvs_n},L^\infty)\max_{u\in\mc{H}_\delta(\mathbbm{D}_n)}}\P\Big(\frac{1}{n}\sumn\eps_i\mc{J}(u,d,s,Z_i)\\
    &~\qquad>\parenBig{\frac{3}{16}-\frac{C\alpha}{24}}\frac{t}{\mc{D}\abs{\mc{S}}}+\frac{C\alpha}{24n}\sumn\setBig{\mc{J}(u,d,s,Z_i)}^2\Big|\mathbbm{D}_n\Big)\Big\}\\
    \leq&~{4\E\Big\{\mc{D}\abs{\mc{S}}\mc{N}_{n}(\delta_2,\mc{G}_n^\prime|_{\rvs_n},L^\infty)\max_{u\in\mc{H}_\delta(\mathbbm{D}_n)}}\exp\parenBig{-\frac{cn\bracBig{\parenBig{\frac{3}{16}-\frac{C\alpha}{24}}\frac{t}{\mc{D}\abs{\mc{S}}}+\frac{C\alpha}{24n}\sumn\setBig{\mc{J}(u,d,s,Z_i)}^2}^2}{\frac{2}{n}\sumn\setBig{\mc{J}(u,d,s,Z_i)}^2+\frac{2t}{\mc{D}\abs{\mc{S}}}}}\Big\}\\
    \leq&~4\mc{D}\abs{\mc{S}}{\mc{N}_{n}(\frac{t}{(16\eta_4+2\eta_3)\mc{D}\abs{\mc{S}}},\mc{G}_n^\prime,L^\infty)}\exp\parenBig{-\frac{c\alpha nt}{\mc{D}\abs{\mc{S}}}},
\end{align*}}
where the last inequality we use the inequality
\begin{align*}
    \frac{(a+u)^2}{a+bu}\ge\frac{4a}{b^2}\brac{(b-1)\vee 0}\text{  for any  }a,b,u>0.
\end{align*}
Consequently, we have
\begin{align}\label{proof:B3}
    \P(\mc{B}_3)\leq4\mc{D}\abs{\mc{S}}{\mc{N}_{\abs{\mc{S}}n}(\frac{t}{(16\eta_4+2\eta_3)\mc{D}\abs{\mc{S}}},\mc{G}_n^\prime,L^\infty)}\exp\parenBig{-\frac{c\alpha nt}{\mc{D}}}.
\end{align}

\noindent\underline{\bf Step 4. Deriving the final bound.}
Combining \Eqref{proof:B2} and \Eqref{proof:B3} derived in previous parts, if $t\ge CK_1^2\abs{\mc{S}}\mc{D}/n$, we have
\begin{align*}
    &~\P\parenBig{\exists(u,d,s)\in\mc{G}_n\times\brac{\mc{D}}\times\mc{S},\E_Z\brac{\mc{J}(u,d,s,Z)}-\frac{2}{n}\sumn\mc{J}(u,d,s,Z_i)>\frac{t}{\mc{D}\abs{\mc{S}}}}\\
    \leq&~\frac{1}{1-\eta_1}(2\P(\mc{B}_2)+\P(\mc{B}_3))\\
    \leq&~C\mc{D}{\mc{N}_{n}(\frac{t}{CK_1^2\abs{\mc{S}}\mc{D}},\mc{G}_n^\prime,L^\infty)}\exp(-\frac{cnt}{K_1^2\abs{\mc{S}}^2\mc{D}}).
\end{align*}
Thus, by taking $a_n=\frac{CK_1^2\abs{\mc{S}}\mc{D}\brac{\log{\mc{N}_{n}(1/(2n),\mc{G}_n^\prime,L^\infty)}+\log(\mc{D}\abs{\mc{S}})}}{n}$, we can obtain that
\begin{align*}
    &~\E_{\mathbbm{D}_n}\brac{\mc{L}(\hat{u})+\mc{L}(u^0)-2\mc{L}_n(\hat{u})}\\
    \leq&~a_n+\int_{a_n}^\infty\P\parenBig{\exists(u,d,s)\in\mc{G}_n\times\brac{\mc{D}}\times\mc{S},\E_Z\brac{\mc{J}(u,d,s,Z)}-\frac{2}{n}\sumn\mc{J}(u,d,s,Z_i)>\frac{t}{\mc{D}\abs{\mc{S}}}}\dd t\\
    \leq&~a_n+\int_{a_n}^\infty\exp\parenBig{-\frac{cnt}{K_1^2\abs{\mc{S}}\mc{D}}+\log{\mc{N}_{n}(1/(2n),\mc{G}_n^\prime,L^\infty)}+\log (\mc{D}\abs{\mc{S}})}\dd t\\
    \leq&~a_n+\frac{CK_1^2\abs{\mc{S}}\mc{D}}{n}\\
    \leq&~\frac{CK_1^2\abs{\mc{S}}\mc{D}\brac{\log{\mc{N}_{n}(1/(2n),\mc{G}_n^\prime,L^\infty)}+\log(\mc{D}\abs{\mc{S}})}}{n},
\end{align*}
which completes the proof by noting that $K_1=(1+\log\frac{M_\tau}{m})M_\tau$.

\end{proof}

\subsection{Proof of \cref{thm:overall error bound 2}}
\begin{proof}
We first establish the approximation error under the boundedness condition (\cref{con:boundedness}), then we use the hyperparameters in the proof of \cref{thm:overall error bound 1} to derive the distribution error bound.

\noindent\underline{\bf Step 1. Bounding approximation error.} By \cref{con:boundedness}, we can derive a new bound for the LHS of \eqref{eq:approximation error with relu}

\begin{equation}\label{eq:approximation error with relu-2}
\begin{aligned}
&~\inf_{u\in\mc{G}_n}\E\bracBig{\sum_{z\neq X(\rvt)}D_F\parenBig{u^0_\rvt(z,X(\rvt))||u_\rvt(z,X(\rvt))}}\\
    \leq&~m^{-1}\E\setBig{\sum_{d\in\brac{\mc{D}},s\in\mc{S}}\absBig{f^{***}(d,s,\rvt,X(\rvt))-f^{0,d,s}(\rvt,X(\rvt))}^2}\\
    \leq&~C\mc{D}\abs{\mc{S}}m^{-1}\abs{\mc{S}}^{2\beta}B_0^2(\lfloor\beta\rfloor+1)^4(\mc{D}+1)^{2\lfloor\beta\rfloor+(\beta\vee1)}(S_1S_2)^{-4\beta/(\mc{D}+1)},
\end{aligned}
\end{equation}
where $\mc{G}_n$ is the class of matrix-valued ReLU networks with range $[m_n,M_\tau]^{\mc{D}\times\abs{\mc{S}}}$, depth $\mathrm{D}^*=21(\lfloor\beta\rfloor+1)^2S_1\lceil\log_2(8S_1)\rceil+3$, width $\mathrm{W}^*=38(\lfloor \beta\rfloor+1)^2(\mc{D}+1)^{\lfloor \beta\rfloor+1}S_2\lceil\log_2(8S_2)\rceil$, $S_1,S_2\in\N^+$. 

\noindent\underline{\bf Step 2. Bounding estimation error.} Given the vocabulary size $\abs{\mc{S}}$, by \cref{thm:stochastic error 2}, \cref{lemma:pseudo dimension} and \cref{lemma:ReLu}, the stochastic error is bounded by
\begin{align*}
\E_{\mathbbm{D}_n}\brac{\mc{L}(\hat{u})+\mc{L}(u^0)-2\mc{L}_n(\hat{u})}\leq C(\log n\tau^{-1})^2\abs{\mc{S}}\mc{D}\frac{\mathrm{S}^*\mathrm{D}^*\log\mathrm{S}^*}{n\tau^2},
\end{align*}
where $\mathrm{S}^*$ is the number of parameters of the ReLU networks in $\mc{G}_n^\prime$. Note that for a ReLU network with depth $\mathrm{D}^*$ and width $\mathrm{W}^*$ and input dimension $\mc{D}+1$, we have (assume that $\mc{D}\lesssim\mathrm{W}^*\mathrm{D}^*$)
\begin{align*}
    \mathrm{S}^*\leq\underbrace{\mathrm{W}^*(\mc{D}+1)+\mathrm{W}^*}_{\text{input layer}}+\underbrace{((\mathrm{W}^*)^2+\mathrm{W}^*)(\mathrm{D}^*-1)}_{\text{hidden layer}}+\underbrace{\mathrm{W}^*+1}_{\text{output layer}}=O((\mathrm{W}^*)^2\mathrm{D}^*).
\end{align*}
Therefore, choosing $S_1=C(n\tau^2)^\frac{\mc{D}+1}{(2\mc{D}+4\beta+2)}$ and $S_2=C$, we have
\begin{align*}
    \mathrm{W}^*=C(\mc{D}+1)^{\lfloor\beta\rfloor+1};~\mathrm{D}^*=C(n\tau^2)^\frac{\mc{D}+1}{(2\mc{D}+4\beta+2)}\log (n\tau^2);~ \mathrm{S}^*=C\mc{D}^{2\lfloor\beta\rfloor+2}(n\tau^2)^\frac{\mc{D}+1}{(2\mc{D}+4\beta+2)}\log (n\tau^2),
\end{align*}
yielding that
\begin{align*}
\E_{\mathbbm{D}_n}\brac{\mc{L}(\hat{u})+\mc{L}(u^0)-2\mc{L}_n(\hat{u})}\leq C\abs{\mc{S}}\mc{D}^{2\lfloor\beta\rfloor+3}(n\tau^2)^{-\frac{2\beta}{(2\beta+\mc{D}+1)}}\log (n\tau^2)\log(n\tau^{-1}),
\end{align*}
if $n\ge \mc{D}$.

By \eqref{eq:approximation error with relu-2}, the approximation error is
\begin{align*}
    &~\inf_{u\in\mc{G}_n}\E\bracBig{D_F\parenBig{\sum_{z\neq X(\rvt)}u^0(\rvt,z,X(\rvt))||\sum_{z\neq X(\rvt)}u(\rvt,z,X(\rvt))}} \leq C\abs{\mc{S}}^{2\beta+1}\mc{D}^{2\lfloor\beta\rfloor+3}\tau^{-2}(n\tau^2)^{-\frac{2\beta}{(2\beta+\mc{D}+1)}}
\end{align*}
Then the summation of the approximation error and the stochastic error has the convergence rate (up to some logarithmic multiplier)
\begin{align}\label{eq:approximation error with sample size}
   \E_{\mathbbm{D}_n}\brac{D_{KL}(p_{1-\tau}||\hat{p}_{1-\tau})}\leq C\abs{\mc{S}}^{2\beta+1}\mc{D}^{2\lfloor\beta\rfloor+3}\tau^{-2}(n\tau^2)^{-\frac{2\beta}{(2\beta+\mc{D}+1)}}.
\end{align}

\noindent\underline{\bf Step 3. Deriving final results.} By triangle inequality, Pinsker's inequality and \cref{thm: stochastic error 1,thm:approximation error}, for sufficiently large $n\tau^2$, we have
    \begin{align*}
        \E_{\mathbbm{D}_n}\bracBig{\text{TV}(p_1,\hat{p}_{1-\tau})} \leq&~ \E_{\mathbbm{D}_n}\bracBig{\text{TV}(p_{1-\tau},\hat{p}_{1-\tau})}+\text{TV}(p_1,p_{1-\tau})\\
        \leq&~\E_{\mathbbm{D}_n}\brac{D_{KL}(p_{1-\tau}||\hat{p}_{1-\tau})}+\text{TV}(p_1,p_{1-\tau})\\
        \leq&~C\abs{\mc{S}}^{\beta+1}\mc{D}^{\lfloor\beta\rfloor+\frac{3}{2}}\tau^{-1}(n\tau^2)^{-\frac{\beta}{(2\beta+\mc{D}+1)}}+C\tau\mc{D}
    \end{align*}
which completes the proof.

\end{proof}

\section{Implementation Details and Additional Experiments}\label{sim: additional detail and simulation}
\subsection{Implementation Details}\label{sim: implementation details}
\textbf{Data.} We consider the data distribution with a blockwise AR(1) structure; that is, we first sample dimension $d=1$ from $X(1)^1\sim \mc{U}(\mc{S})$ and then we sample dimension $d=2,3$ from $$X(1)^d|X(1)^{d-1}\sim\begin{cases}
    0.9~\mc{U}(X(1)^{d-1}+\set{-2,-1,\dots,2})+0.1~\mc{U}(\mc{S}),&\text{if }X(1)^{d-1}\in[3,\abs{\mc{S}}-2]\\
    \mc{U}(\mc{S}),&\text{otherwise}
\end{cases}, $$ and finally we sample $(X(1)^{3j-2},X(1)^{3j-1},X(1)^{3j})$ from distribution same as $(X(1)^{1},X(1)^{2},X(1)^{3})$ for $j=2,\dots,\mc{D}/3$ (if $\mc{D}>3$).

\noindent\textbf{Experimental Setup.} We consider different sample size, dimension and early stopping parameter in our experiments.
\begin{itemize}
    \item sample size: $n\in\set{2500, 5000, 7500, 10000, 12500}$;
    \item dimension: $\mc{D}\in\set{3, 6, 9, 12, 15}$;
    \item early stopping parameter: $\tau\in\set{0.001, 0.003, 0.005, 0.007, 0.01, 0.03, 0.05, 0.07, 0.1}$.
\end{itemize}

\noindent\textbf{Training and Evaluation.} In our experiments, we consider the linear time scheduler $\kappa_t=t$. Note that $u_t^d(z^d,x)=\frac{1}{1-t}(p_{1|t}^d(z^d|x)-\delta_{x^d}(z^d))$ by \eqref{eq:mixture path} and \eqref{eq:joint transition rate}. Thus, we can parameterize the posterior $p_{1|t}^{d,\theta}(z^d|x)$. By \eqref{eq:joint transition rate}, the estimated transition rate is $\hat{u}_t(z,x)=\sum_{d=1}^\mc{D}\frac{1}{1-t}\delta_{x^{\bsl d}}(z^{\bsl d})(p_{1|t}^{d,\theta}(z^d|x)-\delta_{x^d}(z^d))$, where the parameters can be obtained by minimizing the following empirical risk (equivalent to the empirical version of ELBO derived in Equation 37 of \cite{shaul2024flow}) based on data $\mathbbm{D}_n$, which is equivalent to \eqref{eq:training objective} (up to a constant not depending on $\theta$):
{\begin{align*}
    -\dn\sumn\sum_{d=1}^{\mc{D}}\frac{1}{1-\rvt_i}\setBig{\parenBig{1-\delta_{X_i(1)^d}(X_i(\rvt_i)^d)}\log p_{1|t}^{d,\theta}(X_i(1)^d|X_i(\rvt_i))-\delta_{X_i(1)^d}(X_i(\rvt_i)^d)+p_{1|t}^{d,\theta}(X_i(\rvt_i)^d|X_i(\rvt_i))}.
\end{align*}}
Let $\set{(\rvt_i^\prime,X_i^\prime(\rvt_i^\prime),X_i^\prime(1))}_{i=1}^{n_{\ttest}}$ be a test dataset independent of $\mathbbm{D}_n$. To evaluate the estimation error of the estimated transition rate $\hat{u}$, we use the following empirical prediction risk
{\begin{align*}
    -\frac{1}{n_{\ttest}}\sum_{i=1}^{n_\ttest}\sum_{d=1}^{\mc{D}}\frac{1}{1-\rvt_i^\prime}\setBig{\parenBig{1-\delta_{X_i^\prime(1)^d}(X_i^\prime(\rvt_i^\prime)^d)}\log p_{1|t}^{d,\theta}(X_i^\prime(1)^d|X_i(\rvt_i^\prime))-\delta_{X_i(1)^d}(X_i^\prime(\rvt_i^\prime)^d)+p_{1|t}^{d,\theta}(X_i^\prime(\rvt_i^\prime)^d|X_i^\prime(\rvt_i^\prime))}.
\end{align*}}
We set $n_{\ttest}=100,000$ in our simulation.

\noindent\textbf{Models.} All our logit models use ReLU networks with 4 hidden layers with 256 dimensions. The optimizer is Adam with learning rate 1e-3. We train on the $\mc{D}$-dimensional dataset for $2000\mc{D}/3$ epochs with batch size 512.

\subsection{Overall Performance Evaluation}\label{sim: overall performance evaluation}
\textbf{Tau-leaping algorithm.} We present the tau-leaping algorithm (\cref{alg:sampling via tau-leaping}) described in \cite{campbell2022continuous}.

\begin{algorithm}[h!]
\caption{Sampling via Tau-leaping \citep[Algorithm 1 in][]{campbell2022continuous}}
\label{alg:sampling via tau-leaping}
\begin{algorithmic}[1]
\Require A learned transition rate $\hat{u}$, an early stopping parameter $\tau>0$, time partition $0=t_0<t_1<\cdots<t_N=1-\tau$.
\State Draw $Y_0\sim\mc{U}(\mc{S}^\mc{D})$.
\For{$k=0$ to $N-1$}
\For{$d=1$ to $\mc{D}$}
\For{$s\in\mc{S}\bsl Y_k^d$}
  \State Draw $P_{ds}\sim \text{Poisson}((t_{k+1}-t_k)\hat{u}_{t_k}^d(s,Y_k))$.
  \EndFor
  \EndFor
\For{$d=1$ to $\mc{D}$}
\If{$\sum_{s\in\mc{S}\bsl Y_k^d}P_{ds}>1$}
\State $Y_{k+1}^d=Y_k^d$
\Else
\State $Y_{k+1}^d=Y_k^d+\sum_{s\in\mc{S}\bsl Y_k^d}P_{ds}\times(s-Y_k^d)$
\EndIf
  \EndFor
\EndFor
\State \Return $Y_N\sim\hat{p}_{1-\tau}$
\end{algorithmic}
\end{algorithm}

\noindent\textbf{Implementation Details.} We train our models with sample size 100,000. To assess the performance of uniformization and tau-leaping sampling algorithms in practice, we calculate the total variation of the empirical joint distribution of the first 3 dimensions ($8^3=512$ states in total) between the samples from the true data distribution and the generated samples. We choose $N=100$ and the time partition $t_i=(1-\tau)\times i/N$ for both algorithms, and $\lambda_{k+1}=\mc{D}/(1-t_{k+1})$ for uniformization algorithm. For evaluation, we generate 500,000 samples from the true data distribution and 100,000 samples from discrete flow-based models using different sampling algorithms. We also record the runtime of each algorithm for sampling 100,000 samples.

\noindent\textbf{Overall Performance and Comparison between Uniformization and Tau-leaping.} The simulation results are presented in \cref{table:comparison algorithms}. From the simulation results, we can obtain the following conclusions.
\begin{itemize}
    \item As the early stopping parameter $\tau$ increases, the total variation decreases first and then increases. The minimum is achieved between $\tau=0.01$ and $\tau=0.07$.
    \item As $\tau$ decreases, the runtime of uniformization increases and that of tau-leaping is almost fixed. This is because in each time interval $[t_k,t_{k+1}]$, the number of function calls depends on $t_{k+1}$ for uniformization algorithm and is fixed for tau-leaping algorithm.
    \item The uniformization sampling algorithm performs well for moderately small $\tau$, and is sometimes worse than tau-leaping algorithm especially for extremely small $\tau$. One possible explanation is that the tau-leaping algorithm might reduce the large estimation error caused by extremely small $\tau$.
\end{itemize}
\begin{table}[h!]
\centering
\caption{Total variation (on the joint distribution of the first 3 dimensions) and runtime with uniformization and tau-leaping algorithms.}
\label{table:comparison algorithms}
\setlength{\tabcolsep}{8pt}
\renewcommand{\arraystretch}{1.15}
\begin{tabular}{c c cc cc}
\toprule
$\mathcal{D}$ & $\tau$ 
& \multicolumn{2}{c}{Total Variation} 
& \multicolumn{2}{c}{Runtime (s)} \\
\cmidrule(lr){3-4} \cmidrule(lr){5-6}
& & Uniformization & Tau-leaping & Uniformization & Tau-leaping \\
\midrule

\multirow{5}{*}{3}
& 0.01 & \textbf{0.0670} & 0.0679 & 31.3777 & 10.6464 \\
& 0.03 & \textbf{0.0551} & 0.0561 & 30.3401 & 10.6581 \\
& 0.05 & 0.0547 & \textbf{0.0534} & 29.1027 & 10.9488 \\
& 0.07 & \textbf{0.0590} & 0.0608 & 28.6259 & 10.6501 \\
& 0.1  & \textbf{0.0613} & 0.0684 & 27.7543 & 10.6748 \\
\midrule

\multirow{5}{*}{6}
& 0.01 & \textbf{0.0588} & 0.0612 & 44.1496 & 13.3098 \\
& 0.03 & \textbf{0.0488} & 0.0516 & 40.6444 & 13.6291 \\
& 0.05 & \textbf{0.0509} & 0.0522 & 40.3871 & 13.8540 \\
& 0.07 & \textbf{0.0594} & 0.0639 & 37.7639 & 13.4209 \\
& 0.1  & \textbf{0.0647} & 0.0683 & 37.7448 & 13.3129 \\
\midrule

\multirow{5}{*}{9}
& 0.01 & 0.0643 & \textbf{0.0628} & 56.9279 & 16.0244 \\
& 0.03 & \textbf{0.0653} & 0.0666 & 53.7150 & 16.4715 \\
& 0.05 & \textbf{0.0539} & 0.0581 & 56.2539 & 15.9569 \\
& 0.07 & \textbf{0.0581} & 0.0612 & 50.1394 & 17.3870 \\
& 0.1  & \textbf{0.0689} & 0.0719 & 50.7919 & 16.6256 \\
\midrule

\multirow{5}{*}{12}
& 0.01 & 0.1061 & \textbf{0.0980} & 73.9635 & 18.8918 \\
& 0.03 & \textbf{0.0584} & 0.0607 & 64.0545 & 19.3838 \\
& 0.05 & \textbf{0.0610} & 0.0640 & 64.8737 & 18.3714 \\
& 0.07 & \textbf{0.0647} & 0.0672 & 62.9912 & 18.3160 \\
& 0.1  & \textbf{0.0680} & 0.0723 & 57.3696 & 19.2553 \\
\midrule

\multirow{5}{*}{15}
& 0.01 & \textbf{0.0816} & \textbf{0.0816} & 82.3939 & 21.9227 \\
& 0.03 & 0.0627 & \textbf{0.0621} & 73.7929 & 21.7484 \\
& 0.05 & \textbf{0.0718} & 0.0729 & 70.4029 & 20.3761 \\
& 0.07 & \textbf{0.0757} & 0.0775 & 68.3231 & 20.5265 \\
& 0.1  & \textbf{0.0784} & 0.0803 & 65.7229 & 21.0171 \\
\bottomrule
\end{tabular}
\end{table}

\end{document}